\newtheorem{thm}{Theorem}[section]
\newtheorem{prop}[thm]{Proposition}
\newtheorem{lem}[thm]{Lemma}
\newcommand{\Rn}{\mathbb{R}^n}
\newcommand{\ep}{\varepsilon}
\newcommand\va{\phi}
\newcommand\R{\mathbb{R}}
\newcommand{\mathH}{\mathcal{H}}
\numberwithin{equation}{section}
\numberwithin{thm}{section}
\newcommand{\vertiii}[1]{{\left\vert\kern-0.25ex\left\vert\kern-0.25ex\left\vert #1 
    \right\vert\kern-0.25ex\right\vert\kern-0.25ex\right\vert}}
\title[Bilinear estimates in Besov spaces]{
Bilinear estimates in Besov spaces generated by the Dirichlet Laplacian}
\author[T. Iwabuchi, T. Matsuyama and 
K. Taniguchi]{Tsukasa Iwabuchi, Tokio Matsuyama 
and Koichi Taniguchi}
\address{ 
Tsukasa Iwabuchi \endgraf 
Mathematical Institute \endgraf
Tohoku University \endgraf 
Aoba \endgraf 
Sendai 980-8578 \endgraf
Japan  
}
\email{t-iwabuchi@m.tohoku.ac.jp}
\address{ 
Tokio Matsuyama \endgraf 
Department of Mathematics \endgraf 
Chuo University \endgraf 
1-13-27, Kasuga, Bunkyo-ku \endgraf 
Tokyo 112-8551 \endgraf 
Japan}
\email{tokio@math.chuo-u.ac.jp} 
\address{ 
Koichi Taniguchi \endgraf 
Department of Mathematics \endgraf 
Chuo University \endgraf 
1-13-27, Kasuga, Bunkyo-ku \endgraf 
Tokyo 112-8551 \endgraf 
Japan} 
\email{koichi-t@gug.math.chuo-u.ac.jp} 
\thanks{
 The first author was supported by 
 Grant-in-Aid for Young
 Scientists Research (A) (No. 17H04824), 
 Japan Society for the Promotion of Science.
 The second author was supported by 
Grant-in-Aid for Scientific 
Research (C) (No. 18K03377), 
Japan Society for the Promotion of Science. 
}
\keywords{Bilinear estimates, Besov spaces, the Dirichlet Laplacian, gradient estimates}
\begin{document}


\footnote[0]
{2010 {\it Mathematics Subject Classification.} 
Primary 30H25; Secondary 42B35.}

\begin{abstract}
The purpose of this paper is to establish 
bilinear estimates in Besov spaces 
generated by the Dirichlet Laplacian on 
a domain of Euclidian spaces. 
These estimates are proved 
by using the gradient estimates for heat semigroup 
together with the Bony paraproduct formula and  
the boundedness of spectral multipliers. 
\end{abstract}

\maketitle


\section{Introduction}
The bilinear estimates in Sobolev spaces or Besov spaces 
are of great importance 
to study the well-posedness for the 
Cauchy problem to nonlinear partial 
differential equations. 
In this paper we study 
the bilinear estimates in Besov spaces:
\begin{equation}\label{EQ:C-biliear}
\|fg\|_{\dot{B}^{s}_{p,q}} 
\le 
C\left(
\|f\|_{\dot{B}^{s}_{p_1,q}}
\|g\|_{L^{p_2}}
+
\|f\|_{L^{p_3}}
\|g\|_{\dot{B}^{s}_{p_4,q}}
\right),
\end{equation}
where $s>0$ and 
$p$, $p_1,p_2,p_3,p_4$ and 
$q$ satisfy
$$
1\le p, p_1,p_2,p_3,p_4, q\le \infty 
\quad \text{and}
\quad \frac{1}{p}=\frac{1}{p_1}+\frac{1}{p_2}=\frac{1}{p_3}+\frac{1}{p_4}.
$$
We study also the inhomogeneous version 
of \eqref{EQ:C-biliear}. 
\\

The basis of proving the bilinear estimates in Sobolev spaces 
$W^{k,p}$ $(k = 1,2,\ldots)$ is to use the Leibniz rule and the H\"older inequality. 
However, when one considers
 the fractional order regularity,  
some idea would be needed. 
If the domain is the whole space $\mathbb R^n$, 
the Fourier transformation is one of the most powerful tools,   
and allows one to introduce the derivative of fractional order. 
It enables us to prove the bilinear estimates 
by using frequency decomposition called the Bony paraproduct 
formula (see Bony \cite{Bo-1981}) and the boundedness of Fourier multipliers. 
On the other hand, when the domain is 
different from $\mathbb R^n$, 
one cannot rely on such a kind of method.
It will be revealed that the bilinear estimates hold for 
small regularity number in the Besov spaces generated by the 
Dirichlet Laplacian,  
of which we established several properties 
on open sets in $\R^n$ (see \cite{IMT-Besov}),
and that there arises a problem for large regularity 
essentially.
The purpose of this paper is to establish the bilinear estimates 
in those Besov spaces.  \\

In the rest of this section we give a 
definition of Besov spaces generated by the Dirichlet Laplacian 
on an open set along \cite{IMT-Besov}. 
Let $\Omega$ be an open set of $\mathbb R^n$ with $n\ge1$. 
We denote by $\mathH$ the self-adjoint realization of the Dirichlet Laplacian $-\Delta$ with the domain 
\begin{equation*}\label{EQ:D-Laplacian}
\mathcal D (\mathH) = \left\{ f \in H^1_0 (\Omega) \,\big|\, \mathH f \in L^2(\Omega) \right\} 
\end{equation*}
such that 
\[
\left(\mathH f,g\right)_{L^2(\Omega)}
=\int_\Omega \nabla f(x)\cdot \overline{\nabla g(x)}\, dx 
\]
for any $f \in \mathcal D(\mathH)$ and $g\in H^1_0(\Omega)$, 
where $(\cdot,\cdot)_{L^2(\Omega)}$ stands for the inner product of $L^2(\Omega)$, and 
$H^1_0 (\Omega)$ is the completion of $C^\infty _0 (\Omega)$ 
with respect to $H^1(\Omega)$-norm. 
The operator $\mathH$ is a non-negative self-adjoint operator on $L^2(\Omega)$. 
For a Borel measurable function $\phi$ on $\mathbb R$, 
an operator $\phi(\mathH)$ is defined by letting
\[
\phi(\mathH) = \int^{\infty}_{0}%
\phi (\lambda) \, d E_{\mathH}(\lambda)  
\]
with the domain 
\[
\mathcal D ( \phi(\mathH)) =\left\{ f \in L^2 (\Omega) \, \bigg| \, 
\int^\infty_{0}
|\phi (\lambda)|^2 d \| E_{\mathH} (\lambda) f \|_{L^2 (\Omega)} ^2 < \infty
\right\},
\]
where $\{ E_{\mathH}(\lambda) \}_{\lambda \in \mathbb{R}}$ is 
the spectral resolution of the identity for $\mathH$. 

\vspace{0.5cm}

We begin by introducing the spaces of test functions on $\Omega$ and 
their duals,
which provide the basis for the study of our Besov spaces.
For this purpose, let us introduce 
the Littlewood-Paley 
partition of unity.  
Let $\phi_0$  
be a non-negative and smooth function on $\mathbb R$ such that  
\begin{equation*}
\label{EQ:phi1}
{\rm supp \, } \phi _0
\subset \{ \, \lambda \in \mathbb R \,\big|\,
2^{-1} \leq \lambda \leq 2 \, \}
\quad \text{and}\quad
\sum _{ j =-\infty}^\infty \phi_0 ( 2^{-j}\lambda) 
 = 1 
 \quad \text{for } \lambda > 0,  
\end{equation*}
and $ \{ \phi_j \}_{j=-\infty}^\infty$ is 
defined by letting 
\begin{equation*} \label{EQ:phi2}
\phi_j (\lambda) := \phi_0 (2^{-j} \lambda) 
 \quad \text{for }  \lambda \in \mathbb R . 
\end{equation*}

\vspace{0.5cm}

\noindent 
{\bf Definition (Spaces of test functions and 
distributions on $\Omega$).} 
\begin{enumerate}
\item[(i)] {\rm (}Linear topological spaces
$\mathcal X (\Omega)$ and $\mathcal X^\prime (\Omega)${\rm ).}
A linear topological space 
$\mathcal X (\Omega)$ is 
defined by letting
\begin{equation}\notag 
\mathcal X (\Omega) 
:= \left\{ f \in  L^1 (\Omega) \cap \mathcal D (\mathH) 
 \, \Big| \, 
    \mathH^{M} f \in L^1(\Omega ) \cap \mathcal D (\mathH) \text{ for any } M \in \mathbb N 
   \right\} 
\end{equation} 
equipped with the family of semi-norms $\{ p_{M}
 (\cdot) \}_{ M = 1 } ^\infty$ 
given by 
\begin{equation*}
\label{EQ:p_M}
p_{M}(f) := 
\| f \|_{ L^1(\Omega)} 
+ \sup _{j \in \mathbb N} 2^{Mj} 
  \| \phi_j (\sqrt{\mathH}) f \|_{ L^1(\Omega)} . 
\end{equation*}
$\mathcal X'(\Omega)$ denotes the topological dual of 
$\mathcal X (\Omega)$.

\item[(ii)] {\rm (}Linear topological spaces
$\mathcal Z (\Omega)$ and $\mathcal Z^\prime (\Omega)${\rm ).} 
A linear topological space 
$\mathcal Z (\Omega)$ is 
defined by letting
\begin{equation}\notag 
\mathcal Z (\Omega) 
:= \left\{ f \in \mathcal X (\Omega) 
 \, \Big| \, 
  \sup_{j \leq 0} 2^{ M |j|} 
    \big\| \phi_j \big(\sqrt{\mathH} \big ) f \big\|_{L^1(\Omega)} < \infty 
  \text{ for any } M \in \mathbb N
   \right\} 
\end{equation}
equipped 
with the family of semi-norms $\{ q_{M} (\cdot) \}_{ M = 1}^\infty$ 
given by 
\begin{equation*}\label{EQ:qV} 
q_{M}(f) := 
\| f \|_{L^1 (\Omega) }
+ \sup_{j \in \mathbb Z} 2^{M|j|} \| \phi_j (\sqrt{\mathH}) f \|_{L^1(\Omega)}. 
\end{equation*}
$\mathcal Z'(\Omega)$ denotes the topological dual of $\mathcal Z (\Omega)$.
\end{enumerate}

\vspace{5mm}

In this paper we often use the notation 
${}_{X'}\langle \cdot, \cdot \rangle_{X}$
of duality pair of a linear topological space 
$X$ and its dual $X'$.\\

Let us give a few remarks on the spaces $\mathcal X (\Omega)$, $\mathcal Z (\Omega)$ and their dual spaces.
The spaces $\mathcal X (\Omega)$ and $\mathcal Z (\Omega)$ are non-empty, since
\[
\phi(\mathH)f \in \mathcal Z(\Omega) \subset \mathcal X(\Omega)\quad \text{for any $f \in L^1(\Omega)\cap L^2(\Omega)$ and $\phi \in C^\infty_0((0,\infty))$}
\]
by Lemma \ref{lem:Lp} below.
It is proved in Lemma 4.2 from \cite{IMT-Besov} that $\mathcal X (\Omega)$ and $\mathcal Z (\Omega)$ are complete, 
and
in Lemma 4.6 from \cite{IMT-Besov} 
that 
\begin{equation}
\label{EQ:inc1}
\mathcal X(\Omega) \hookrightarrow L^p(\Omega) \hookrightarrow \mathcal X'(\Omega),
\end{equation}
\begin{equation}
\label{EQ:inc2}
\mathcal Z(\Omega) \hookrightarrow L^p(\Omega) \hookrightarrow \mathcal Z'(\Omega)
\end{equation}
for any $1\le p\le \infty$. 
The inclusion relation \eqref{EQ:inc1} (\eqref{EQ:inc2} resp.) assures that 
$$
\int_ {\Omega} \big|f (x) \overline{g (x) } \big| \, dx  < \infty  
$$
for any $f \in L^p (\Omega)$, $1 \leq p \leq \infty$, and $g \in \mathcal X (\Omega)$ 
($g \in \mathcal Z (\Omega) $ resp.). 
Hence we can regard 
functions in the Lebesgue spaces 
as elements in $\mathcal X'(\Omega)$ and $\mathcal Z'(\Omega)$ as follows:\\

\noindent 
{\bf Definition.}
For $f\in L^1(\Omega)+L^\infty(\Omega)$,  
we identify 
$f$ as an element in $\mathcal{X}^\prime(\Omega)$
{\rm (}$\mathcal{Z}^\prime(\Omega)$ resp.{\rm )}
by letting
\begin{equation*}
\label{EQ:regard}
{}_{\mathcal{X}'(\Omega)} \langle f , g \rangle _{\mathcal{X}(\Omega)} 
=  \int_\Omega f(x)\overline{g(x)}\, dx 
\quad \left(
{}_{\mathcal{Z}'(\Omega)} \langle f , g \rangle _{\mathcal{Z}(\Omega)} 
=  \int_\Omega f(x)\overline{g(x)}\, dx
\quad \mathrm{resp.}\right)
\end{equation*}
for any $g\in \mathcal{X}(\Omega)$ 
{\rm (}$g\in \mathcal{Z}(\Omega)$ 
resp.{\rm )}. 

\vspace{0.5cm}

Next, we introduce the notion of dual operators
on $\mathcal{X}^\prime(\Omega)$ and 
$\mathcal{Z}^\prime(\Omega)$.\\

\noindent 
{\bf Definition (Dual operators).} 
Let $\phi$ be a real-valued 
Borel measurable function on $\mathbb R$. 
\begin{enumerate}
\item[(i)] 
For a mapping $\phi (\mathH): \mathcal X (\Omega) \to \mathcal X (\Omega)$, 
we define $\phi (\mathH): \mathcal X' (\Omega) \to \mathcal X' (\Omega)$ 
by letting 
\begin{equation}\label{EQ:X'}
{}_{{\mathcal X'(\Omega)} }
\big\langle \phi (\mathH) f , g \big\rangle _{\mathcal X (\Omega)} 
:= 
{}_{{\mathcal X'(\Omega)}}\big\langle f , \phi (\mathH) g \big\rangle_{\mathcal X (\Omega)}  
\end{equation}
for any $f\in \mathcal X^\prime(\Omega)$ and $g\in \mathcal X(\Omega)$. 

\item[(ii)] 
For a mapping $\phi (\mathH) : \mathcal Z(\Omega) \to \mathcal Z(\Omega)$, 
we define $\phi (\mathH) : \mathcal Z'(\Omega) \to \mathcal Z'(\Omega)$ 
by letting 
\begin{equation*}\label{EQ:Z'}
{}_{\mathcal{Z}'(\Omega)} \big\langle \phi (\mathH) f , g \big\rangle _{\mathcal{Z}(\Omega)}
:= 
{}_{\mathcal{Z}'(\Omega)} \big\langle f , \phi (\mathH) g \big\rangle _{\mathcal{Z}(\Omega)}
\end{equation*}
for any $f\in \mathcal Z^\prime(\Omega)$ and $g\in \mathcal Z(\Omega)$. 
\end{enumerate}

\vspace{5mm}

When we consider the inhomogeneous Besov spaces, 
a function $\psi$, whose support 
is restricted in the neighborhood 
of the origin,
is needed. 
More precisely, let $\psi \in 
C^\infty_0(\mathbb R)$ be a function
satisfying 
\begin{equation*} \label{EQ:psi}
\psi (\lambda^2) 
 + \sum _{ j=1}^\infty  \phi_j (\lambda) 
 = 1 
 \quad \text{for } \lambda \geq 0. 
\end{equation*}

\vspace{5mm}

We are now in a position to give the 
definition of Besov spaces
generated by $\mathcal{H}$.\\

\noindent 
{\bf Definition (Besov spaces).} 
Let $s \in \mathbb R$ and $1 \leq p,q \leq \infty$. Then the Besov spaces 
are defined as follows:  
\begin{enumerate}
\item[(i)] 
The inhomogeneous
Besov spaces $B^s_{p,q} (\mathH) $ are defined by letting 
\begin{equation}\notag
B^s_{p,q} (\mathH)
:= \left\{ 
    f \in \mathcal X'(\Omega) 
     \,\Big|\,
     \| f \|_{B^s_{p,q} (\mathH)} < \infty
   \right\} , 
\end{equation}
where
\begin{equation}\notag 
     \| f \|_{B^s_{p,q} (\mathH)} 
       := \| \psi (\mathH) f \|_{L^p (\Omega) } 
          + \left\| \big\{ 2^{sj} \| \phi_j (\sqrt{\mathH}) f \|_{L^p (\Omega) }  
                 \big\}_{j \in \mathbb  N}
          \right\|_{\ell^q (\mathbb N)}. 
\end{equation}

\item[(ii)] 
The homogeneous
Besov spaces $\dot B^s_{p,q} (\mathH) $ are defined by letting
\begin{equation} \notag 
\dot B^s_{p,q}  (\mathH)
:= \left\{ f \in \mathcal Z'(\Omega) 
     \,\Big|\, 
     \| f \|_{\dot B^s_{p,q}(\mathH)} < \infty 
   \right\} , 
\end{equation}
where
\begin{equation}\notag 
     \| f \|_{\dot B^s_{p,q}(\mathH)} 
       := \left\| \big\{ 2^{sj} \| \phi_j (\sqrt{\mathH}) f\|_{L^p(\Omega)}
                 \big\}_{j \in \mathbb Z}
          \right\|_{\ell^q (\mathbb Z)}. 
\end{equation}
\end{enumerate}

\vspace{0.5cm}

It is proved in Theorem 2.5 from 
\cite{IMT-Besov} that  
$B^s_{p,q}(\mathH)$ and $\dot B^s_{p,q}(\mathH)$ 
are Banach spaces, and 
\[
\mathcal{X}(\Omega)\hookrightarrow B^s_{p,q}(\mathH)
\hookrightarrow \mathcal{X}^\prime(\Omega),
\]
\[
\mathcal{Z}(\Omega)\hookrightarrow 
\dot{B}^s_{p,q}(\mathH)
\hookrightarrow \mathcal{Z}^\prime(\Omega).
\]
for any $s\in \mathbb R$ and $1\le p,q\le \infty$.

\vspace{5mm}


We conclude this section by giving two remarks; 
the first one is 
the regularity numbers such that the bilinear estimates hold, and 
the second is about necessity of the assumption on the gradient estimate \eqref{EQ:grad-infty}.
As is well known, when $\Omega$ is the whole 
space $\R^n$, one does not need to impose any 
restriction on the regularity number $s>0$ of 
Besov spaces. 
However, when we consider these estimates 
for functions whose regularity is measured by the Dirichlet 
Laplacian $\mathH$ on domains, 
a restriction is required on the regularity.
In fact, it is possible to construct a 
counter-example for high regularity 
(see appendix \ref{App:AppendixA}).
This is because $\mathH(fg)$ 
does not necessarily belong to 
$\mathcal D(\mathH)$ even if $f$ and $g$ belong to 
$\mathcal D(\mathH ^2)$.
This can be seen from the following observation: 
Let $\Omega$ be 
a domain with smooth boundary. 
Applying 
the Leibniz rule to $\mathcal H (fg)$, 
we are confronted with the term $\nabla f \cdot \nabla g$ 
which does not belong to $\mathcal D(\mathH)$,
since it does not in general vanish
on the boundary.  
Here, we refer to a paper \cite{T-2018} in 
which the one dimensional differential operator $\partial_x$ 
maps functions involved with the Dirichlet boundary condition 
into those with the Neumann one, and vice versa.
Hence, in general, it is impossible  to 
get the estimates in high regularity.

As to the second remark, as far as our proof of main theorem is concerned, we need to  
estimate the derivative of functions. Therefore, the gradient estimates 
for heat semigroup in $L^\infty$ or even $L^p$ are required.\\


This paper is organized as follows. 
In \S\ref{sec:sec2} we state the main result. 
In \S\ref{sec:sec3} 
we prepare some useful lemmas to prove the main theorem. In \S\ref{sec:sec4} 
we prove the main theorem. 
In \S\ref{sec:sec5} 
we discuss 
the bilinear estimates
in the spaces generated by the Schr\"odinger operators.

\section{Statement of result} \label{sec:sec2}

Let us consider a domain $\Omega$ such that  
the following gradient estimate
\begin{equation}\label{EQ:grad-infty}
\|\nabla e^{-t\mathH}\|_{L^\infty(\Omega)\to L^\infty(\Omega)} \le Ct^{-\frac{1}{2}}
\end{equation}
holds either for any $t \in(0,1]$ or for any $t>0$, 
where $\{e^{-t\mathH}\}_{t>0}$ is the semigroup generated by $\mathH$. 
\\

We shall prove here the following:

\begin{thm}\label{thm:bilinear}
Let $0<s<2$ and $p$, $p_1,p_2,p_3,p_4$ and 
$q$ be such that
$$
1\le p, p_1,p_2, p_3,p_4, q\le \infty 
\quad \text{and}
\quad \frac{1}{p}=\frac{1}{p_1}+\frac{1}{p_2}=\frac{1}{p_3}+\frac{1}{p_4}.
$$
Then the following assertions hold{\rm :}
\begin{itemize}
\item[(i)] 
Let $\Omega$ be a domain of $\mathbb R^n$ such that  
\eqref{EQ:grad-infty} holds for any $t \in(0,1]$. 
Then there exists a constant $C>0$ such that 
\begin{equation}\label{EQ:bilinear1}
\|fg\|_{B^s_{p,q}(\mathH)} 
\le 
C\left(
\|f\|_{B^s_{p_1,q}(\mathH)}
\|g\|_{L^{p_2}(\Omega)}
+
\|f\|_{L^{p_3}(\Omega)}
\|g\|_{B^s_{p_4,q}(\mathH)}
\right)
\end{equation}
for any 
$f\in B^s_{p_1, q}(\mathH)\cap L^{p_3}(\Omega)$ 
and $g\in B^s_{p_4, q}(\mathH) \cap L^{p_2}(\Omega)$. 
\item[(ii)]
Let $\Omega$ be a domain of $\mathbb R^n$ such that  
\eqref{EQ:grad-infty} holds for any $t>0$. 
Then there exists a constant $C>0$ such that 
\begin{equation}\label{EQ:bilinear2}
\|fg\|_{\dot{B}^s_{p,q}(\mathH)} 
\le 
C\left(
\|f\|_{\dot{B}^s_{p_1,q}(\mathH)}
\|g\|_{L^{p_2}(\Omega)}
+
\|f\|_{L^{p_3}(\Omega)}
\|g\|_{\dot{B}^s_{p_4,q}(\mathH)}
\right)
\end{equation}
for any 
$f\in \dot{B}^s_{p_1, q}(\mathH)\cap L^{p_3}(\Omega)$ and $g\in \dot{B}^s_{p_4, q}(\mathH)\cap 
L^{p_2}(\Omega)$. 
\end{itemize}
\end{thm}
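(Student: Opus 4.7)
The plan is to adapt the Bony paraproduct decomposition to the spectral framework of $\mathcal{H}$, using the gradient estimate \eqref{EQ:grad-infty} as a quantitative substitute for the Fourier-support cancellations that are unavailable on a general domain. Set $\phi_j := \phi_j(\sqrt{\mathcal{H}})$ and let $S_k := \psi(4^{-k}\mathcal{H})$ denote a smooth low-frequency cutoff. For fixed $N \ge 2$, decompose
\[
fg = \Pi_1(f,g) + \Pi_2(f,g) + \Pi_3(f,g),
\]
where $\Pi_1(f,g) = \sum_j (S_{j-N}f)\,\phi_j g$, $\Pi_2(f,g) = \Pi_1(g,f)$, and $\Pi_3(f,g) = \sum_{|j-k|\le N}(\phi_j f)(\phi_k g)$; the summations run over $\mathbb{N}$ in part (i), where the additional low-frequency piece $\|\psi(\mathcal{H})(fg)\|_{L^p}$ is controlled directly by $L^p$-boundedness of $\psi(\mathcal{H})$ together with H\"older, and over $\mathbb{Z}$ in part (ii).

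The core lemma is a quasi-orthogonality estimate. Writing
\[
\phi_l(\sqrt{\mathcal{H}}) = 4^{-l}\,\tilde\phi_l(\sqrt{\mathcal{H}})\,\mathcal{H}, \qquad \tilde\phi(\lambda) := \lambda^{-2}\phi_0(\lambda),
\]
where $\tilde\phi_l(\sqrt{\mathcal{H}})$ is $L^p$-bounded uniformly in $l$ by the spectral multiplier theorem from \cite{IMT-Besov}, and applying the Leibniz rule
\[
\mathcal{H}(uv) = (\mathcal{H}u)v + u(\mathcal{H}v) - 2\nabla u \cdot \nabla v,
\]
we bound each resulting term by Bernstein-type inequalities. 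The needed estimate $\|\nabla \phi_j h\|_{L^p} \le C 2^j \|h\|_{L^p}$ for $1 \le p \le \infty$ is obtained by representing $\phi_j(\sqrt{\mathcal{H}})$ as a superposition $\int K_j(t)e^{-t\mathcal{H}}\,dt$ of heat semigroups with a kernel $K_j(t) = 4^j K_0(4^j t)$ of rapid decay and integrating \eqref{EQ:grad-infty} against it, the $L^p$-version of \eqref{EQ:grad-infty} coming by interpolation between the $L^\infty$ bound and the contractivity of $e^{-t\mathcal{H}}$ on $L^1$ and $L^\infty$. Together this yields the model estimate: for $l \ge j + N$,
\[
\|\phi_l\bigl((S_{j-N}f)(\phi_j g)\bigr)\|_{L^p} \le C\,4^{j-l}\,\|f\|_{L^{p_3}}\,\|\phi_j g\|_{L^{p_4}},
\]
while for $l < j+N$ the trivial bound via $L^p$-boundedness of $\phi_l$ and H\"older suffices.

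Multiplying by $2^{sl}$ and summing in $j$ produces two geometric convolution kernels in $l - j$: $2^{(s-2)(l-j)}$ in the regime $l > j$ and $2^{s(l-j)}$ in the regime $l \le j$, both $\ell^1$-summable precisely when $0 < s < 2$. Young's inequality on $\ell^q(\mathbb{Z})$ (resp.\ $\ell^q(\mathbb{N})$) then yields $\|\Pi_1(f,g)\|_{\dot B^s_{p,q}(\mathcal{H})} \le C\|f\|_{L^{p_3}}\|g\|_{\dot B^s_{p_4,q}(\mathcal{H})}$; the same scheme handles $\Pi_2$ by symmetry, and $\Pi_3$ via the analogous quasi-orthogonality estimate in the diagonal regime $|j-k|\le N$ together with H\"older.

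The principal obstacle, and the source of the restriction $s < 2$, lies in the quasi-orthogonality lemma. Iterating Leibniz to gain more decay (i.e.\ using $\phi_l = 4^{-lM}\tilde\phi_l\mathcal{H}^M$ for $M > 1$) is blocked because the cross-term $\nabla u \cdot \nabla v$ fails to satisfy the Dirichlet boundary condition, so $\mathcal{H}^2(uv)$ leaves $\mathcal{D}(\mathcal{H})$ in general, as already noted in the remarks preceding the theorem. Hence only $M=1$ is available, giving decay $4^{j-l}$ and restricting summability to $s < 2$---which is sharp in view of the counterexample in Appendix \ref{App:AppendixA}.
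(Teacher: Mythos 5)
Your decomposition into $\Pi_1$, $\Pi_2$, $\Pi_3$, the Leibniz-rule/spectral-localization trick ($\phi_l = 4^{-l}\tilde\phi_l\,\mathcal{H}$ combined with $\mathcal{H}(uv)=(\mathcal{H}u)v+u(\mathcal{H}v)-2\nabla u\cdot\nabla v$), and the geometric-kernel summation with $\ell^1$-summability forcing $0<s<2$ are precisely the paper's method; the quasi-orthogonality bound you state matches the paper's estimate for the ``output frequency higher than input'' regime (terms $II$, $IV$, $VI$ there), and the easy regimes are handled as you describe.

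However, there is a genuine gap. Your starting identity
$fg = \Pi_1(f,g)+\Pi_2(f,g)+\Pi_3(f,g)$
requires that the Littlewood--Paley expansions $f = \sum_j \phi_j(\sqrt{\mathcal H})f$ and $g = \sum_j \phi_j(\sqrt{\mathcal H})g$ converge in $\mathcal X'(\Omega)$, and this is only established for $L^p$ with $p<\infty$ (Lemma~\ref{lem:lem-L2}(ii)). When $p_2=\infty$ or $p_3=\infty$ --- in particular the important endpoint $\|fg\|\lesssim\|f\|_{\dot B^s_{p,q}}\|g\|_{L^\infty}+\|f\|_{L^\infty}\|g\|_{\dot B^s_{p,q}}$ --- the expansion of an $L^\infty$ function converges only modulo the subspace $\mathcal P(\Omega)$ of $\mathcal X'(\Omega)$, so the paraproduct identity fails in $\mathcal X'(\Omega)$ and your argument does not go through. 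The paper devotes the whole second half of the proof and a good part of \S\ref{sec:sec3} (the space $\mathcal P(\Omega)$, Lemma~\ref{lem:P}, the Fatou-type Lemma~\ref{lem:Fatou} and the kernel Lemma~\ref{lem:kernel}) to this case: one truncates the expansions, extracts weak-$*$ limits in $L^\infty$, identifies the defects $P_f, P_g \in \mathcal P(\Omega)$, shows via the gradient estimate that $\mathcal P(\Omega)$ consists of constants, and then passes to the limit using a Fatou inequality for $\dot B^s_{p,q}(\mathcal H)$-norms. None of this is present in your proposal, and it is not a triviality that can be omitted.

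A smaller inaccuracy: you claim the $L^p$ version of the gradient bound $\|\nabla e^{-t\mathcal H}\|_{L^p\to L^p}\lesssim t^{-1/2}$ follows ``by interpolation between the $L^\infty$ bound and the contractivity of $e^{-t\mathcal H}$ on $L^1$.'' Contractivity of $e^{-t\mathcal H}$ on $L^1$ controls $\|e^{-t\mathcal H}\|_{L^1\to L^1}$, not $\|\nabla e^{-t\mathcal H}\|_{L^1\to L^1}$; you cannot interpolate a gradient estimate on one endpoint against a non-gradient estimate on the other. The correct second endpoint is the $L^1$ gradient bound of Proposition~\ref{prop:grad} (valid on any open set), after which Riesz--Thorin gives the range $1\le p\le\infty$, as in Lemma~\ref{lem:nabla}.

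Finally, note that the spectral multiplier bound $\|\tilde\phi_l(\sqrt{\mathcal H})\|_{L^p\to L^p}\le C$ comes from \cite{IMT-bdd} (Proposition~\ref{prop:Lp} here), not from \cite{IMT-Besov}.
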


\vspace{5mm}

As to the range of the regularity number $s$ in Theorem \ref{thm:bilinear}, 
it is not clear whether or not it is sharp. 
However, we can find an $s\ge2$ such that 
Theorem \ref{thm:bilinear} does not hold. 
This topic is discussed in appendix \ref{App:AppendixA}. \\

When $\Omega$ is the whole space $\mathbb R^n$ or the half space $\mathbb R^n_+$ with $n\ge1$, 
we observe from the explicit representation formula of the heat kernels that 
the estimate \eqref{EQ:grad-infty} holds for any 
$t>0$. 
In the rest of this section, 
we give examples of domains such that \eqref{EQ:grad-infty} holds, 
and other examples of domains
where the bilinear estimates still hold for $p$ in some 
restricted ranges. 



%


\begin{enumerate} 
\item[(i)] 
When $\Omega$ is a domain with uniform $C^{2,\alpha}$-boundary for some 
$\alpha\in(0,1)$, \eqref{EQ:grad-infty} holds for any $t\in(0,1]$ 
(see Fornaro, Metafune and Priola \cite{FMP-2004}). 
Hence, the bilinear estimate \eqref{EQ:bilinear1} 
in Theorem \ref{thm:bilinear} holds in such a domain. 
In particular, when $\Omega$ is bounded, 
\eqref{EQ:grad-infty} holds for 
any $t>0$, since the infimum of the spectrum is strictly 
positive (see, e.g., Taniguchi \cite{T-pre} and the references therein). 
Hence, the bilinear estimate \eqref{EQ:bilinear2} in Theorem \ref{thm:bilinear} holds.
\\

%
%
%

\item[(ii)] Let $\Omega$ be an open set in $\Rn$.
Then there exists an exponent $p_0=p_0(\Omega)  \in [2,\infty]$ 
depending on $\Omega$   
such that if $p \in [1,p_0]$, then 
\begin{equation}
\label{EQ:grad-p}
\|\nabla e^{-t\mathH}\|_{L^{p}(\Omega)\to L^{p}(\Omega)} \le Ct^{-\frac{1}{2}}, 
\quad t>0.
\end{equation}
Here we note that \eqref{EQ:grad-p}
was proved for $p \in [1,2]$ in \cite{IMT-bdd}. 
In this case, it should be mentioned that 
we can prove the estimates 
\eqref{EQ:bilinear1} and \eqref{EQ:bilinear2} 
for $1\le p,p_1,p_2,p_3,p_4 \le p_0$ 
by performing some trivial modifications of the proof of Theorem \ref{thm:bilinear}.
\end{enumerate}

\vspace{5mm}

Finally, let us 
mention some domains and the range of $p$ such that \eqref{EQ:grad-p} holds.
\\

\begin{itemize}
\item[(a)] 
Let $n \ge 3$. Assume that $\Omega$ is the exterior domain of a compact set with $C^{1,1}$-boundary. 
Then \eqref{EQ:grad-p} holds for any 
$p\in[1,n]$ (see Theorem 2.1 from Georgiev 
and Taniguchi \cite{GeoTan-pre}). 
In this case we may take $ p_0 = p_0(\Omega)=n$. 
\end{itemize}

\vspace{5mm}

We are able to take domains and $p$ 
such that the Riesz transform is bounded,  
namely, $L^p$-boundedness of $\nabla \mathcal H ^{-\frac{1}{2}}$ 
implies the gradient estimate: 
$$
\|\nabla e^{-t\mathH}f\|_{L^p(\Omega)}
= 
t^{-\frac{1}{2}}\|\nabla \mathcal H^{-\frac{1}{2}} 
 \mathcal (tH) ^{\frac{1}{2}}e^{-t\mathH}f\|_{L^p(\Omega)}
\leq C t^{-\frac{1}{2}} \| f \|_{L^p (\Omega)}
$$
for $t > 0$. 
Hence, the following results are 
immediate consequences of (a) with $p=1$ and $L^p$-boundedness of the Riesz transform for some $p = p_0$
in \cites{Dah-1979,JK-1995} (see also \cites{Shen-1995,Shen-2005,Wood-2007}). \\

\begin{itemize}
\item[(b)] 
Let $n\ge2$. If $\Omega$ is a bounded domain with $C^1$-boundary, 
then \eqref{EQ:grad-p} holds for any 
$p\in[1,\infty)$. In this case we 
may take $p_0$ as any finite number.  \\

\item[(c)] Let $n\ge2$. If $\Omega$ is a bounded and Lipschitz domain, then 
\eqref{EQ:grad-p} holds for any 
$p\in[1, p_0]$, where 
$p_0=3$ for $n\ge3$ and $p_0=4$ for $n = 2$. 
\end{itemize}

\section{Preliminaries} \label{sec:sec3}
In this section we introduce 
some useful lemmas to prove Theorem \ref{thm:bilinear}. 
Here and below, we denote 
by $\mathscr S(\mathbb R)$ the space of all 
rapidly decreasing functions on $\R$.

\subsection{Approximations of the identity} 
The following results can be found in 
our previous paper \cite{IMT-Besov}.
The first one is the following. 
\begin{lem}[Lemma 4.5 from \cite{IMT-Besov}]
\label{lem:decomposition1}
Let $\Omega$ be an open set of $\R^n$. Then the following assertions hold{\rm :}
\begin{enumerate}
\item[(i)] 
For any $f \in \mathcal X(\Omega)$, we have
\begin{equation}\label{907-1}
f = \psi (\mathcal{H}) f 
    + \sum_{j=1}^\infty \phi_j 
    (\sqrt{\mathcal{H}}) f 
   \quad \text{in }
    \mathcal X(\Omega). 
\end{equation}
Furthermore, for any $f \in \mathcal X' (\Omega)$, 
we have also  the identity \eqref{907-1} 
in $\mathcal X ' (\Omega)$, 
and $\psi (\mathcal{H}) f$ and $\phi_j 
(\sqrt{\mathcal{H}}) f$ are regarded as elements 
in $L^\infty (\Omega)$. 

\item[(ii)] For any $f \in \mathcal Z(\Omega)$, we have 
\begin{equation}\label{907-2}
f =  \sum _{ j=-\infty}^\infty \phi_j 
(\sqrt{\mathcal{H}}) f 
   \quad \text{in }
    \mathcal Z (\Omega). 
\end{equation}
Furthermore, for $f \in \mathcal Z'(\Omega)$, we have also the identity \eqref{907-2} 
in $\mathcal Z'(\Omega)$, 
and $\phi_j (\sqrt{\mathcal{H}}) f$ are regarded as  elements in $L^\infty (\Omega)$.  
\end{enumerate}
\end{lem}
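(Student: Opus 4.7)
The plan is to work outward from the spectral theorem on $L^2(\Omega)$. Since $\mathcal{X}(\Omega) \subset \mathcal{D}(\mathH) \subset L^2(\Omega)$ and the partition of unity satisfies $\psi(\lambda^2) + \sum_{j=1}^\infty \phi_j(\lambda) = 1$ for $\lambda \ge 0$, the spectral theorem immediately yields \eqref{907-1} in $L^2(\Omega)$ for any $f \in \mathcal{X}(\Omega)$. For \eqref{907-2} one invokes $\sum_{j \in \mathbb{Z}} \phi_j(\lambda) = 1$ for $\lambda > 0$, which is applicable because the Dirichlet Laplacian on an open set admits no zero eigenvalue (any $H^1_0$ harmonic function vanishes), so that the spectral measure at $\{0\}$ is zero.

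To upgrade convergence from $L^2(\Omega)$ to the Fr\'echet topology of $\mathcal{X}(\Omega)$, I would set $R_N f := \sum_{j > N} \phi_j(\sqrt{\mathH}) f$ and estimate $p_M(R_N f)$. The very definition of the seminorms gives $\|\phi_j(\sqrt{\mathH}) f\|_{L^1(\Omega)} \le 2^{-Mj} p_M(f)$, so absolute $L^1$-summability of the tail produces $\|R_N f\|_{L^1(\Omega)} \to 0$. For the supremum part of $p_M$, the key input is the near-orthogonality $\phi_k(\sqrt{\mathH}) \phi_j(\sqrt{\mathH}) = 0$ whenever $|k - j| \ge 2$, inherited from $\supp \phi_0 \subset [2^{-1}, 2]$. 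It follows that $\phi_k(\sqrt{\mathH}) R_N f$ vanishes for $k \le N - 2$, and for $k \ge N - 1$ reduces to at most three summands $\phi_k(\sqrt{\mathH}) \phi_j(\sqrt{\mathH}) f$ with $j \in \{k-1, k, k+1\}$ and $j > N$. Combining a uniform-in-$k$ $L^1$-bound for $\phi_k(\sqrt{\mathH})$ (a consequence of Gaussian upper bounds for the Dirichlet heat kernel, obtained by domination by the free kernel) with the decay estimate above leads to
$$\sup_{k \in \mathbb{N}} 2^{Mk} \|\phi_k(\sqrt{\mathH}) R_N f\|_{L^1(\Omega)} \le C\, 2^{-N} p_{M+1}(f) \to 0.$$
The argument for $\mathcal{Z}(\Omega)$ is entirely parallel: the enhanced decay $2^{M|j|} \|\phi_j(\sqrt{\mathH}) f\|_{L^1} \le q_M(f)$ built into $q_M$ handles the $j \to -\infty$ tail.

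For the dual statements I would pass through duality. Given $f \in \mathcal{X}'(\Omega)$ and any $g \in \mathcal{X}(\Omega)$, I apply the already-proved identity to $g$ and then interchange the duality pairing with the $\mathcal{X}(\Omega)$-convergent sum, using continuity of $\psi(\mathH)$ and $\phi_j(\sqrt{\mathH})$ on $\mathcal{X}(\Omega)$ as encoded in \eqref{EQ:X'}. To see that $\psi(\mathH) f$ and $\phi_j(\sqrt{\mathH}) f$ are in fact $L^\infty(\Omega)$ functions, I would realize each operator through its integral kernel $K(x,y)$ and write $(\phi_j(\sqrt{\mathH}) f)(x) = {}_{\mathcal{X}'(\Omega)}\langle f, K(x, \cdot)\rangle_{\mathcal{X}(\Omega)}$; an $x$-uniform bound on the seminorms $p_M(K(x, \cdot))$, which again follows from Gaussian-type kernel estimates, then produces the desired pointwise $L^\infty$ bound. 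The principal technical obstacle, in my view, is precisely the uniform-in-$j$ $L^1$-boundedness of the multipliers $\phi_j(\sqrt{\mathH})$ and the membership of their kernels $K(x, \cdot)$ in $\mathcal{X}(\Omega)$ uniformly in $x$; both rest on the classical Gaussian upper bound for the Dirichlet heat kernel on an arbitrary open set, together with a finite-propagation or functional-calculus argument converting this bound into the required control of spectrally localized pieces.
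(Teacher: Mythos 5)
The paper does not give a proof of this lemma; it is imported verbatim as Lemma~4.5 from \cite{IMT-Besov}, so there is no in-paper argument to set your proposal against. That said, your outline is essentially the standard proof and invokes exactly the machinery that the present paper builds on elsewhere: the uniform-in-$j$ $L^1$-bound for $\phi_j(\sqrt{\mathH})$ that you need is precisely Proposition~\ref{prop:Lp} (the spectral multiplier estimate, which indeed ultimately rests on Gaussian upper bounds for the Dirichlet heat kernel, as you indicate); the near-orthogonality $\phi_k(\sqrt{\mathH})\phi_j(\sqrt{\mathH})=0$ for $|k-j|\ge 2$ is used throughout the paper; and the kernel representation ${}_{\mathcal X'(\Omega)}\langle f, \phi_j(\sqrt{\mathH})(x,\cdot)\rangle_{\mathcal X(\Omega)}$ you describe for the $L^\infty$-realization is exactly what is carried out in Lemma~\ref{lem:kernel} and in the proof of Lemma~\ref{lem:Fatou}. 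Your observation that $E_{\mathH}(\{0\})=0$ (no $L^2\cap H^1_0$ zero-mode) is also correct and is the reason the homogeneous decomposition closes up. Two small imprecisions you should clean up before calling it a proof: (1) with $R_Nf=\sum_{j>N}\phi_j(\sqrt{\mathH})f$, the product $\phi_k(\sqrt{\mathH})R_Nf$ vanishes for $k\le N-1$, not $k\le N-2$ as written; (2) for the tail estimate in $\mathcal X$ you should also observe that the identity $R_Nf = f - \psi(\mathH)f - \sum_{j=1}^N\phi_j(\sqrt{\mathH})f$ holds not only in $L^2$ (spectral theorem) but a.e., hence in $L^1$, since all terms lie in $L^1(\Omega)$ by the same multiplier bound. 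Neither affects the substance.
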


The second one is the following.
\begin{lem}\label{lem:lem-L2}
Let $\Omega$ be an open set of $\R^n$. Then 
the following assertions hold{\rm :} 
\begin{itemize}
\item[(i)]
For any $f \in L^2(\Omega)$ and $j\in \mathbb Z$,
we have 
\begin{equation*}
\label{EQ:identity1}
f= \psi(2^{-2j}\mathH)f
+
\sum^\infty_{k=j+1}\phi_k(\sqrt{\mathH})f \quad 
\text{in }L^2(\Omega)
\end{equation*}
and 
\begin{equation*}
\label{EQ:identity2}
f= 
\sum^j_{k=-\infty}\phi_k(\sqrt{\mathH})f 
+
\sum^\infty_{k=j+1}\phi_k(\sqrt{\mathH})f \quad 
\text{in }L^2(\Omega)
\end{equation*}
\item[(ii)] Let $1 \le p < \infty$. Then for any $f \in L^p(\Omega)$,
we have 
\begin{equation}\label{EQ:p-converge}
f =  \sum _{ j=-\infty}^\infty \phi_j 
(\sqrt{\mathcal{H}}) f 
   \quad \text{in }
    \mathcal X' (\Omega). 
\end{equation}
\end{itemize}
\end{lem}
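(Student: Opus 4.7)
I would reduce both parts to pointwise partition-of-unity identities for scalar Borel functions and apply the spectral theorem for $\mathcal H$ on $L^{2}(\Omega)$; part (ii) also requires a duality and density argument.

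\textbf{Proof of (i).} Starting from $\psi(\lambda^{2})+\sum_{k=1}^{\infty}\phi_{k}(\lambda)=1$ for $\lambda\ge 0$ and the scaling relation $\phi_{k}(2^{-j}\lambda)=\phi_{k+j}(\lambda)$, one derives
\[
\psi(2^{-2j}\lambda^{2})+\sum_{k=j+1}^{\infty}\phi_{k}(\lambda)=1 \quad(\lambda\ge 0),\qquad
\sum_{k=-\infty}^{j}\phi_{k}(\lambda)=\psi(2^{-2j}\lambda^{2}) \quad(\lambda>0),
\]
the second identity also using the classical partition of unity $\sum_{k\in\mathbb Z}\phi_{k}(\lambda)=1$ for $\lambda>0$. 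Both sides agree at $\lambda=0$ as well (both vanish). The partial sums are uniformly bounded by $1$. Inserting $\sqrt{\mathcal H}$ via the functional calculus, the bounded convergence theorem applied to the finite spectral measure $d\|E_{\mathcal H}(\cdot)f\|_{L^{2}}^{2}$ yields $L^{2}$-convergence of the partial sums to the stated limits, which gives both identities of (i).

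\textbf{Part (ii): setup via duality.} Fix $f\in L^{p}(\Omega)$ with $1\le p<\infty$ and $g\in\mathcal X(\Omega)$. Since each $\phi_{j}$ is real-valued, the dual-operator definition gives
\[
\sum_{j=-M}^{N}{}_{\mathcal X'(\Omega)}\bigl\langle\phi_{j}(\sqrt{\mathcal H})f,g\bigr\rangle_{\mathcal X(\Omega)}
=\int_{\Omega}f\cdot\overline{T_{M,N}g}\,dx,\qquad
T_{M,N}:=\sum_{j=-M}^{N}\phi_{j}(\sqrt{\mathcal H}).
\]
Because $\mathcal X(\Omega)\hookrightarrow L^{2}(\Omega)$, applying (i) to $g$ with $j=N$ and $j=-M$ produces the $L^{2}$-identity $T_{M,N}g=\psi(2^{-2N}\mathcal H)g-\psi(2^{2M}\mathcal H)g$. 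As $M,N\to\infty$, the first term converges to $g$ in $L^{2}(\Omega)$ and the second tends to $0$ in $L^{2}(\Omega)$; the latter uses that the Dirichlet Laplacian on an open set has trivial kernel, so $E_{\mathcal H}(\{0\})=0$. Hence $T_{M,N}g\to g$ in $L^{2}(\Omega)$.

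\textbf{Passage to general $f\in L^{p}$ and the main obstacle.} When $f\in L^{p}\cap L^{2}(\Omega)$, the Cauchy--Schwarz inequality immediately yields $\langle T_{M,N}f,g\rangle\to\langle f,g\rangle$. For general $f\in L^{p}(\Omega)$ I would approximate $f$ in $L^{p}$ by $f_{n}\in L^{p}\cap L^{2}$ and exploit a uniform bound $\sup_{M,N}\|T_{M,N}\|_{L^{p}(\Omega)\to L^{p}(\Omega)}\le C$ via the three-term splitting
\[
|\langle T_{M,N}f-f,g\rangle|
\le \|T_{M,N}(f-f_{n})\|_{L^{p}}\|g\|_{L^{p'}}+|\langle T_{M,N}f_{n}-f_{n},g\rangle|+\|f-f_{n}\|_{L^{p}}\|g\|_{L^{p'}},
\]
letting first $M,N\to\infty$ (the middle term vanishes by the previous paragraph applied to $f_{n}\in L^{2}$) and then $n\to\infty$. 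The main obstacle is establishing the uniform $L^{p}$-bound: since $T_{M,N}=\psi(2^{-2N}\mathcal H)-\psi(2^{2M}\mathcal H)$ on $L^{2}$, this reduces to uniform $L^{p}$-boundedness of the spectral multipliers $\psi(t\mathcal H)$ for $t>0$. That is a spectral-multiplier statement which should be available from the heat-kernel-based estimates developed in \cite{IMT-Besov}, and the whole argument ultimately rests on it.
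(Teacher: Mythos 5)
Your proposal follows essentially the same route as the paper, which for part (i) simply refers to the proof of Lemma~4.5 in \cite{IMT-Besov} and for part (ii) says only ``density argument''; you have reconstructed (i) via the spectral theorem and filled in the density step for (ii) in the same spirit, correctly identifying that one needs a uniform $L^{p}$-bound on the partial-sum operators $T_{M,N}=\psi(2^{-2N}\mathcal H)-\psi(2^{2(M+1)}\mathcal H)$ (available from Proposition~\ref{prop:Lp}, which incidentally comes from \cite{IMT-bdd}, not \cite{IMT-Besov}). One small but genuine slip in part (i): from $\psi(\lambda^{2})+\sum_{j\ge 1}\phi_{j}(\lambda)=1$ at $\lambda=0$ one gets $\psi(0)=1$, not $0$, so the two sides of $\sum_{k\le j}\phi_{k}(\lambda)=\psi(2^{-2j}\lambda^{2})$ do \emph{not} ``both vanish'' at $\lambda=0$. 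This does not sink the argument, because the spectral calculus only needs the identity $E_{\mathcal H}$-a.e.\ and the Dirichlet Laplacian on an open set has $E_{\mathcal H}(\{0\})=0$ --- but that fact, which you invoke only in part (ii), is already what makes the second identity of (i) (and hence the first step of (ii)) go through, and it should be stated there rather than claimed unnecessary.
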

\begin{proof}
The assertion (i) is proved in the course of 
proof of Lemma 4.5 from \cite{IMT-Besov}. We prove the assertion (ii). 
Since $L^2(\Omega) \hookrightarrow \mathcal X' (\Omega)$, the identity \eqref{EQ:p-converge} holds for any $f \in L^p(\Omega)\cap L^2(\Omega)$. 
Then the identity \eqref{EQ:p-converge} holds for any $f \in L^p(\Omega)$ by the density argument, since $1 \le p < \infty$. 
The proof of Lemma \ref{lem:lem-L2} is finished. 
\end{proof}

\subsection{Functional calculus for spectral multipliers}
\label{sec:3.1}
This subsection is devoted to proving 
$L^p$-estimates for the operators $\psi(\mathH)$ and $\phi_j(\sqrt{\mathH})$. \\

We recall the following two results 
from \cite{IMT-bdd}.

\begin{prop}[Theorem 1.1 from \cite{IMT-bdd}]
\label{prop:Lp}
Let $\Omega$ be an open set of $\R^n$. Then
for any 
$\phi \in \mathscr S(\mathbb R)$ and 
$1 \le p\le q\le \infty$, 
there exists a constant $C>0$ such that 
\begin{equation}\label{EQ:Lp}
\|\phi(\theta\mathH)\|_{L^p(\Omega)\to L^q(\Omega)}
\le C\theta^{-\frac{n}{2}\left(\frac{1}{p}-\frac{1}{q}\right)} 
\end{equation}
for any $\theta > 0$. 
\end{prop}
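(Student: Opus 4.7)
My strategy is to first derive a pointwise polynomial off-diagonal bound on the integral kernel of $\phi(\theta\mathcal{H})$ and then pass to the operator norm by Young's convolution inequality. The starting point is the classical Gaussian upper bound for the Dirichlet heat kernel on a general open set $\Omega \subset \mathbb{R}^n$: extending $p_t(x,\cdot)$ by zero off $\Omega$ and comparing with the Euclidean heat kernel via the maximum principle yields
$$0 \le p_t(x,y) \le (4\pi t)^{-n/2}\, e^{-|x-y|^2/(4t)}, \quad t>0, \ x,y \in \Omega.$$
Applying Young's inequality to this kernel bound already settles \eqref{EQ:Lp} in the special case $\phi(\lambda) = e^{-\lambda}$, giving the familiar heat-semigroup estimate $\|e^{-t\mathcal{H}}\|_{L^p(\Omega) \to L^q(\Omega)} \le C t^{-\frac{n}{2}(\frac{1}{p} - \frac{1}{q})}$.

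The core of the argument is to upgrade this to the Schwartz-type kernel estimate
$$|K_{\phi,\theta}(x,y)| \le C_{N,\phi}\,\theta^{-n/2}\bigl(1+\theta^{-1/2}|x-y|\bigr)^{-N}, \quad N \in \mathbb{N},$$
for the integral kernel $K_{\phi,\theta}$ of $\phi(\theta\mathcal{H})$. The central tool is the Davies--Gaffney off-diagonal estimate $|\langle e^{-t\mathcal{H}}f,g\rangle| \le e^{-d^2/(4t)}\|f\|_{L^2}\|g\|_{L^2}$ for $f,g \in L^2(\Omega)$ whose supports are separated by a distance $d$, a direct consequence of the Gaussian bound above. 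To exploit it I plan to decompose $\phi$ dyadically on the spectral side, represent each dyadic piece as an integral of heat semigroups at its natural time scale, apply Davies--Gaffney to pairings against test functions localised in well-separated balls, and sum the contributions using the Schwartz decay of $\phi$ to extract arbitrary polynomial off-diagonal decay at spatial scale $\sqrt{\theta}$. The $L^\infty$-smoothing supplied by the Gaussian bound then upgrades the resulting quadratic-form estimate to the pointwise kernel bound displayed above. A final Young's inequality with the exponent $r$ defined by $1/r = 1 - 1/p + 1/q$ gives $\|K_{\phi,\theta}\|_{L^r(\Omega)} \le C\theta^{-\frac{n}{2}(\frac{1}{p}-\frac{1}{q})}$, which is exactly \eqref{EQ:Lp}.

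The main obstacle lies in producing the pointwise Schwartz-type kernel bound for a general Schwartz spectral multiplier from only $L^2$ off-diagonal data. Since $\Omega$ carries no spatial scaling and no Fourier-multiplier calculus is available, the dyadic resummation must be carried out purely through the spectral theorem, balancing the Davies--Gaffney exponential factor against the decay of $\phi$ to obtain polynomial decay of any order uniformly in $\theta>0$; in particular this avoids any appeal to Hörmander--Mihlin type theorems, which would break down at the endpoints $p=1$ and $q=\infty$. Once this kernel bound is secured, the remainder of the proof is a routine convolution estimate.
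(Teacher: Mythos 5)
This proposition is not proved in the paper at all: it is imported verbatim as Theorem 1.1 of \cite{IMT-bdd}, so the only meaningful comparison is with the standard proof of that cited result. Your overall architecture matches it: the Gaussian upper bound $0\le p_t(x,y)\le(4\pi t)^{-n/2}e^{-|x-y|^2/(4t)}$ for the Dirichlet heat kernel on an arbitrary open set (by domain monotonicity), a pointwise kernel estimate $|K_{\phi,\theta}(x,y)|\le C_N\theta^{-n/2}(1+\theta^{-1/2}|x-y|)^{-N}$ for the Schwartz multiplier, and then the Schur/generalized Young test with $1/r=1-1/p+1/q$ (the exponent bookkeeping in this last step is correct; note only that on a general $\Omega$ there is no convolution, so what you need is the symmetric-kernel Schur test rather than Young's convolution inequality).

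The genuine gap sits exactly where you locate the difficulty. You propose to ``represent each dyadic piece as an integral of heat semigroups at its natural time scale'' and then feed it into Davies--Gaffney. But a function of $\lambda$ synthesized as $\int_0^\infty e^{-t\lambda}\,d\mu(t)$ from the real-time semigroup is a (difference of) Laplace transform(s), hence real-analytic on $(0,\infty)$; a compactly supported spectral bump $\phi\chi_j$ admits no such representation, so the object on which your entire dyadic resummation rests does not exist as stated. To close the argument you must replace this step by one of the standard devices: (i) the complex-time semigroup $e^{-z\mathcal{H}}$, $\mathrm{Re}\,z>0$, via a Fourier/contour representation of each bump, which in turn requires extending the Gaussian and Davies--Gaffney bounds to complex time by a Phragm\'en--Lindel\"of argument; (ii) finite propagation speed for $\cos(t\sqrt{\mathcal{H}})$ (itself equivalent to Davies--Gaffney) combined with Fourier inversion in the variable $\sqrt{\lambda}$, i.e.\ the Coulhon--Sikora route; or (iii) the Helffer--Sj\"ostrand formula together with off-diagonal resolvent bounds. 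Any of these yields the per-piece off-diagonal estimate at spatial scale $\sqrt{\theta\,2^{-j}}$, after which your summation over $j$ --- trading the rapid decay of $\phi$ against the loss in scale --- and the $L^2\to L^\infty$ smoothing upgrade do produce the claimed kernel bound. Without one of them the plan does not compile into a proof.
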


\begin{prop}[Theorem 1.2 from \cite{IMT-bdd}]
\label{prop:grad}
Let $\Omega$ be an open set of $\R^n$. Then 
for any 
$\phi \in \mathscr S(\mathbb R)$ and 
$1 \le p\le 2$,
there exists a constant $C>0$ such that 
\begin{equation*}
\label{EQ:grad}
\|\nabla \phi(\theta \mathH)\|_{L^p(\Omega)\to L^p(\Omega)} 
\le C\theta^{-\frac{1}{2}}
\end{equation*}
for any  $\theta > 0$. 
\end{prop}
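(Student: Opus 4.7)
The plan is to reduce both estimates to a uniform bound on the Littlewood-Paley piece $\phi_\ell(\sqrt{\mathcal H})(fg)$ in $L^p(\Omega)$, with weight $2^{s\ell}$. Using Lemma \ref{lem:decomposition1} and Lemma \ref{lem:lem-L2} to expand both factors, I obtain
\begin{equation*}
fg = \sum_{j,k}\phi_j(\sqrt{\mathcal H})f\cdot \phi_k(\sqrt{\mathcal H})g,
\end{equation*}
and split the double sum into three Bony-type paraproducts according to $j\le k-N$, $|j-k|\le N$, and $j\ge k+N$ for a fixed integer $N$. By the symmetry of \eqref{EQ:bilinear1}--\eqref{EQ:bilinear2} in $(f,g)$, it suffices to handle the low-high piece
\begin{equation*}
\Pi_1(f,g) := \sum_k S_{k-N}(f)\cdot \phi_k(\sqrt{\mathcal H})g,
\end{equation*}
where $S_{k-N}(f)$ denotes the sum of the LP pieces of $f$ below level $k-N$ (including $\psi(\mathcal H)f$ in the inhomogeneous setting), together with the diagonal piece $\Pi_3(f,g) := \sum_k \phi_k(\sqrt{\mathcal H})f\cdot \tilde\phi_k(\sqrt{\mathcal H})g$ where $\tilde\phi_k$ is a fattened $\phi_k$.

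For $\Pi_1$, the key decomposition is
\begin{equation*}
\phi_\ell(\sqrt{\mathcal H})\bigl[S_{k-N}(f)\cdot\phi_k(\sqrt{\mathcal H})g\bigr] = \bigl[\phi_\ell(\sqrt{\mathcal H}),\,M_{S_{k-N}(f)}\bigr]\phi_k(\sqrt{\mathcal H})g + S_{k-N}(f)\cdot \phi_\ell(\sqrt{\mathcal H})\phi_k(\sqrt{\mathcal H})g,
\end{equation*}
where $M_u$ denotes pointwise multiplication by $u$. Since the supports of $\phi_\ell$ and $\phi_k$ on $\mathbb R$ are disjoint once $|\ell-k|$ exceeds a fixed threshold, the second term survives only on a bounded band $|\ell-k|\le C$; there, H\"older combined with Proposition \ref{prop:Lp} applied to $S_{k-N}(f)$ in $L^{p_3}$ and to $\phi_k(\sqrt{\mathcal H})g$ in $L^{p_4}$ produces the bound $\|f\|_{L^{p_3}}\cdot 2^{-s\ell}\bigl(2^{sk}\|\phi_k(\sqrt{\mathcal H})g\|_{L^{p_4}}\bigr)$, which is $\ell^q$-summable in $\ell$ by the definition of the Besov norm of $g$.

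The commutator term is the crux of the argument and where the hypothesis \eqref{EQ:grad-infty} enters. I represent $\phi_\ell(\sqrt{\mathcal H})$ as a heat-semigroup integral $\int_0^\infty m(t,2^\ell)\,e^{-t\mathcal H}\,dt$ with a kernel $m$ concentrated near $t\sim 2^{-2\ell}$ and rapidly decaying, then apply the Duhamel identity
\begin{equation*}
\bigl[e^{-t\mathcal H},\,M_u\bigr] = -\int_0^t e^{-(t-\sigma)\mathcal H}\bigl(\Delta u + 2\nabla u\cdot\nabla\bigr)e^{-\sigma\mathcal H}\,d\sigma
\end{equation*}
with $u=S_{k-N}(f)$. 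The low-frequency derivatives $\|\nabla S_{k-N}(f)\|_{L^{p_3}}\lesssim 2^k\|f\|_{L^{p_3}}$ and $\|\Delta S_{k-N}(f)\|_{L^{p_3}}\lesssim 2^{2k}\|f\|_{L^{p_3}}$ follow from Propositions \ref{prop:Lp} and \ref{prop:grad} together with \eqref{EQ:grad-infty}, the latter extended to all $p_3\in[1,\infty]$ by Riesz-Thorin interpolation between Proposition \ref{prop:grad} (valid for $p\le 2$) and \eqref{EQ:grad-infty} (at $p=\infty$). The remaining factor $\nabla e^{-\sigma\mathcal H}$ inside the integral is absorbed by the same gradient hypothesis, combined when necessary with the spectral decay of $e^{-\sigma\mathcal H}\phi_k(\sqrt{\mathcal H})g$ for large $\sigma 2^{2k}$. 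Balancing the scale $t\sim 2^{-2\ell}$ against the derivative gains $2^k$ and $2^{2k}$ produces a geometric off-diagonal factor $2^{-\gamma|\ell-k|}$ with $\gamma>0$; the ceiling $s<2$ arises because at most two $\mathcal H$-derivatives can be transferred onto $u$ via the Leibniz-like identity $\mathcal H(uV)=(-\Delta u)V - 2\nabla u\cdot\nabla V + u\,\mathcal H V$, and this is precisely the obstruction illustrated by the counter-example in appendix \ref{App:AppendixA}.

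For the diagonal piece $\Pi_3(f,g)$ no spectral cancellation is available; I estimate $\phi_\ell(\sqrt{\mathcal H})$ directly via Proposition \ref{prop:Lp}, apply H\"older to split into $L^{p_1}\times L^{p_2}$, and sum over $k\gtrsim \ell$ using $s>0$ to obtain a convergent geometric tail. In the inhomogeneous case one additionally estimates $\psi(\mathcal H)(fg)$ directly through Proposition \ref{prop:Lp}. The main obstacle I anticipate is executing the commutator analysis on a generic open set, where no Fourier tools or explicit heat kernels are available and \eqref{EQ:grad-infty} is the sole substitute; its differing ranges of validity ($t\in(0,1]$ vs.\ all $t>0$) are exactly what distinguishes parts (i) and (ii). A final routine matter is justifying convergence of the paraproduct series in $\mathcal X'(\Omega)$ or $\mathcal Z'(\Omega)$, which is supplied by Lemmas \ref{lem:decomposition1} and \ref{lem:lem-L2} together with the a priori estimates produced above.
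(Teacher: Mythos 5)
Your proposal does not address the statement you were asked to prove. Proposition \ref{prop:grad} is a gradient bound for spectral multipliers, namely $\|\nabla\phi(\theta\mathcal H)\|_{L^p(\Omega)\to L^p(\Omega)}\le C\theta^{-1/2}$ for $\phi\in\mathscr S(\mathbb R)$, $1\le p\le 2$, and an \emph{arbitrary} open set $\Omega$ (no hypothesis such as \eqref{EQ:grad-infty} is available here). What you have written is instead a sketch of the Bony paraproduct argument for the bilinear estimates \eqref{EQ:bilinear1}--\eqref{EQ:bilinear2}, i.e.\ for Theorem \ref{thm:bilinear}: you decompose $fg$, treat low--high, diagonal, and high--low interactions, discuss the ceiling $s<2$ and the counter-example of appendix \ref{App:AppendixA}, and so on. Nowhere do you estimate the operator norm of $\nabla\phi(\theta\mathcal H)$, which is the entire content of the proposition. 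The mismatch is compounded by circularity: your argument explicitly invokes Proposition \ref{prop:grad} itself (``follow from Propositions \ref{prop:Lp} and \ref{prop:grad}'') as an ingredient, so even reinterpreted charitably it cannot serve as a proof of that proposition.

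For the record, the paper does not prove Proposition \ref{prop:grad} either; it is imported verbatim from Theorem 1.2 of \cite{IMT-bdd}. A self-contained proof would have to proceed along entirely different lines from anything in your proposal: for $p=2$ one uses the form identity $\|\nabla u\|_{L^2(\Omega)}^2=\|\mathcal H^{1/2}u\|_{L^2(\Omega)}^2$ on $H^1_0(\Omega)$ together with the spectral theorem applied to $\lambda\mapsto\sqrt{\lambda}\,\phi(\theta\lambda)$; for $p=1$ one needs weighted (Gaussian-type) pointwise bounds on $\nabla_x$ of the kernel of $\phi(\theta\mathcal H)$, obtained from the Davies--Gaffney estimates and the gradient bound for the heat kernel valid on any open set; the intermediate range $1<p<2$ then follows by interpolation. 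None of these ingredients appear in your write-up, so the gap is not a missing step but a wholesale substitution of one theorem's proof for another's.
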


As related results of Proposition \ref{prop:grad}, we refer to Coulhon and Duong \cite{CD-1999} and 
Ouhabaz \cite{Ouh_2005}.\\

Based on Proposition \ref{prop:Lp}, we have 
the following.

\begin{lem}\label{lem:Lp}
Let $\Omega$ be an open set of $\R^n$, and 
let $1\le p\le \infty$. Then 
the following assertions hold{\rm :}
\begin{itemize}
\item[(i)] 
For any $m \in \mathbb N \cup \{0\}$ 
there exists a constant $C>0$ such that 
\begin{equation}\label{EQ:Lp1}
\left\| 
\mathH^m \psi(2^{-2j}\mathH)
\right\|_{L^p (\Omega)\to L^p (\Omega)} 
\le 
C 2^{2mj}
\end{equation}
for any $j\in\mathbb Z$.
\item[(ii)]
For any $\alpha \in \mathbb R$ there exists a constant $C>0$ such that 
\begin{equation}\label{EQ:Lp2}
\big\| 
\mathH^{\alpha} \phi_j(\sqrt{\mathH})
\big\|_{L^p (\Omega)\to L^p (\Omega)} 
\le 
C 2^{2\alpha j}
\end{equation}
for any $j\in\mathbb Z$. 
Furthermore, for any $\alpha\ge0$, we have 
\begin{equation}\label{EQ:Lp3}
\Big\| \mathH^\alpha
\sum^j_{k=-\infty}\phi_k(\sqrt{\mathH})
\Big\|_{L^p (\Omega)\to L^p (\Omega)} 
\le C 2^{2\alpha j}
\end{equation}
for any $j\in\mathbb Z$.
\end{itemize}
\end{lem}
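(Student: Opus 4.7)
The plan is to reduce all three bounds to Proposition \ref{prop:Lp} via a single rescaling: write the operator in the form $2^{2\beta j}F(2^{-2j}\mathcal{H})$ for an appropriate Schwartz function $F$, and then apply Proposition \ref{prop:Lp} with $p=q$ to get a bound on $\|F(2^{-2j}\mathcal{H})\|_{L^p\to L^p}$ that is uniform in $j$.

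For \eqref{EQ:Lp1} I would take $F(\mu):=\mu^{m}\psi(\mu)$, which lies in $C^{\infty}_{0}(\mathbb{R})\subset\mathscr{S}(\mathbb{R})$ because $m$ is a non-negative integer and $\psi\in C^{\infty}_{0}(\mathbb{R})$. Spectral calculus then gives $\mathcal{H}^{m}\psi(2^{-2j}\mathcal{H})=2^{2mj}F(2^{-2j}\mathcal{H})$, and the rescaling closes the estimate. For \eqref{EQ:Lp2} I would set $F(\mu):=\mu^{\alpha}\phi_{0}(\sqrt{\mu})$ for $\mu>0$, extended by zero on $(-\infty,0]$. The key observation here is that $\phi_{0}(\sqrt{\cdot})$ is supported in the compact set $[1/4,4]$, bounded away from the origin, so the factor $\mu^{\alpha}$ is smooth on that support for every $\alpha\in\mathbb{R}$; hence $F\in C^{\infty}_{0}(\mathbb{R})\subset\mathscr{S}(\mathbb{R})$ even for negative or non-integer $\alpha$. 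Using $\phi_{j}(\sqrt{\mathcal{H}})=\phi_{0}(\sqrt{2^{-2j}\mathcal{H}})$, the same rescaling identity gives $\mathcal{H}^{\alpha}\phi_{j}(\sqrt{\mathcal{H}})=2^{2\alpha j}F(2^{-2j}\mathcal{H})$, and Proposition \ref{prop:Lp} again finishes the job.

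The last bound \eqref{EQ:Lp3} is the one place where the direct scaling trick breaks down, because the natural candidate $\mu^{\alpha}\psi(\mu)$ is only continuous and not smooth at $\mu=0$ for non-integer $\alpha>0$, so it is not Schwartz. I would bypass this by decomposing dyadically and invoking \eqref{EQ:Lp2} on each piece:
\begin{equation*}
\Big\|\mathcal{H}^{\alpha}\sum_{k=-\infty}^{j}\phi_{k}(\sqrt{\mathcal{H}})\Big\|_{L^{p}\to L^{p}} \le \sum_{k=-\infty}^{j}\|\mathcal{H}^{\alpha}\phi_{k}(\sqrt{\mathcal{H}})\|_{L^{p}\to L^{p}} \le C\sum_{k=-\infty}^{j}2^{2\alpha k},
\end{equation*}
where the geometric series converges to $C'\,2^{2\alpha j}$ precisely because $\alpha>0$. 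The interchange of $\mathcal{H}^{\alpha}$ with the sum is justified by matching spectral multipliers, since $\sum_{k\le j}\phi_{k}(\sqrt{\mu})=\psi(2^{-2j}\mu)$ on $(0,\infty)$ by the partition-of-unity identity. The borderline case $\alpha=0$ is handled separately via this identity together with \eqref{EQ:Lp1} for $m=0$.

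The main obstacle, and the reason the argument is not entirely automatic, is exactly this loss of smoothness of $\mu^{\alpha}\psi(\mu)$ at the origin for non-integer $\alpha$, which forces the dyadic summation in \eqref{EQ:Lp3} rather than a single direct application of Proposition \ref{prop:Lp}.
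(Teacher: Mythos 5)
Your proposal matches the paper's argument essentially line for line: rescaling $\mathcal{H}^m\psi(2^{-2j}\mathcal{H})$ and $\mathcal{H}^\alpha\phi_j(\sqrt{\mathcal{H}})$ into the form $2^{2\beta j}F(2^{-2j}\mathcal{H})$ with $F\in C^\infty_0$ and applying Proposition~\ref{prop:Lp}, summing the geometric series in $k$ for \eqref{EQ:Lp3} when $\alpha>0$, and invoking the identity $\sum_{k\le j}\phi_k(\sqrt{\mathcal{H}})=\psi(2^{-2j}\mathcal{H})$ for $\alpha=0$. The only place the paper is slightly more careful is the $\alpha=0$ step, where it records that the operator identity is first established on $L^2$ (via Lemma~\ref{lem:lem-L2}) and then transferred to $L^p$ by density for $p<\infty$ and duality for $p=\infty$; you gesture at this but do not spell it out.
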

\begin{proof}
The estimate \eqref{EQ:Lp1} 
is an immediate consequence of Proposition \ref{prop:Lp}. 
In fact, 
noting that
\[
\lambda^m \psi(\lambda) \in C^\infty_0(\mathbb R),
\]
we conclude from \eqref{EQ:Lp} for $\theta=2^{-2j}$ that
\begin{equation*}
\begin{split}
\left\| 
\mathH^m \psi(2^{-2j}\mathH)
\right\|_{L^p(\Omega)\to L^p(\Omega)}
&=
2^{2mj}
\left\| 
(2^{-2j}\mathH)^m \psi(2^{-2j}\mathH)
\right\|_{L^p(\Omega)\to L^p(\Omega)}\\
&\le
C 2^{2mj}
\end{split}
\end{equation*}
for any $j\in\mathbb Z$.   
In a similar way, we get \eqref{EQ:Lp2}, since 
\[
\lambda^{\alpha} \phi_0(\sqrt{\lambda})
\in C^\infty_0((0,\infty)).
\]
It remains to prove the estimate \eqref{EQ:Lp3}. 
When $\alpha>0$, the estimate \eqref{EQ:Lp3} follows from the estimate \eqref{EQ:Lp2}. 
In fact, we estimate
\begin{equation*}
\begin{split}
\Big\| \mathH^\alpha
\sum^j_{k=-\infty}\phi_k(\sqrt{\mathH})
\Big\|_{L^p (\Omega)\to L^p (\Omega)} 
&\le 
\sum^j_{k=-\infty}
\| \mathH^\alpha
\phi_k(\sqrt{\mathH})
\|_{L^p (\Omega)\to L^p (\Omega)}\\
&\le
C
\sum^j_{k=-\infty}2^{2\alpha k}\\
&\le
C 2^{2\alpha j}.
\end{split}
\end{equation*}
Let us now prove the case when $\alpha=0$. 
It follows from Lemma \ref{lem:lem-L2} 
that  
\begin{equation*}\label{EQ:IMP}
\sum^j_{k=-\infty}\phi_k(\sqrt{\mathH})f
=
\psi(2^{-2j}\mathH)f\quad \text{in }L^2(\Omega)
\end{equation*}
for any $j\in \mathbb Z$ and $f\in L^2(\Omega)$, 
which implies that 
\begin{equation*}
\Big\| 
\sum^j_{k=-\infty}\phi_k(\sqrt{\mathH})g
\Big\|_{L^p(\Omega)}
=
\big\| 
\psi(2^{-2j}\mathH)g
\big\|_{L^p(\Omega)}
\le
C 
\|g\|_{L^p(\Omega)}
\end{equation*}
for any $j\in \mathbb Z$ and $g\in L^p(\Omega)\cap L^2(\Omega)$. 
Thus, when $1 \le p <\infty$, 
the estimate \eqref{EQ:Lp3} for $\alpha=0$ is proved by the density argument, and 
the case $p=\infty$ is obtained from $L^1$-estimate by the duality argument. 
Thus the estimate \eqref{EQ:Lp3} for $\alpha=0$ is proved. 
The proof of Lemma \ref{lem:Lp} is finished.
\end{proof}

Based on the gradient estimate \eqref{EQ:grad-infty} and Proposition \ref{prop:grad}, 
we have the following estimates which play a
crucial role in the proof of Theorem \ref{thm:bilinear}.

\begin{lem}\label{lem:nabla}
Let $1 \le p\le \infty$. Then the following assertions hold{\rm :}
\begin{itemize}
\item[(i)] 
Assume that $\Omega$ is an open set of $\mathbb R^n$ such that  
\eqref{EQ:grad-infty} holds for any $t \in(0,1]$. 
Then for any $m\in\mathbb N\cup\{0\}$ and $\alpha\in\mathbb R$ there exists a constant $C>0$ such that 
\begin{equation}\label{EQ:nabla1}
\|
\nabla \mathH^m \psi(2^{-2j}\mathH)
\|_{L^p(\Omega)\to L^p(\Omega)}
\le C
2^{(2m+1)j},
\end{equation}
\begin{equation}\label{EQ:nabla2}
\|
\nabla \mathH^{\alpha}\phi_j(\sqrt{\mathH})
\|_{L^p(\Omega)\to L^p(\Omega)}
\le C
2^{(2\alpha+1)j}
\end{equation}
for any $j\in \mathbb N$. 
\item[(ii)]
Assume that $\Omega$ is an open set 
of $\mathbb R^n$ such that  
\eqref{EQ:grad-infty} holds for any $t>0$. 
Then 
the estimates \eqref{EQ:nabla1} and \eqref{EQ:nabla2} hold for any $j \in \mathbb Z$. 
Furthermore, for any $\alpha\ge0$ there exists a constant $C>0$ such that
\begin{equation}\label{EQ:nabla3}
\Big\|
\nabla \mathH^\alpha
\sum^j_{k=-\infty}\phi_k(\sqrt{\mathH})
\Big\|_{L^p (\Omega)\to L^p (\Omega)} 
\le C 2^{(2\alpha+1)j}
\end{equation}
for any $j\in\mathbb Z$.
\end{itemize}
\end{lem}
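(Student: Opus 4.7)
The plan is to combine the hypothesized $L^\infty$ gradient estimate \eqref{EQ:grad-infty} with Propositions \ref{prop:Lp} and \ref{prop:grad} via a heat semigroup factorization, dealing with the endpoints $p=2$ and $p=\infty$ separately and filling in $2<p<\infty$ by Riesz--Thorin interpolation.

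For $1\le p\le 2$, I would obtain \eqref{EQ:nabla1} by applying Proposition \ref{prop:grad} with $\theta = 2^{-2j}$ to $\phi(\lambda) := \lambda^m \psi(\lambda) \in C^\infty_0(\mathbb R) \subset \mathscr S(\mathbb R)$, since $\mathcal H^m \psi(2^{-2j}\mathcal H) = 2^{2mj}\phi(2^{-2j}\mathcal H)$. Estimate \eqref{EQ:nabla2} follows in the same way from $\phi(\lambda) := \lambda^\alpha \phi_0(\sqrt\lambda)$, which lies in $C^\infty_0((0,\infty))$ for every $\alpha \in \mathbb R$ because $\phi_0$ vanishes near zero.

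For $p=\infty$, Proposition \ref{prop:grad} is unavailable, so I would extract a heat semigroup factor. Setting $\eta_m(\lambda) := \lambda^m \psi(\lambda)\,e^{\lambda} \in C^\infty_0(\mathbb R)$, the functional calculus gives
\[
\mathcal H^m \psi(2^{-2j}\mathcal H) = 2^{2mj}\,e^{-2^{-2j}\mathcal H}\,\eta_m(2^{-2j}\mathcal H).
\]
Applying $\nabla$ to the heat factor, \eqref{EQ:grad-infty} with $t = 2^{-2j}$ yields $\|\nabla e^{-2^{-2j}\mathcal H}\|_{L^\infty \to L^\infty} \le C 2^{j}$ (valid for $j \in \mathbb N$ in case (i) and for $j \in \mathbb Z$ in case (ii)), while Proposition \ref{prop:Lp} with $p=q=\infty$ gives $\|\eta_m(2^{-2j}\mathcal H)\|_{L^\infty \to L^\infty} \le C$; composing proves \eqref{EQ:nabla1} at $p=\infty$. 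An identical computation with $\eta_\alpha(\lambda) := \lambda^\alpha \phi_0(\sqrt\lambda)\,e^{\lambda} \in C^\infty_0((0,\infty))$ handles \eqref{EQ:nabla2}. For $2 < p < \infty$, Riesz--Thorin interpolation applied componentwise to $\partial_{x_i}\mathcal H^m \psi(2^{-2j}\mathcal H)$ between the bounds just obtained at $L^2$ and $L^\infty$, followed by a density extension, completes the proofs of \eqref{EQ:nabla1} and \eqref{EQ:nabla2}.

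Finally, \eqref{EQ:nabla3} with $\alpha > 0$ follows from subadditivity together with \eqref{EQ:nabla2}:
\[
\Big\|\nabla \mathcal H^\alpha \sum_{k=-\infty}^j \phi_k(\sqrt{\mathcal H})\Big\|_{L^p \to L^p} \le C\sum_{k=-\infty}^j 2^{(2\alpha+1)k} \le C 2^{(2\alpha+1)j},
\]
the convergent geometric tail ensuring well-definedness of the operator. For $\alpha = 0$, Lemma \ref{lem:lem-L2}(i) provides the identity $\sum_{k=-\infty}^j \phi_k(\sqrt{\mathcal H})f = \psi(2^{-2j}\mathcal H)f$ on $L^2(\Omega)$, which extends to $L^p(\Omega)$ for $p < \infty$ by the density argument used in Lemma \ref{lem:Lp}(ii) and to $L^\infty(\Omega)$ by duality, reducing \eqref{EQ:nabla3} to the $m = 0$ case of \eqref{EQ:nabla1}. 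I expect the main obstacle to be precisely the $L^\infty$ endpoint: since Proposition \ref{prop:grad} is unavailable there, the heat factorization is indispensable, and one must verify that the auxiliary multipliers $\eta_m,\eta_\alpha$ remain in a class (here $C^\infty_0$) for which Proposition \ref{prop:Lp} is applicable.
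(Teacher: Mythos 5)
Your argument matches the paper's proof in all essentials: the low-$p$ range (they use $p=1$, you use $1\le p\le 2$) comes from Proposition \ref{prop:grad} with $\theta=2^{-2j}$, the $p=\infty$ endpoint comes from the heat-semigroup factorization $\nabla e^{-2^{-2j}\mathcal H}$ combined with \eqref{EQ:grad-infty} and Proposition \ref{prop:Lp} applied to the compactly supported multiplier $\lambda^m\psi(\lambda)e^\lambda$ (resp.\ $\lambda^\alpha\phi_0(\sqrt\lambda)e^\lambda$), and Riesz--Thorin fills the rest. One small simplification you missed: for \eqref{EQ:nabla3} the exponent $2\alpha+1\ge 1>0$ makes the geometric series $\sum_{k\le j}2^{(2\alpha+1)k}$ convergent even at $\alpha=0$, so the separate identity-based reduction at $\alpha=0$ is unnecessary and the paper just sums the series for all $\alpha\ge 0$.
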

\begin{proof}
We prove the assertion (i). 
The case $p=1$ is an immediate consequence of 
Proposition \ref{prop:grad} for $\theta = 2^{-2j}$, 
since 
\[
\lambda^m\psi(\lambda) \in C^\infty_0(\mathbb R), \quad
\lambda^{\alpha}\phi_0(\sqrt{\lambda}) \in C^\infty_0((0,\infty)).
\]
Hence 
it suffices to show the case $p=\infty$. 
In fact, 
once the case $p=\infty$ is proved, 
the Riesz-Thorin interpolation theorem 
allows us to conclude the estimates \eqref{EQ:nabla1} and \eqref{EQ:nabla2} 
for any $1\le p\le \infty$.

Let $f \in L^\infty(\Omega)$. 
Then it follows from the estimate \eqref{EQ:grad-infty} for $0<t\le 1$ that
\begin{equation}\label{a}
\begin{split}
\left\|
\nabla \mathH^m\psi(2^{-2j}\mathH) f
\right\|_{L^\infty(\Omega)}
&= 
\big\|
\nabla e^{-2^{-2j}\mathH}e^{2^{-2j}\mathH}\mathH^m\psi(2^{-2j}\mathH) f
\big\|_{L^\infty(\Omega)}\\
&\le 
C 2^j
\big\|
e^{2^{-2j}\mathH}\mathH^m\psi(2^{-2j}\mathH) f
\big\|_{L^\infty(\Omega)}\\
&=
C 2^{(2m+1)j}
\big\|
e^{2^{-2j}\mathH}(2^{-2j}\mathH)^m\psi(2^{-2j}\mathH) f
\big\|_{L^\infty(\Omega)}
\end{split}
\end{equation}
for any $j\in\mathbb N$. 
Since
\[
e^{\lambda}\lambda^m\psi(\lambda) \in C^\infty_0(\mathbb R),
\]
it follows from the estimate \eqref{EQ:Lp} for $p=\infty$ in Proposition \ref{prop:Lp} that 
\begin{equation}\label{b}
\big\|
e^{2^{-2j}\mathH}(2^{-2j}\mathH)^m\psi(2^{-2j}\mathH) f
\big\|_{L^\infty(\Omega)}
\le C
\|f\|_{L^\infty(\Omega)}.
\end{equation}
Thus the required estimate \eqref{EQ:nabla1} for $p=\infty$ is an immediate consequence of \eqref{a} and \eqref{b}. 
In a similar way, we get \eqref{EQ:nabla2}. 
Thus the assertion (i) is proved. 

Next we prove the assertion (ii). 
We can prove the estimates \eqref{EQ:nabla1} and \eqref{EQ:nabla2} for any $j \in \mathbb Z$ in the same way as (i).
Furthermore, 
the estimate \eqref{EQ:nabla3} is proved by using \eqref{EQ:nabla2} 
in the same way as the proof of 
\eqref{EQ:Lp2} for $\alpha>0$. 
Hence we may omit the details. 
The proof of Lemma \ref{lem:nabla} is finished.
\end{proof}

\subsection{The Leibniz rule for the Dirichlet Laplacian}
\label{sec:3.2}
In this subsection 
we prove the following lemma.  

\begin{lem}
\label{lem:Leib}
 Assume that $\Omega$ is an open set of 
$\R^n$ such that \eqref{EQ:grad-infty} holds for any $t\in(0,1]$. 
Let 
$\Phi, \Psi \in C^\infty_0(\mathbb R)$.  
Then for any $f,g\in \mathcal X'(\Omega)$,
we have 
\begin{equation}
\label{EQ:Leib}
\begin{split}
&\mathH  \big( \Phi(\mathH)f \cdot \Psi(\mathH)g \big)\\
= &\, 
\mathH  \Phi (\mathH)f\cdot
\Psi(\mathH)g
-
2\nabla \Phi(\mathH)f \cdot \nabla \Psi(\mathH)g
+
\Phi(\mathH)f \cdot\mathH \Psi(\mathH)g
\quad \text{in }\mathcal X'(\Omega).
\end{split}
\end{equation}
\end{lem}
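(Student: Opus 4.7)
The strategy is to verify \eqref{EQ:Leib} by pairing both sides against an arbitrary $\varphi \in \mathcal X(\Omega)$, using the dual-operator definition \eqref{EQ:X'} of $\mathH$ on $\mathcal X'(\Omega)$. The claim then reduces to the classical pointwise Leibniz rule $-\Delta(uv) = -(\Delta u)v - 2\nabla u\cdot\nabla v - u\Delta v$, with $u := \Phi(\mathH)f$ and $v := \Psi(\mathH)g$, combined with two integrations by parts.

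The preparatory step is to establish enough regularity of $u$ and $v$. Since $\Phi, \Psi \in C_0^\infty(\mathbb R)$ have compact spectral support, Lemma \ref{lem:decomposition1} allows me to write $u$ as a finite sum of terms $\Phi(\mathH)\psi(\mathH)f$ and $\Phi(\mathH)\phi_j(\sqrt{\mathH})f$ whose inner factors lie in $L^\infty(\Omega)$. Applying Proposition \ref{prop:Lp} and Lemma \ref{lem:nabla}(i) (which uses the gradient assumption \eqref{EQ:grad-infty}), together with the observation that $\lambda\Phi(\lambda) \in C_0^\infty(\mathbb R)$, yields $u, v, \mathH u, \mathH v, \nabla u, \nabla v \in L^\infty(\Omega)$; moreover $u, v \in \bigcap_k \mathcal D(\mathH^k) \subset H^1_0(\Omega)$.

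For $\varphi \in \mathcal X(\Omega) \subset H^1_0(\Omega) \cap L^1(\Omega) \cap L^2(\Omega)$, definition \eqref{EQ:X'} gives
\begin{equation*}
{}_{\mathcal X'(\Omega)}\langle \mathH(uv), \varphi\rangle_{\mathcal X(\Omega)} = \int_\Omega uv\,\overline{\mathH\varphi}\,dx,
\end{equation*}
the integral being finite since $\mathH\varphi \in L^1(\Omega)$ and $uv \in L^\infty(\Omega)$. Because $uv \in H^1_0(\Omega) \cap L^\infty(\Omega)$ with $\nabla(uv) = v\nabla u + u\nabla v$ (product of two $H^1_0 \cap L^\infty$ factors), integration by parts converts this integral to $\int_\Omega \nabla(uv)\cdot\overline{\nabla\varphi}\,dx$. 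Expanding $\nabla(uv)$ and integrating by parts once more, treating $\bar v\varphi$ and $\bar u\varphi$ as admissible $H^1_0 \cap L^2$ test functions in the quadratic form of $\mathH$ on $u, v \in \mathcal D(\mathH)$, yields exactly
\begin{equation*}
\int_\Omega \bigl[(\mathH u)v - 2\nabla u\cdot\nabla v + u(\mathH v)\bigr]\overline{\varphi}\,dx,
\end{equation*}
which is the pairing of the right-hand side of \eqref{EQ:Leib} against $\varphi$ (each term being in $L^\infty(\Omega) \hookrightarrow \mathcal X'(\Omega)$).

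The hard part is justifying the second integration by parts: it demands $\bar v\varphi, \bar u\varphi \in H^1_0(\Omega)$, which hinges on the vanishing boundary trace of $u, v$ and on the uniform gradient bounds of Lemma \ref{lem:nabla}—available precisely because of the assumption \eqref{EQ:grad-infty}. Without \eqref{EQ:grad-infty} one loses pointwise control of the cross term $\nabla u\cdot\nabla v$, and even the pointwise Leibniz rule behind the whole computation would need to be reinterpreted in a strictly weaker distributional sense.
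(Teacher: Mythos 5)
Your overall architecture (pair against a test function, move $\mathH$ across, apply the pointwise Leibniz rule, move the remaining Laplacians back) is reasonable, and you correctly identify that the gradient hypothesis \eqref{EQ:grad-infty} is what puts $\nabla\Phi(\mathH)f$ and $\nabla\Psi(\mathH)g$ in $L^\infty(\Omega)$. But there is a genuine gap in the regularity step that your integrations by parts rest on.

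You claim $u=\Phi(\mathH)f,\; v=\Psi(\mathH)g \in \bigcap_k \mathcal D(\mathH^k)\subset H^1_0(\Omega)$, and then run the whole argument through the quadratic form of $\mathH$ on $H^1_0(\Omega)$, treating $uv$, $\bar v\varphi$, $\bar u\varphi$ as admissible $H^1_0\cap L^2$ elements. This is not justified for general $f,g\in\mathcal X'(\Omega)$: Lemma \ref{lem:decomposition1} only gives $\Phi(\mathH)f\in L^\infty(\Omega)$, and when $\Omega$ is unbounded (e.g.\ $\R^n$, $\R^n_+$, an exterior domain) $L^\infty(\Omega)\not\subset L^2(\Omega)$. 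Indeed, if $\Phi(0)\ne 0$ and $f$ is a bounded non-decaying element of $\mathcal X'(\Omega)$, then $u=\Phi(\mathH)f$ need not lie in $L^2(\Omega)$, hence not in $\mathcal D(\mathH)\subset H^1_0(\Omega)$, and in fact the lemma is used in the main proof precisely in this regime (with $\Psi$ a low-frequency cutoff $\psi(2^{-2k}\cdot)$ and $g\in\dot B^s_{p_4,q}(\mathH)\cap L^{p_2}(\Omega)$, neither of which forces $L^2$ membership). Once $uv\notin L^2(\Omega)$, the displayed identity $\int uv\,\overline{\mathH\varphi}=\int\nabla(uv)\cdot\overline{\nabla\varphi}$ over all of $\Omega$ is not an instance of the quadratic-form identity, and the "boundary terms at infinity" you are implicitly discarding are not controlled.

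The paper sidesteps this by never working globally: it first notes that every term in \eqref{EQ:Leib} is identified with an $L^\infty(\Omega)$ function (via Proposition \ref{prop:Lp} and Lemma \ref{lem:nabla}), so it suffices to verify the identity in $\mathscr D'(\Omega)$. Pairing against $h\in\mathscr D(\Omega)$ localizes every integration by parts to the compact support of $h$; only local integrability is needed, which the $L^\infty$ bounds furnish immediately, and no global $H^1_0$ or $L^2$ membership of $\Phi(\mathH)f$, $\Psi(\mathH)g$ is ever invoked. If you want to rescue your version, you would have to either restrict to $\varphi$ with compact support (effectively reproducing the paper's reduction) or insert a cutoff/truncation and control the error, rather than asserting $u,v\in H^1_0(\Omega)$.
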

\begin{proof}
To begin with, 
we note from Lemma \ref{lem:decomposition1} that 
$\Phi(\mathH)f$ and $\Psi(\mathH)g$ are regarded as elements in $L^\infty(\Omega)$:
\begin{equation}
\label{EQ:Linfty}
\Phi(\mathH)f, \, \Psi(\mathH)g\in L^\infty(\Omega).
\end{equation}
Noting that the assumption \eqref{EQ:grad-infty} is necessary for 
Lemma \ref{lem:nabla}, we  apply Lemmas \ref{lem:Lp} and \ref{lem:nabla} for $p=\infty$.
Then we see that
\begin{equation}
\label{EQ:g-Linfty}
\mathH\Phi(\mathH)f, \, \mathH\Psi(\mathH)g, \, \nabla \Phi(\mathH)f,  \, \nabla \Psi(\mathH)g
\in L^\infty(\Omega).
\end{equation}
Hence, all terms on the right hand side of \eqref{EQ:Leib}
belong to $L^\infty(\Omega)$.
Therefore, it suffices to show that 
\eqref{EQ:Leib} holds in $\mathscr{D}^\prime(\Omega)$, where 
$\mathscr{D}^\prime(\Omega)$ is the space consisting of distributions on $\Omega$, 
i.e., the dual space of $\mathscr{D}(\Omega)$.
In fact, if \eqref{EQ:Leib} holds in $\mathscr D'(\Omega)$, then 
\eqref{EQ:Leib} holds 
almost everywhere on 
$\Omega$.  
Thus we conclude that \eqref{EQ:Leib} holds 
in $\mathcal{X}^\prime(\Omega)$. 

Since 
\[
\mathcal{H}h=-\Delta h 
\quad \text{for $h\in \mathscr D(\Omega)$,}
\]
we write, by using \eqref{EQ:Linfty},
\begin{equation}
\label{EQ:a}
\begin{split}
{}_{\mathscr D'(\Omega)}\langle 
\mathH \left(\Phi(\mathH)f \cdot 
\Psi(\mathH)g\right),h
\rangle_{\mathscr D(\Omega)}
= &
{}_{L^\infty(\Omega)}\langle 
\Psi(\mathH)g,
\overline{\Phi(\mathH)f} (-\Delta h)
\rangle_{L^1(\Omega)}
\end{split}
\end{equation}
for any $h\in \mathscr{D}(\Omega)$. 
Here, noting from the definition \eqref{EQ:X'} of 
$\mathH$ that
\[
-\Delta \Phi(\mathcal{H})f=\mathcal{H}\Phi(\mathcal{H})f\quad \text{in }\mathscr D'(\Omega),
\]
we observe from 
the Leibniz rule that
\begin{equation}
\label{EQ:Leib3}
\overline{\Phi(\mathH)f}(-\Delta h)
=-\Delta(\overline{\Phi(\mathH)f}\cdot h)-(\overline{\mathH  \Phi(\mathH)f})h
+2\overline{\nabla \Phi(\mathH)f}\cdot\nabla h
\quad \text{in $\mathscr D'(\Omega)$.}
\end{equation}
Since all the terms in \eqref{EQ:Leib3} belong to $L^1(\Omega)$ by \eqref{EQ:Linfty} 
and \eqref{EQ:g-Linfty}, 
multiplying \eqref{EQ:Leib3} by 
$\Psi(\mathH)g$, and using \eqref{EQ:a},
we write 
\begin{equation}
\label{EQ:b}
\begin{split}
&
{}_{\mathscr D'(\Omega)}\langle 
\mathH \left(\Phi(\mathH)f \cdot\Psi(\mathH)g
\right),
h
\rangle_{\mathscr D(\Omega)}\\
= &\,
{}_{L^\infty(\Omega)}\langle 
\Psi(\mathH)g,
-\Delta(\overline{\Phi(\mathH)f}\cdot h)
\rangle_{L^1(\Omega)}\\
& \quad - 
{}_{L^\infty(\Omega)}\langle
(\mathH \Phi(\mathH)f)\Psi(\mathH)g, h
\rangle_{L^1(\Omega)}
+2
{}_{L^\infty(\Omega)}\langle
\Psi(\mathH)g,
\overline{\nabla \Phi(\mathH)f}\cdot\nabla h
\rangle_{L^1(\Omega)}.
\end{split}
\end{equation}
As to the first term in the right member of \eqref{EQ:b}, integrating by parts, we get
\begin{equation*}
\begin{split}
{}_{L^\infty(\Omega)}\langle 
\Psi(\mathH)g,
-\Delta(\overline{\Phi(\mathH)f}\cdot h)
\rangle_{L^1(\Omega)}
= 
{}_{L^\infty(\Omega)}\langle 
-\Delta \Psi(\mathH)g,
\overline{\Phi(\mathH)f}\cdot h
\rangle_{L^1(\Omega)}. 
\end{split}
\end{equation*}
Here, we note that 
\begin{equation}
\label{EQ:c}
-\Delta \Psi(\mathH)g 
= \mathH \Psi(\mathH)g
\quad \text{in }\mathscr D'(\Omega).
\end{equation}
Since $\mathH \Psi(\mathH)g$ belongs to $L^\infty(\Omega)$ 
by \eqref{EQ:Linfty} and 
Lemma \ref{lem:Lp} for $p=\infty$, 
the identity \eqref{EQ:c} holds  
almost everywhere on $\Omega$. 
Hence we have
\begin{equation*}
{}_{L^\infty(\Omega)}\langle 
-\Delta \Psi(\mathH)g,
\overline{\Phi(\mathH)f}\cdot h
\rangle_{L^1(\Omega)}
=
{}_{L^\infty(\Omega)}\langle 
\Phi(\mathH)f \cdot \mathH \Psi(\mathH)g,
h
\rangle_{L^1(\Omega)},
\end{equation*}
since $\overline{\Phi(\mathH)f}\cdot 
h \in L^1(\Omega)$. 
Therefore, the first term is written as 
\begin{equation*}
\label{EQ:d}
{}_{L^\infty(\Omega)}\langle 
\Psi(\mathH)g,
-\Delta (\overline{\Phi(\mathH)f}\cdot 
h)
\rangle_{L^1(\Omega)}
=
{}_{L^\infty(\Omega)}\langle 
\Phi(\mathH)f\cdot \mathH \Psi(\mathH)g,
h
\rangle_{L^1(\Omega)}.
\end{equation*}
In a similar way, the third term 
in the right member of \eqref{EQ:b}
is written as 
\begin{equation}\label{EQ:Last}
\begin{split}
&{}_{L^\infty(\Omega)}\langle 
\Psi(\mathH)g,
\overline{\nabla \Phi(\mathH)f}\cdot\nabla h
\rangle_{L^1(\Omega)}\\
= &
-{}_{\mathscr D'(\Omega)}\langle 
\Delta \Phi(\mathH)f \cdot 
\Psi(\mathH)g,h
\rangle_{\mathscr D(\Omega)}
-{}_{\mathscr D'(\Omega)}\langle 
\nabla\Phi(\mathH)f \cdot \nabla\Psi(\mathH)g,
h
\rangle_{\mathscr D(\Omega)}\\
= &
{}_{\mathscr D'(\Omega)}\langle 
\mathH \phi(\mathH)f \cdot 
\Psi(\mathH)g,
h
\rangle_{\mathscr D(\Omega)}
-
{}_{\mathscr D'(\Omega)}\langle 
\nabla \Phi(\mathH)f\cdot\nabla \Psi(\mathH)g,
h
\rangle_{\mathscr D(\Omega)}.
\end{split}
\end{equation}
Therefore, summarizing \eqref{EQ:b} and 
\eqref{EQ:Last}, 
we conclude that \eqref{EQ:Leib} holds 
in $\mathscr{D}^\prime(\Omega)$. 
The proof of Lemma \ref{lem:Leib} is finished.
\end{proof}

\subsection{Properties of the space ${\mathcal P}(\Omega)$}
\label{sec:3.3}

In this subsection we shall study several 
properties of a space ${\mathcal P}(\Omega)$, which is defined by
\begin{equation}
\label{EQ:P}
{\mathcal P}(\Omega)
:=
\left\{f\in{\mathcal X}'(\Omega)\, \Big|\, 
{}_{{\mathcal Z}'(\Omega)}\langle 
f, g
\rangle_{{\mathcal Z}(\Omega)}
=
0 
\text{ for any $g\in{\mathcal Z}(\Omega)$}
\right\}.
\end{equation}

\vspace{5mm}

We recall that ${\mathcal X}'(\Omega)$ and $\mathcal Z'(\Omega)$ 
correspond to  
$\mathscr S'(\mathbb R^n)$ and $\mathscr S'_0(\mathbb R^n)$, respectively. 
Here $\mathscr S'_0(\mathbb R^n)$ is the dual space of $\mathscr S_0(\mathbb R^n)$ defined by 
\[
\mathscr S_0(\mathbb R^n)
:=
\left\{ f\in \mathscr S(\mathbb R^n)\,\Big|\,
\int_{\mathbb R^n} x^\alpha f(x)\,dx = 0
\quad \text{for any $\alpha \in (\mathbb N \cup \{0\})^n$}\right\}
\]
endowed with the induced topology of $\mathscr S(\mathbb R^n)$. 
It is known that $\mathscr S'_0(\mathbb R^n)$ is characterized by the quotient space of $\mathscr S'(\mathbb R^n)$ modulo polynomials, i.e., 
\begin{equation*} 
\mathscr S'_0(\mathbb R^n) \cong 
\mathscr S'(\mathbb R^n) / \mathcal P,
\end{equation*}
where $\mathcal P$ is the set of all polynomials of $n$ real variables (see, e.g.,  Proposition 1.1.3 from Grafakos \cite{Grafakos_2014}). 
As to the space $\mathcal P (\Omega)$, 
it is readily checked that $\mathcal P (\Omega)$ is a closed subspace of $\mathcal X'(\Omega)$, and hence, we can apply Theorem in p.126 from Schaefer \cite{Sch_1971} and Propositions 35.5 and 35.6 from Tr\`eves \cite{Tre_1967} to obtain 
the isomorphism:
\[
\mathcal Z'(\Omega) \cong \mathcal X'(\Omega)/\mathcal P(\Omega)
\]
(cf. Theorem 1.1 from Sawano \cite{Saw-2016}).\\

We shall prove the following:
\begin{lem}
\label{lem:P}
Let $\Omega$ be an open set of $\R^n$.  
Then the space $\mathcal P(\Omega)$ enjoys the following{\rm :}
\begin{itemize}
\item[(i)] Let $f \in \mathcal X'(\Omega)$. 
Then the following assertions are equivalent{\rm :}
\begin{itemize}
\item[(a)] $f\in \mathcal P(\Omega)${\rm ;}
\item[(b)] 
$\phi_j(\sqrt{\mathH})f = 0$ in $\mathcal X'(\Omega)$ for any $j\in \mathbb Z${\rm ;}
\item[(c)] 
$\|f\|_{\dot{B}^s_{p,q}(\mathH)} = 0$ for 
any $s \in \mathbb R$ and $1 \le p,q \le \infty$.
\end{itemize}
\item[(ii)] 
$\mathcal P(\Omega)$ is a subspace of $L^\infty(\Omega)$. 
\item[(iii)] In particular, if $\Omega$ is 
a domain such that \eqref{EQ:grad-infty} holds for any $t>0$, then
\[
\mathcal P(\Omega) = 
\text{either} \quad 
\{0\} \quad \text{or}\quad \{f = c \text{ on }\Omega \,|\, c \in \mathbb C \}.
\]
\end{itemize}
\end{lem}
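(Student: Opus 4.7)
The plan is to prove (i), (ii), (iii) in order, using the Littlewood--Paley identities of Lemma \ref{lem:decomposition1} and Lemma \ref{lem:lem-L2} together with the spectral multiplier bounds of \S\ref{sec:3.1}.

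For (i), I would establish the cycle (a)$\Rightarrow$(b)$\Rightarrow$(c)$\Rightarrow$(a). The key observation for (a)$\Rightarrow$(b) is that $\phi_j(\sqrt{\mathH})$ sends $\mathcal X(\Omega)$ into $\mathcal Z(\Omega)$: for $g\in\mathcal X(\Omega)$ the operator $\phi_k(\sqrt{\mathH})\phi_j(\sqrt{\mathH})=(\phi_k\phi_j)(\sqrt{\mathH})$ annihilates $g$ for all but finitely many $k$ by disjointness of the supports of $\phi_k$ and $\phi_j$, so each semi-norm $q_M(\phi_j(\sqrt{\mathH}) g)$ is finite. The self-adjointness built into the definition of the dual operator then gives
\[
{}_{\mathcal X'(\Omega)}\langle \phi_j(\sqrt{\mathH})f,g\rangle_{\mathcal X(\Omega)}
={}_{\mathcal X'(\Omega)}\langle f,\phi_j(\sqrt{\mathH})g\rangle_{\mathcal X(\Omega)}=0.
\]
For (b)$\Rightarrow$(c), Lemma \ref{lem:decomposition1} identifies $\phi_j(\sqrt{\mathH})f$ with an $L^\infty(\Omega)$ function; vanishing as an element of $\mathcal X'(\Omega)$, when tested against $C^\infty_0(\Omega)\subset\mathcal X(\Omega)$, forces this function to vanish almost everywhere, so $\|\phi_j(\sqrt{\mathH}) f\|_{L^p(\Omega)}=0$ for every $j$, $p$, $q$, and hence $\|f\|_{\dot B^s_{p,q}(\mathH)}=0$. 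Finally, (c)$\Rightarrow$(a) follows by decomposing $g=\sum_{j\in\mathbb Z}\phi_j(\sqrt{\mathH})g$ in $\mathcal Z(\Omega)$ via Lemma \ref{lem:decomposition1}(ii), so that $\langle f,g\rangle=\sum_j\langle f,\phi_j(\sqrt{\mathH})g\rangle=\sum_j\langle\phi_j(\sqrt{\mathH})f,g\rangle=0$.

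For (ii), given $f\in\mathcal P(\Omega)$, part (i) yields $\phi_j(\sqrt{\mathH})f=0$ for every $j\ge1$, so the inhomogeneous decomposition \eqref{907-1} collapses to $f=\psi(\mathH)f$ in $\mathcal X'(\Omega)$; since $\psi(\mathH)f\in L^\infty(\Omega)$ by Lemma \ref{lem:decomposition1}(i), it follows that $f\in L^\infty(\Omega)$.

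For (iii), the idea is to iterate (ii) at every dyadic scale. For arbitrary $j\in\mathbb Z$ and $g\in\mathcal X(\Omega)\subset L^2(\Omega)$, Lemma \ref{lem:lem-L2}(i) yields
\[
g=\psi(2^{-2j}\mathH)g+\sum_{k=j+1}^{\infty}\phi_k(\sqrt{\mathH})g\quad\text{in }L^2(\Omega),
\]
and by rewriting the tail as the convergent series \eqref{907-1} of Lemma \ref{lem:decomposition1}(i) minus a finite remainder, one sees that this identity also holds in $\mathcal X(\Omega)$. Pairing with $f\in\mathcal P(\Omega)$ and using $\phi_k(\sqrt{\mathH})f=0$ from (i), we obtain $f=\psi(2^{-2j}\mathH)f$ in $\mathcal X'(\Omega)$, hence almost everywhere, for every $j\in\mathbb Z$. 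Since the gradient estimate \eqref{EQ:grad-infty} now holds for all $t>0$, the bound \eqref{EQ:nabla1} of Lemma \ref{lem:nabla}(ii) with $m=0$ and $p=\infty$ gives
\[
\|\nabla f\|_{L^\infty(\Omega)}=\|\nabla\psi(2^{-2j}\mathH)f\|_{L^\infty(\Omega)}\le C\,2^{j}\|f\|_{L^\infty(\Omega)}\longrightarrow 0\ \text{as }j\to-\infty,
\]
so $\nabla f=0$ in $\mathscr D'(\Omega)$ and $f$ must be constant on the connected open set $\Omega$; this yields the stated dichotomy.

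The main obstacle I anticipate is precisely the promotion of the $L^2$-identity of Lemma \ref{lem:lem-L2}(i) to a relation in $\mathcal X'(\Omega)$ valid for \emph{every} $j\in\mathbb Z$, including very negative $j$: one must verify that the dyadic tail $\sum_{k>j}\phi_k(\sqrt{\mathH})g$ converges in the topology of $\mathcal X(\Omega)$ so that the $\mathcal X'$--$\mathcal X$ pairing with $f$ can be interchanged with the infinite sum. Once this technical point is settled, the rest of the proof is a direct assembly of Lemmas \ref{lem:decomposition1}--\ref{lem:nabla}.
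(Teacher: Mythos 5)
Your proposal is correct and follows essentially the same route as the paper: part~(i) via $\phi_j(\sqrt{\mathH}):\mathcal X(\Omega)\to\mathcal Z(\Omega)$ and the Littlewood--Paley identities, part~(ii) by collapsing the inhomogeneous decomposition \eqref{907-1} to $f=\psi(\mathH)f$, and part~(iii) by showing $f=\psi(2^{-2j}\mathH)f$ at every scale and sending $j\to-\infty$ in the gradient bound \eqref{EQ:nabla1}. The technical point you flag at the end---upgrading the $L^2$-identity of Lemma~\ref{lem:lem-L2}(i) to $\mathcal X(\Omega)$ so that the pairing with $f$ can be passed through the tail---is exactly what the paper covers with the terse phrase ``again by using the argument in \eqref{EQ:FF}'', and your resolution (the tail differs from the $j=0$ series of Lemma~\ref{lem:decomposition1}(i) by finitely many $\phi_k(\sqrt{\mathH})g$, each landing in $\mathcal Z(\Omega)\subset\mathcal X(\Omega)$) is sound.
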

\begin{proof}
We prove the assertion (i). It is readily 
seen from the definition of 
$\dot{B}^s_{p,q}(\mathH)$
that (c) implies (b), since 
\[
\text{$\phi_j(\sqrt{\mathH})f=0$ \quad in 
$L^p(\Omega)$}
\]
for any $j\in \mathbb{Z}$, and since 
$L^p(\Omega)\hookrightarrow \mathcal X^\prime(\Omega)$.
Conversely, we suppose that (b) holds. 
Since 
$f\in \mathcal{X}^\prime(\Omega)$, 
it follows from part (i) of Lemma \ref{lem:decomposition1}
that 
\[
\phi_j(\sqrt{\mathH})f\in L^\infty(\Omega)
\]
for any $j\in \mathbb{Z}$.
Hence, thanks to fundamental lemma of the 
calculus of variations, we deduce that
\[
\phi_j(\sqrt{\mathH})f(x)=0 \quad 
\text{a.e. $x\in \Omega$}
\]
for any $j\in \mathbb Z$.
which implies that (c) holds true. 

We have to prove that
(a) and (b) are equivalent. Suppose that 
(a) holds, i.e., $f \in \mathcal P(\Omega)$. 
We note that if $g\in\mathcal X(\Omega)$,
then 
\begin{equation}\label{EQ:11-KK}
\text{$\phi_j(\sqrt{\mathH})g\in \mathcal{Z}(\Omega)$ \quad 
for any $j\in \mathbb Z$. }
\end{equation}
In fact, fixing $j\in \mathbb{Z}$, we have 
\[
\phi_k(\sqrt{\mathcal{H}}) \phi_j(\sqrt{\mathcal{H}})g\ne 0 
\]
only if $k=j-1,j,j+1$. Then, by using 
Proposition \ref{prop:Lp},
we deduce that for any 
$M\in \mathbb{N}$,
\begin{equation*}
\begin{split}
\sup_{k \le 0}2^{-Mk}\|\phi_k(\sqrt{\mathH})\phi_j(\sqrt{\mathH})g\|_{L^1(\Omega)}
\le &\,
C\max_{k=j-1,j,j+1}2^{-Mk}\|\phi_j(\sqrt{\mathH})g\|_{L^1(\Omega)}\\
\le &\,
C2^{-Mj}\|\phi_j(\sqrt{\mathH})g\|_{L^1(\Omega)}\\
\le & \,
C2^{-Mj}\|g\|_{L^1(\Omega)}\\
<&\infty,
\end{split}
\end{equation*}
which implies \eqref{EQ:11-KK}.
Since $f\in \mathcal{P}(\Omega)$,
thanks to \eqref{EQ:11-KK}, 
it follows that
\[
{}_{\mathcal X' (\Omega)}\langle 
\phi_j(\sqrt{\mathH})f, g
\rangle_{\mathcal X(\Omega)}
=
{}_{\mathcal Z'(\Omega)}\langle 
f, \phi_j(\sqrt{\mathH})g
\rangle_{\mathcal Z(\Omega)}
=
0
\]
for any $j\in \mathbb{Z}$ and $g\in\mathcal X(\Omega)$, 
which implies (b). 
Conversely, let us suppose that 
(b) holds. Since $\mathcal{Z}(\Omega)\subset 
\mathcal{X}(\Omega)$, it follows that 
\begin{equation}\label{EQ:z}
{}_{\mathcal Z'(\Omega)}\langle \phi_j(\sqrt{\mathH})f,g\rangle_{\mathcal Z(\Omega)}=
{}_{\mathcal X'(\Omega)}\langle \phi_j(\sqrt{\mathH})f,g\rangle_{\mathcal X(\Omega)}= 0
\end{equation}
for any $j\in \mathbb{Z}$ and 
$g\in \mathcal{Z}(\Omega)$.
Here, we recall part (ii) of Lemma 
\ref{lem:decomposition1} that 
\[
f=\sum_{j=-\infty}^\infty \phi_j(\sqrt{\mathcal{H}})f
\quad \text{in $\mathcal{Z}'(\Omega)$}.
\]
Then, by using this identity and \eqref{EQ:z}, we have 
\[
{}_{\mathcal Z'(\Omega)}\langle 
f, g
\rangle_{\mathcal Z(\Omega)}
=
\sum_{j=-\infty}^\infty {}_{\mathcal Z'(\Omega)}\langle 
\phi_j(\sqrt{\mathcal{H}})f, g
\rangle_{\mathcal Z(\Omega)}
=
0
\]
for any $g\in\mathcal Z(\Omega)$, 
which implies that $f\in \mathcal{P}(\Omega)$.
Hence (a) holds true. 
Thus we conclude the assertion (i).

Next we prove the assertion (ii). 
Let $f \in \mathcal P(\Omega)$. 
It follows from \eqref{907-1} in Lemma 
\ref{lem:decomposition1} that  
\[
f = \psi(\mathH)f +\sum_{j=1}^\infty
\phi_j(\sqrt{\mathcal{H}})f 
\quad \text{in $\mathcal X'(\Omega)$}. 
\]
Applying (b) in the assertion (i) to the 
second term in the right member, we get 
\begin{equation}\label{EQ:FF}
f =\psi(\mathH)f 
\quad \text{in $\mathcal X'(\Omega)$}. 
\end{equation}
Since $\psi(\mathH)f \in L^\infty(\Omega)$ 
by the assertion
(i) in Lemma \ref{lem:decomposition1}, 
we conclude that $f \in L^\infty(\Omega)$. 
Therefore, the assertion (ii) is proved. 

Finally we show the assertion (iii). 
Let $f\in\mathcal P(\Omega)$. Then, 
again by using the argument in \eqref{EQ:FF}, 
we see that
\begin{equation}\label{EQ:GG}
f = \psi(2^{-2k}\mathH)f +\sum_{j=k}^\infty
\phi_j(\sqrt{\mathcal{H}})f= 
\psi(2^{-2k}\mathH)f
\quad \text{in $\mathcal X'(\Omega)$} 
\end{equation}
for any $k\in\mathbb Z$. 
Since the gradient estimate \eqref{EQ:grad-infty} holds for $t=2^{-2k}$, applying 
\eqref{EQ:nabla1} from Lemma \ref{lem:nabla}
to the last member in \eqref{EQ:GG}, we get 
\begin{equation*}
\begin{split}
\|\nabla f\|_{L^\infty(\Omega)}
= &
\|\nabla \psi(2^{-2k}\mathH)f\|_{L^\infty(\Omega)}\\
\le & C 
2^k\|f\|_{L^\infty(\Omega)}
\end{split}
\end{equation*}
for any $k\in\mathbb Z$, which 
implies that $\nabla f = 0$ in $\Omega$. 
Since $\Omega$ is connected, 
$f$ is a constant in $\Omega$. 
Summarizing the above argument, we deduce 
that 
\[
\{0\}\subset \mathcal P(\Omega)
\subset \{f = c \text{ on }\Omega \,|\, c \in \mathbb C \}.
\]
Since $\mathcal P(\Omega)$ is a linear space, we conclude that if $\mathcal P(\Omega)\ne\{0\}$, then $\mathcal P(\Omega)$ is the space
of all constant functions on $\Omega$. 
This proves (iii). 
The proof of 
Lemma \ref{lem:P} is finished. 
\end{proof}

\subsection{A lemma on convergence in Besov spaces}
In this subsection we shall prove 
a lemma in Besov spaces, which is useful in 
the proof of the theorem.  
\begin{lem}\label{lem:Fatou}
Let $\Omega$ be an open set of $\R^n$,
and let $s \in \mathbb R$ and $1 \le p,q \le \infty$. 
Assume that $\{f_N\}_{N\in\mathbb N}$ is a 
bounded sequence in $\dot{B}^s_{p,q}(\mathH)$, and that 
there exists an $f \in \mathcal X'(\Omega)$ such that 
\begin{equation}\label{EQ:convergence}
f_N \to f \quad \text{in $\mathcal X'(\Omega)$ as $N\to\infty$}.
\end{equation}
Then $f \in \dot{B}^s_{p,q}(\mathH)$ and 
\begin{equation}\label{EQ:Fatou}
\|f\|_{\dot{B}^s_{p,q}(\mathH)}
\le \liminf_{N\to \infty}\|f_N\|_{\dot{B}^s_{p,q}(\mathH)}.
\end{equation}
\end{lem}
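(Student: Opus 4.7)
The plan is to reduce the Fatou-type bound \eqref{EQ:Fatou} to a lower semicontinuity statement for each Littlewood--Paley piece in $L^p(\Omega)$, and then to assemble by the classical Fatou lemma for the counting measure on $\mathbb Z$. The passage from the distributional convergence \eqref{EQ:convergence} to an $L^p$-bound is carried out via duality against test functions in $C^\infty_0(\Omega)$, which treats all values $1\le p,q\le \infty$ uniformly, including the endpoints.

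First I would fix $j\in\mathbb Z$. Since $\phi_j(\sqrt{\mathH})$ maps $\mathcal X(\Omega)$ continuously into itself, the definition of the dual operator \eqref{EQ:X'} together with \eqref{EQ:convergence} yields
\[
{}_{\mathcal X'(\Omega)}\big\langle \phi_j(\sqrt{\mathH})f,g\big\rangle_{\mathcal X(\Omega)}
=\lim_{N\to\infty}{}_{\mathcal X'(\Omega)}\big\langle \phi_j(\sqrt{\mathH})f_N,g\big\rangle_{\mathcal X(\Omega)}
\]
for every $g\in\mathcal X(\Omega)$. Regarding $f\in\mathcal X'(\Omega)$ as an element of $\mathcal Z'(\Omega)$ by restriction, Lemma \ref{lem:decomposition1}(ii) gives $\phi_j(\sqrt{\mathH})f\in L^\infty(\Omega)\subset L^1_{\mathrm{loc}}(\Omega)$. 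Next I would verify the inclusion $C^\infty_0(\Omega)\subset\mathcal X(\Omega)$: any $g\in C^\infty_0(\Omega)$ lies in $\mathcal D(\mathH^M)$ with $\mathH^M g=(-\Delta)^M g\in C^\infty_0(\Omega)\subset L^1(\Omega)\cap\mathcal D(\mathH)$ for every $M\in\mathbb N$, and the semi-norm $p_M(g)$ is finite thanks to Proposition \ref{prop:Lp}.

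For $g\in C^\infty_0(\Omega)$, the identification rule for the duality pairing reduces both sides of the display above to ordinary Lebesgue integrals (using $\phi_j(\sqrt{\mathH})f_N\in L^p(\Omega)\subset L^1(\Omega)+L^\infty(\Omega)$), so H\"older's inequality delivers
\[
\Big|\int_\Omega \phi_j(\sqrt{\mathH})f\cdot\overline{g}\,dx\Big|
\le \liminf_{N\to\infty}\|\phi_j(\sqrt{\mathH})f_N\|_{L^p(\Omega)}\,\|g\|_{L^{p'}(\Omega)},
\]
where $p'$ denotes the H\"older conjugate of $p$. Taking the supremum over $g\in C^\infty_0(\Omega)$ with $\|g\|_{L^{p'}(\Omega)}\le 1$ and invoking the standard identity
\[
\|u\|_{L^p(\Omega)}=\sup\Big\{\Big|\int_\Omega u\,\overline{g}\,dx\Big|:g\in C^\infty_0(\Omega),\ \|g\|_{L^{p'}(\Omega)}\le 1\Big\}
\qquad (u\in L^1_{\mathrm{loc}}(\Omega),\ 1\le p\le\infty)
\]
one obtains
\[
\|\phi_j(\sqrt{\mathH})f\|_{L^p(\Omega)}\le \liminf_{N\to\infty}\|\phi_j(\sqrt{\mathH})f_N\|_{L^p(\Omega)}\quad\text{for every }j\in\mathbb Z.
\]
Multiplying by $2^{sj}$ and applying Fatou's lemma for the counting measure on $\mathbb Z$ in the $\ell^q$-norm (for all $1\le q\le\infty$) then yields \eqref{EQ:Fatou}, whose right-hand side is finite by hypothesis, so $f\in\dot B^s_{p,q}(\mathH)$.

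The main obstacle I anticipate lies in the uniform treatment of the endpoint $p=1$. A weak-$*$ compactness argument in $L^p$ would proceed smoothly for $1<p\le\infty$, but at $p=1$ the non-reflexivity of $L^1$ forces one to capture only a Radon measure limit, which would then need to be identified with the distributional object $\phi_j(\sqrt{\mathH})f$. The duality route via $C^\infty_0(\Omega)\subset\mathcal X(\Omega)$ above side-steps this difficulty at the cost of verifying the sup-characterization of $\|\cdot\|_{L^p}$ via $C^\infty_0$ at the endpoints $p=1$ and $p=\infty$, both of which are standard.
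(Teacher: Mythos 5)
Your argument is correct, and it takes a genuinely different route from the paper's. The paper first establishes pointwise almost-everywhere convergence $\phi_j(\sqrt{\mathcal H})f_N(x)\to\phi_j(\sqrt{\mathcal H})f(x)$ by writing $\phi_j(\sqrt{\mathcal H})f_N(x)$ as the duality pairing of $\Phi_j(\sqrt{\mathcal H})f_N$ (where $\Phi_j=\phi_{j-1}+\phi_j+\phi_{j+1}$) against the kernel $\phi_j(\sqrt{\mathcal H})(x,\cdot)$, which is shown to lie in $\mathcal X(\Omega)$ via a separate kernel lemma (Lemma \ref{lem:kernel}); it then invokes the classical Fatou lemma on $\Omega$ for $1\le p<\infty$ and a boundedness argument for $p=\infty$. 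You bypass the kernel lemma entirely: you pair $\phi_j(\sqrt{\mathcal H})f_N$ against $C^\infty_0(\Omega)$ test functions, pass to the limit using the dual-operator identity \eqref{EQ:X'}, and recover the $L^p$ lower semicontinuity from the sup-characterization of $\|\cdot\|_{L^p(\Omega)}$ over $\{g\in C^\infty_0(\Omega):\|g\|_{L^{p'}(\Omega)}\le1\}$. Your route is arguably leaner — it avoids introducing the kernel machinery, and it handles all $1\le p\le\infty$ uniformly without splitting cases — at the cost of having to verify $C^\infty_0(\Omega)\subset\mathcal X(\Omega)$ and the $C^\infty_0$ dual characterization of $L^p$ for $u\in L^1_{\mathrm{loc}}$ at both endpoints $p=1$ and $p=\infty$, both of which you correctly note are standard. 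Two small refinements: the finiteness of $p_M(g)$ for $g\in C^\infty_0(\Omega)$ uses not just Proposition \ref{prop:Lp} but the identity $\phi_j(\sqrt{\mathcal H})g=\mathcal H^{-M'}\phi_j(\sqrt{\mathcal H})\,\mathcal H^{M'}g$ together with Lemma \ref{lem:Lp}(ii) to extract the $2^{-2M'j}$ decay; and the fact that $\phi_j(\sqrt{\mathcal H})f\in L^\infty(\Omega)$ follows directly from Lemma \ref{lem:decomposition1}(i) applied to $f\in\mathcal X'(\Omega)$, so there is no need to pass through $\mathcal Z'(\Omega)$.
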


Before going to the proof, 
let us give a remark on the idea of 
proof of the lemma. 
When $1 < p,q < \infty$, 
$\dot{B}^s_{p,q}(\mathH)$ are reflexive for any $s\in \R$. 
This fact and the limiting properties of the weak convergence 
imply the inequality \eqref{EQ:Fatou}. 
Otherwise, we need the pointwise convergence 
of $\phi_j (\sqrt{\mathH})f_N$, 
which is obtained directly with a property of the kernel 
$\va(\mathcal{H})(x,y)$ of the operator 
$\va(\mathcal{H})$. 
Let us investigate the property of the kernel. 

\begin{lem}\label{lem:kernel}
Let $\Omega$ be an open set of $\R^n$. 
Then for any $\phi\in\mathscr S(\mathbb R)$, 
we have 
\begin{equation}\label{EQ:kernel}
\phi(\mathH)(x,\cdot) \in \mathcal X(\Omega)
\quad \text{for each $x \in\Omega$}. 
\end{equation}
\end{lem}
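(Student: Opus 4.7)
The plan is to interpret $\phi(\mathcal{H})(x,\cdot)$ as the integral kernel of $\phi(\mathcal{H})$ with its first argument fixed, and then to verify each defining condition of $\mathcal{X}(\Omega)$ by transferring operator-norm bounds on Schwartz functions of $\mathcal{H}$ into pointwise-in-$x$ estimates on the corresponding kernels. The central input is Proposition~\ref{prop:Lp}: for any $\phi\in\mathscr{S}(\mathbb{R})$, applying it with $(p,q)=(1,\infty)$ gives $\|\phi(\mathcal{H})(x,\cdot)\|_{L^\infty(\Omega)}\le C$ uniformly in $x$, while applying it with $(p,q)=(\infty,\infty)$ gives, by duality, $\|\phi(\mathcal{H})(x,\cdot)\|_{L^1(\Omega)}\le C$. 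I would first invoke the regularity of a canonical kernel (coming from the continuity of the Dirichlet heat kernel, since by spectral calculus $\phi(\mathcal{H})$ is expressible as a superposition of $e^{-t\mathcal{H}}$) so that these $L^1$- and $L^\infty$-in-$y$ bounds are valid for every $x\in\Omega$, not merely almost every.

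With these two kernel bounds in hand, the remaining clauses in the definition of $\mathcal{X}(\Omega)$ follow by algebra. Since $\lambda^M\phi(\lambda)\in\mathscr{S}(\mathbb{R})$ whenever $\phi\in\mathscr{S}(\mathbb{R})$ and $M\in\mathbb{N}$, the identity $\mathcal{H}^M\phi(\mathcal{H})=(\lambda^M\phi)(\mathcal{H})$ yields $\mathcal{H}^M\phi(\mathcal{H})(x,\cdot)\in L^1(\Omega)\cap L^\infty(\Omega)\subset L^2(\Omega)$; applying this identity once more with $M+1$ in place of $M$ puts $\mathcal{H}^M\phi(\mathcal{H})(x,\cdot)$ into $\mathcal{D}(\mathcal{H})$, which is the second clause.

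The main work is the semi-norm bound $\sup_{j\in\mathbb{N}}2^{Mj}\|\phi_j(\sqrt{\mathcal{H}})\phi(\mathcal{H})(x,\cdot)\|_{L^1(\Omega)}<\infty$ for each $M$. I would write $\phi_j(\sqrt{\mathcal{H}})\phi(\mathcal{H})=\Psi_j(\mathcal{H})$ with $\Psi_j(\lambda):=\phi_0(2^{-j}\sqrt{\lambda})\phi(\lambda)$, and rescale via $\widetilde{\Psi}_j(\mu):=\Psi_j(2^{2j}\mu)=\phi_0(\sqrt{\mu})\phi(2^{2j}\mu)$. Since $\widetilde{\Psi}_j$ is supported in $\mu\in[1/4,4]$ and $\phi$ is Schwartz, every Schwartz semi-norm of $\widetilde{\Psi}_j$ will be bounded by $C_N 2^{-2jN}$ for arbitrary $N$. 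Proposition~\ref{prop:Lp} applied with $\theta=2^{-2j}$ and $(p,q)=(\infty,\infty)$ to $\widetilde{\Psi}_j$ then gives $\|\Psi_j(\mathcal{H})\|_{L^\infty\to L^\infty}\le C_N 2^{-2jN}$, which, via the kernel interpretation, translates into $\|\phi_j(\sqrt{\mathcal{H}})\phi(\mathcal{H})(x,\cdot)\|_{L^1(\Omega)}\le C_N 2^{-2jN}$ uniformly in $x$; taking $N>M/2$ absorbs any polynomial weight $2^{Mj}$.

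The subtlest point will be the passage from \emph{almost every} $x$ to \emph{every} $x\in\Omega$, which requires selecting a canonical (continuous) representative of the kernel—a mild technicality inherited from the continuity of the Dirichlet heat kernel on open sets. Once this is in place, the three required properties of $\phi(\mathcal{H})(x,\cdot)$ follow directly from the uniform-in-$x$ operator estimates described above.
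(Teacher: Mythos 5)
Your overall strategy matches the paper's: apply Proposition~\ref{prop:Lp} to transfer operator bounds for $\phi(\mathcal H)$ and $(\lambda^M\phi)(\mathcal H)$ into the kernel bounds $\phi(\mathcal H)(x,\cdot)\in L^1\cap L^\infty$ and $K_{\mathcal H^M\phi(\mathcal H)}(x,\cdot)\in L^1\cap L^\infty$, using Lemma~\ref{lem:norm} (stated as Appendix~B in the paper) to pass from operator norms to $\sup_x\|\,\cdot\,(x,\cdot)\|_{L^{p'}}$. That is exactly the core of the paper's argument, and it works.

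Two caveats, however. First, the entire paragraph devoted to the Littlewood--Paley semi-norm $\sup_{j}2^{Mj}\|\phi_j(\sqrt{\mathcal H})\phi(\mathcal H)(x,\cdot)\|_{L^1}$ is not needed. The membership $\phi(\mathcal H)(x,\cdot)\in\mathcal X(\Omega)$ is decided by the set-theoretic definition
\[
\mathcal X(\Omega)=\bigl\{f\in L^1(\Omega)\cap\mathcal D(\mathcal H)\;\big|\;\mathcal H^{M}f\in L^1(\Omega)\cap\mathcal D(\mathcal H)\text{ for all }M\in\mathbb N\bigr\},
\]
and the $p_M$ semi-norms only describe the topology; they are automatically finite once $\mathcal H^M f\in L^1(\Omega)$ for every $M$, by the estimate \eqref{EQ:Lp2} from Lemma~\ref{lem:Lp} (apply $\|\phi_j(\sqrt{\mathcal H})\mathcal H^{-M}\|_{L^1\to L^1}\lesssim 2^{-2Mj}$ to $\mathcal H^M f$). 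So that rescaling computation, while correct, expends effort on something that comes for free.

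Second, and more substantively, you pass silently from ``$\mathcal H^M$ applied to the function $y\mapsto\phi(\mathcal H)(x,y)$'' to ``the kernel of $\mathcal H^M\phi(\mathcal H)=(\lambda^M\phi)(\mathcal H)$, evaluated at $x$.'' This identification is not automatic: the naive integration-by-parts argument that $T\mathcal H$ has kernel $\mathcal H_y K_T(x,y)$ already presumes $K_T(x,\cdot)\in\mathcal D(\mathcal H)$, which is precisely what you are trying to prove. The paper resolves this without circularity by defining $\mathcal H^M(\phi(\mathcal H)(x,\cdot))$ as the dual operator acting in $\mathcal X'(\Omega)$ and then checking, via the pairing
\[
{}_{\mathcal X'}\langle \mathcal H^M(\phi(\mathcal H)(x,\cdot)),f\rangle_{\mathcal X}
={}_{\mathcal X'}\langle \phi(\mathcal H)(x,\cdot),\mathcal H^M f\rangle_{\mathcal X}
=\mathcal H^M\phi(\mathcal H)\overline f(x)
={}_{\mathcal X'}\langle K_{\mathcal H^M\phi(\mathcal H)}(x,\cdot),f\rangle_{\mathcal X},
\]
that this $\mathcal X'$-element coincides with the honest kernel $K_{\mathcal H^M\phi(\mathcal H)}(x,\cdot)\in L^1\cap L^2$. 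Your proposal should include that duality step (rather than postponing difficulties to a continuity argument for the heat kernel, which the paper never needs). With that inserted, and with the semi-norm paragraph deleted, your proof aligns with the paper's.
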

\begin{proof}
Since 
\[
\|\phi(\mathH)\|_{L^p(\Omega)\to L^\infty(\Omega)} < \infty
\]
for any $1 \le p \le \infty$ by  
Proposition \ref{prop:Lp},
it follows from Lemma \ref{lem:norm} 
in appendix \ref{App:AppendixB} that 
\[
\sup_{x\in\Omega}
\|\phi(\mathH)(x,\cdot)\|_{L^{p'}(\Omega)}
=
\|\phi(\mathH)\|_{L^p(\Omega)\to L^\infty(\Omega)} 
\]
for any $1 \le p \le \infty$, where $p'$ is 
the conjugate exponent of $p$.
Hence we have 
\begin{equation}\label{EQ:kernel2}
\phi(\mathH)(x,\cdot) \in L^{p^\prime}(\Omega)
\quad \text{for each $x \in\Omega$}.
\end{equation}
In particular, we have 
$$\mathH^M(\phi(\mathH)(x,\cdot))
\in \mathcal X'(\Omega)
$$
for any $M \in \mathbb N$, since 
$L^{p'}(\Omega)\hookrightarrow \mathcal X^\prime(\Omega)$, and since $\mathH^M$ maps 
$\mathcal X^\prime(\Omega)$ to itself.
We denote by $K_{\mathH^M\phi(\mathH)}(x,y)$ the 
kernel of $\mathH^M\phi(\mathH)$. 
Then, for any $f \in \mathcal X(\Omega)$, we have
\begin{equation*}
\begin{split}
{}_{\mathcal X'(\Omega)}\langle \mathH^M(\phi(\mathH)(x,\cdot)), f \rangle_{\mathcal X(\Omega)}
& = {}_{\mathcal X'(\Omega)}\langle \phi(\mathH)(x,\cdot),\mathH^M f \rangle_{\mathcal X(\Omega)}\\
& =\phi(\mathH)\mathH^M \overline{f}(x)\\
& = \mathH^M \phi(\mathH)\overline{f}(x)\\
& = 
{}_{\mathcal X'(\Omega)}\langle K_{\mathH^M \phi(\mathH)}(x,\cdot), f \rangle_{\mathcal X(\Omega)}
\end{split}
\end{equation*}
for any $x\in\Omega$, 
which implies that
\[
\mathH^M(\phi(\mathH)(x,\cdot))(y)
=
K_{\mathH^M\phi(\mathH)}(x,y)
\quad \text{a.e.}\,y\in\Omega
\]
for any $x\in\Omega$.
Since 
\[
\lambda^M\phi(\lambda) \in \mathscr S(\mathbb R)
\]
for any $M \in \mathbb N$, it follows from \eqref{EQ:kernel2} 
for $p'=1$ and $p'=2$
that 
\[
K_{\mathH^M\phi(\mathH)}(x,\cdot) \in L^1(\Omega)\cap L^2(\Omega)
\]
for any $M\in\mathbb N$ and $x\in\Omega$. 
Hence we obtain
\[
\mathH^M(\phi(\mathH)(x,\cdot)) \in L^1(\Omega)\cap L^2(\Omega)
\]
for any $M\in\mathbb N$ and $x\in\Omega$. Thus we conclude \eqref{EQ:kernel}. The proof of Lemma \ref{lem:kernel} is finished. 
\end{proof}

We are in a position to prove Lemma \ref{lem:Fatou}.
\begin{proof}[Proof of Lemma \ref{lem:Fatou}]
First, we show that 
\begin{equation}\label{EQ:p-con}
\phi_j(\sqrt{\mathH})f_N (x)\to \phi_j(\sqrt{\mathH})f (x)
\quad \text{a.e.}\,x\in\Omega\text{ as $N\to\infty$}
\end{equation}
for each $j\in\mathbb Z$.
Put $$\Phi_j = \phi_{j-1} +\phi_j +\phi_{j+1}$$ 
for $j \in \mathbb Z$. 
Then, noting from the assertion (i) in Lemma \ref{lem:decomposition1} that
\[
\Phi_j (\sqrt{\mathH}) f_N
\in L^\infty(\Omega),
\]
and from Lemma \ref{lem:kernel} that
\[
\phi_j(\sqrt{\mathH})(x,\cdot) \in \mathcal X(\Omega)
\quad \text{for each $x \in\Omega$}, 
\]
we write
\begin{equation}\label{EQ:3.5-1}
\begin{split}
\phi_j(\sqrt{\mathH})f_N (x) & =\phi_j(\sqrt{\mathH})\Phi_j 
(\sqrt{\mathH})f_N (x)\\
& =
{}_{\mathcal X'(\Omega)}\langle \Phi_j (\sqrt{\mathH}) f_N, \phi_j(\sqrt{\mathH})(x,\cdot)\rangle_{\mathcal X(\Omega)}
\end{split}
\end{equation}
for each $j\in\mathbb Z$ and $x \in \Omega$.
In a similar way, we have 
\begin{equation}\label{EQ:3.5-2}
\phi_j(\sqrt{\mathH})f (x) = 
{}_{\mathcal X'(\Omega)}\langle \Phi_j (\sqrt{\mathH}) f, \phi_j(\sqrt{\mathH})(x,\cdot)\rangle_{\mathcal X(\Omega)}
\end{equation}
for each $j\in\mathbb Z$ and $x \in \Omega$. 
Since 
\[
\Phi_j (\sqrt{\mathH})f_N \to \Phi_j (\sqrt{\mathH})f \quad \text{in $\mathcal X'(\Omega)$ as $N\to\infty$}
\]
for each $j\in\mathbb Z$ by assumption  \eqref{EQ:convergence} and the continuity of 
$\Phi_j (\sqrt{\mathH})$ from $\mathcal{X}'(\Omega)$ into 
itself, 
we deduce that 
\begin{equation}\label{EQ:3.5-3}
{}_{\mathcal X'(\Omega)}\langle \Phi_j (\sqrt{\mathH}) f_N, \phi_j(\sqrt{\mathH})(x,\cdot)\rangle_{\mathcal X(\Omega)}
\to 
{}_{\mathcal X'(\Omega)}\langle \Phi_j (\sqrt{\mathH}) f, \phi_j(\sqrt{\mathH})(x,\cdot)\rangle_{\mathcal X(\Omega)}
\end{equation}
for each $j\in\mathbb Z$ and $x \in \Omega$ as $N \to \infty$. 
Hence, combining \eqref{EQ:3.5-1}, \eqref{EQ:3.5-2} and \eqref{EQ:3.5-3}, we get 
the pointwise convergence \eqref{EQ:p-con}.

Let us turn to the proof of the inequality \eqref{EQ:Fatou}. To begin with, given 
$1\le p\le \infty$, we claim that 
\begin{equation}\label{EQ:Fatou-Lp}
\|\phi_j(\sqrt{\mathH})f\|_{L^p(\Omega)}
\le
\liminf_{N\to \infty}\|\phi_j(\sqrt{\mathH})f_N\|_{L^p(\Omega)}
\end{equation}
for each $j\in \mathbb{Z}$.
When $1\le p<\infty$, the inequality 
\eqref{EQ:Fatou-Lp} is a consequence of 
\eqref{EQ:p-con} and Fatou's lemma.
We have to prove the case when $p=\infty$. 
In this case, thanks to \eqref{EQ:p-con}, 
the inequality \eqref{EQ:Fatou-Lp} is true for $p=\infty$, 
since $\{\phi_j(\sqrt{\mathH})f_N\}_{N\in\mathbb{N}}$
is a bounded sequence in $L^\infty(\Omega)$. 
Finally, multiplying by $2^{sj}$ to the both sides 
of \eqref{EQ:Fatou-Lp}, we conclude the 
required inequality \eqref{EQ:Fatou}. 
The proof of Lemma \ref{lem:Fatou} is finished.
\end{proof}

\section{Proof of Theorem \ref{thm:bilinear}} \label{sec:sec4}
In this section we prove Theorem \ref{thm:bilinear}. In the inhomogeneous case
the approximation of the identity is written as  
\[
f=
\psi(\mathH)f +
\sum_{k=1}^\infty \phi_k(\sqrt{\mathH})f
\quad \text{in $\mathcal{X}^\prime(\Omega)$,}
\]
and in the homogeneous
case we write 
\[
f=\sum_{k=-\infty}^\infty \phi_k(\sqrt{\mathH})f
\quad \text{in $\mathcal{Z}^\prime(\Omega)$.}
\]
Hence it is sufficient to 
prove the homogeneous case (ii), since one can reduce 
the argument of the proof of (i) to that of (ii). Therefore, we shall concentrate on proving 
the case (ii). \\

Hereafter, for the sake of simplicity, 
we use the following notations:  
\[
f_j:=\phi_j(\sqrt{\mathH})f,\quad
S_j(f)=S_j(\sqrt{\mathH})(f)
:=\sum_{k=-\infty}^j \phi_k(\sqrt{\mathH})f, 
\quad j\in \mathbb{Z}.
\]
We have to divide the proof 
into two cases: 
\[
\text{``$1\le p_2,p_3<\infty$" and ``$p_2=\infty$ or $p_3=\infty$",}
\] 
since the approximation by the Littlewood-Paley partition of 
unity is available only for $p_2, p_3 < \infty$ 
(see \eqref{907-2}) and a constant function in $\mathcal P (\Omega)$ 
defined by \eqref{EQ:P} appears in the case when 
$p_2 = \infty$ or $p_3 = \infty$. 

\vspace{5mm}

{\bf The case: $1\le p_2,p_3<\infty$.}
Let $f\in \dot{B}^s_{p_1, q}(\mathH)\cap 
L^{p_3}(\Omega)$ and $g\in \dot{B}^s_{p_4, q}(\mathH)
\cap L^{p_2}(\Omega)$. 
Referring to the Bony paraproduct formula (see \cite{Bo-1981}), 
we write 
\begin{equation*}
fg = \sum_{k \in \mathbb{Z}} f_kS_{k-3}(g) + \sum_{k \in \mathbb{Z}} S_{k-3}(f)g_k + \sum_{k\in\mathbb Z}\sum_{|k - l| \le 2} f_kg_l
\quad \text{in }\mathcal X'(\Omega),
\end{equation*}
which is assured by the assertion (ii) in Lemma \ref{lem:lem-L2},  since $p_2,p_3<\infty$.  
By using Minkowski's inequality, we write
\begin{equation*}
\label{EQ:2}
\|fg\|_{\dot{B}^s_{p,q}(\mathH)}
\le
I+II+III+IV+V+VI,
\end{equation*}
where we put
\[
I:= 
\Biggl\{
\sum_{j \in \mathbb{Z}}
\bigg(2^{sj}\sum_{|k-j| \le 2}
\Big\| \phi_{j}(\sqrt{\mathH})\Big( f_kS_{k-3}(g)\Big) 
\Big\|_{L^p(\Omega)}\bigg)^q 
\Biggr\}^{\frac{1}{q}},
\]
\[
II:= 
\Biggl\{
\sum_{j \in \mathbb{Z}}
\bigg(2^{sj}\sum_{|k-j| > 2}
\Big\| \phi_{j}(\sqrt{\mathH})\Big( f_kS_{k-3}(g)\Big) 
\Big\|_{L^p(\Omega)}\bigg)^q 
\Biggr\}^{\frac{1}{q}},
\]
\[
III:= 
\Biggl\{
\sum_{j \in \mathbb{Z}}
\bigg(2^{sj}\sum_{|k-j| \le 2}
\Big\| \phi_{j}(\sqrt{\mathH})\Big( S_{k-3}(f)g_k\Big) 
\Big\|_{L^p(\Omega)}\bigg)^q 
\Biggr\}^{\frac{1}{q}},
\]
\[
IV:= 
\Biggl\{
\sum_{j \in \mathbb{Z}}
\bigg(2^{sj}\sum_{|k-j| > 2}
\Big\| \phi_{j}(\sqrt{\mathH})\Big( S_{k-3}(f)g_k\Big) 
\Big\|_{L^p(\Omega)}\bigg)^q 
\Biggr\}^{\frac{1}{q}},
\]
\[
V:= 
\Biggl\{
\sum_{j \in \mathbb{Z}}
\bigg(
2^{sj}
\sum_{ \substack{ j-2 \le k \\ {\rm or} \\ j-2 \le l } }
\Big\| 
\phi_{j}(\sqrt{\mathH})
\Big(
\sum_{|k-l| \le 2} f_k g_l
\Big) 
\Big\|_{L^p(\Omega)} 
\bigg)^q
\Biggr\}^{\frac{1}{q}},
\]
\[
VI:= 
\Biggl\{
\sum_{j \in \mathbb{Z}}
\bigg(
2^{sj}
\sum_{ \substack{ j-2 > k \\ {\rm and} \\ j-2 > l } }
\Big\| 
\phi_{j}(\sqrt{\mathH})
\Big(
\sum_{|k-l| \le 2} f_k g_l
\Big) 
\Big\|_{L^p(\Omega)} 
\bigg)^q
\Biggr\}^{\frac{1}{q}}.
\]
We note that when $\Omega=\R^n$, 
the terms $II$, $IV$ and $VI$ vanish. 
Indeed, the integrand of 
term $II$ is written as 
the inverse Fourier transform of 
the product of $\phi_j(|\xi|)$ with 
the 
convolution product 
of $\phi_k(|\xi|)\mathscr F f$ and 
$S_{k-3}(|\xi|)\mathscr F g$, where 
$\mathscr{F}f$ denotes the Fourier transform of $f$. Since the supports of 
$\phi_j(|\xi|)$ do not intersect with those 
of the previous convolution product 
for $|j-k|>2$, we deduce that $II$ vanishes. 
In a similar way we find that 
$IV$ and $VI$ also vanish. 
However, when $\Omega\ne\R^n$, 
the integrands do not vanish in general. 
In fact, if $II$, $IV$ and $VI$ vanish, 
the bilinear estimates hold for all positive regularity 
$s$ by the argument of Case A below. 
However it contradicts the 
counter-example constructed in appendix~A. 
It should be noted that the assumption 
\eqref{EQ:grad-infty} on the gradient estimate
plays an 
essential role in the estimation of these terms 
$II$, $IV$ and $VI$.\\

Thus we estimate separately as follows: 
\[
\text{Case A: Estimates for $I, III$ and $V$ \quad \text{and} \quad 
Case B: Estimates for 
$II, IV$ and $VI$. }
\]

\vspace{5mm}

{\bf Case A: Estimates for 
$I, III$ and $V$.}
These terms can be estimated in the same way as in the case when 
$\Omega = \mathbb R^n$. Since similar 
arguments also appear for $II, IV$ 
and $VI$, we give the proof 
in a self-contained way.
First we estimate the term $I$. 
Noting from the assertion (ii) in Lemma \ref{lem:Lp} that
$f_k\in L^{p_1}(\Omega)$ and 
$S_{k-3}(g)\in L^{p_2}(\Omega)$
for each $k\in \mathbb{Z}$, 
we deduce from the estimate \eqref{EQ:Lp2} in Lemma \ref{lem:Lp}, 
H\"older's inequality and the estimate \eqref{EQ:Lp3} for $\alpha=0$ in Lemma \ref{lem:Lp} that
\begin{equation*}
\begin{split}
\Big\| \phi_{j}(\sqrt{\mathH})\Big( f_kS_{k-3}(g)\Big) 
\Big\|_{L^p(\Omega)}
&\le
C
\big\| f_kS_{k-3}(g) \big\|_{L^p(\Omega)}\\
&\le
C
\|f_k\|_{L^{p_1}(\Omega)}
\|S_{k-3}(g)\|_{L^{p_2}(\Omega)}\\
&\le
C
\|f_k\|_{L^{p_1}(\Omega)}
\|g\|_{L^{p_2}(\Omega)},
\end{split}
\end{equation*}
since $1/p=1/p_1+1/p_2$. 
Thus we conclude from the above estimate and Minkowski's inequality that
\begin{equation*}
\begin{split}
I
&\le
C\Biggl\{\sum_{j \in \mathbb{Z}}\bigg(2^{sj}\sum_{|k-j| \le 2}\| f_k\|_{L^{p_1}(\Omega)}\bigg)^q  \Biggr\}^{\frac{1}{q}} \| g \|_{L^{p_2}(\Omega)}\\
&=
C\Biggl\{\sum_{j \in \mathbb{Z}}\bigg(\sum_{|k'| \le 2}2^{-sk'} \cdot 2^{s(j+k')}\|f_{j+k'}\|_{L^{p_1}(\Omega)}\bigg)^q  \Biggr\}^{\frac{1}{q}}
\| g \|_{L^{p_2}(\Omega)}\\
&\le C\sum_{|k'| \le 2}
2^{-sk'}
\bigg\{
\sum_{j \in \mathbb{Z}}
\Big(
2^{s(j+k')}\| f_{j+k'} \|_{L^{p_1}(\Omega)}\Big)^q  \bigg\}^{\frac{1}{q}}
\| g \|_{L^{p_2}(\Omega)}\\
&\le C\|f\|_{\dot{B}^s_{p_1,q}(\mathH)}
\| g \|_{L^{p_2}(\Omega)}.
\end{split}
\end{equation*}

As to the term $III$, 
interchanging the role of $f$ and $g$ in the 
above argument, we get
\[
III 
\le C \| f \|_{L^{p_3}(\Omega)} \|g\|_{\dot{B}^s_{p_4,q}(\mathH)},
\]
where $1/p=1/p_3+1/p_4$. \\

As to the term $V$ for $j - 2 \le k$, 
applying the estimate \eqref{EQ:Lp2}, and using H\"older's inequality, 
we estimate 
\begin{equation*}
\begin{split}
&
\Biggl\{
\sum_{j \in \mathbb{Z}}
\bigg(
2^{sj}
\sum_{j-2 \le k}
\Big\| 
\phi_{j}(\sqrt{\mathH})
\Big(
\sum^{k+2}_{l=k-2} f_k g_l
\Big) 
\Big\|_{L^p(\Omega)} 
\bigg)^q
\Biggr\}^{\frac{1}{q}}\\
\le & \, 
C
\Biggl\{
\sum_{j \in \mathbb{Z}}
\bigg(
2^{sj}
\sum_{j-2 \le k}
\|f_k\|_{L^{p_1}(\Omega)}
\Big(
\sum^{k+2}_{l=k-2} 
\|g_l\|_{L^{p_2}(\Omega)} 
\Big)
\bigg)^q
\Biggr\}^{\frac{1}{q}}\\
\le &\, 
C 
\Biggl\{\sum_{j \in \mathbb{Z}}\bigg(2^{sj}\sum_{j-2 \le k} \| f_k \|_{L^{p_1}(\Omega)} 
\bigg)^q \Biggr\}^{\frac{1}{q}}\|g \|_{L^{p_2}(\Omega)}.
\end{split}
\end{equation*} 
Here, by applying 
Minkowski's inequality to the right member in the above inequality, we find that
\begin{equation*}
\begin{split}
\Biggl\{\sum_{j \in \mathbb{Z}}\bigg(2^{sj}\sum_{j-2 \le k} \| f_k \|_{L^{p_1}(\Omega)} 
\bigg)^q \Biggr\}^{\frac{1}{q}}
&=
\Biggl\{\sum_{j \in \mathbb{Z}}\bigg( \sum_{k' \ge -2} 2^{-sk'} \cdot 2^{s(j+k')}\| f_{j + k'} \|_{L^{p_1}(\Omega)}\bigg)^q \Biggr\}^{\frac{1}{q}}\\
&\le C\sum^\infty_{k'=-2}
2^{-sk'}
\bigg\{
\sum_{j \in \mathbb{Z}}
\Big(
2^{s(j+k')}\| f_{j+k'} \|_{L^{p_1}(\Omega)}\Big)^q  \bigg\}^{\frac{1}{q}}\\
&\le C\|f\|_{\dot{B}^s_{p_1,q}(\mathH)},
\end{split}
\end{equation*} 
since $s>0$. 
Hence, combining the above two estimates, we deduce that 
\[
\Biggl\{
\sum_{j \in \mathbb{Z}}
\bigg(
2^{sj}
\sum_{j-2 \le k}
\Big\| 
\phi_{j}(\sqrt{\mathH})
\Big(
\sum_{|k-l| \le 2} f_k g_l
\Big) 
\Big\|_{L^p(\Omega)} 
\bigg)^q
\Biggr\}^{\frac{1}{q}}
\le 
C\|f\|_{\dot{B}^s_{p_1,q}(\mathH)} \| g \|_{L^{p_2}(\Omega)}.
\]
In a similar way, we can proceed the above argument in the case when $j-2\le l$; thus we
conclude that
\begin{align*}
	V \le C \big( \|f\|_{\dot{B}^s_{p_1,q}(\mathH)} \| g \|_{L^{p_2}(\Omega)} + \| f \|_{L^{p_3}(\Omega)} \|g\|_{\dot{B}^s_{p_4,q}(\mathH)} \big).
\end{align*}

\vspace{0.5cm}

{\bf Case B: Estimates for $II, IV$ and $VI$.}  
First let us estimate the term $II$. 
When $k-j>2$, we deduce from the same argument 
as in $I$ that 
\[
\Biggl\{
\sum_{j \in \mathbb{Z}}
\bigg(2^{sj}\sum_{k-j>2}
\Big\| \phi_{j}(\sqrt{\mathH})\Big( f_kS_{k-3}(g)\Big) 
\Big\|_{L^p(\Omega)}\bigg)^q 
\Biggr\}^{\frac{1}{q}}
\le 
C\|f\|_{\dot{B}^s_{p_1,q}(\mathH)} \| g \|_{L^{p_2}(\Omega)}.
\]
Hence all we have to do is to 
prove the case when $k-j<-2$, i.e., 
\begin{equation}\label{EQ:II}
\Biggl\{
\sum_{j \in \mathbb{Z}}
\bigg(2^{sj}\sum_{k-j<-2}
\Big\| \phi_{j}(\sqrt{\mathH})\Big( f_kS_{k-3}(g)\Big) 
\Big\|_{L^p(\Omega)}\bigg)^q 
\Biggr\}^{\frac{1}{q}}
\le 
C\|f\|_{\dot{B}^s_{p_1,q}(\mathH)} \| g \|_{L^{p_2}(\Omega)}.
\end{equation}
In fact, 
noting from Lemma \ref{lem:decomposition1} that 
\[
f_k, S_{k-3}(g)\in L^\infty(\Omega),
\]
and from \eqref{EQ:inc2} that 
\[
L^\infty(\Omega)\hookrightarrow \mathcal X'(\Omega),
\]
we have 
$$f_k S_{k-3}(g)\in \mathcal X'(\Omega).$$ 
Then we write 
\begin{equation}
\label{EQ:key}
\phi_j(\sqrt{\mathH})\big(f_k S_{k-3}(g)\big) 
= \mathH^{-1}\phi_j(\sqrt{\mathH})\mathH\big(f_k S_{k-3}(g)\big)
\quad \text{in }\mathcal X'(\Omega).
\end{equation}
Here it should be noted that the operator $\mathH^{-1}$ in \eqref{EQ:key} 
is well-defined, since
\[
\mathH^{-1}\phi_j(\sqrt{\mathH}) h \in \mathcal X^\prime(\Omega)
\]
for any $h\in \mathcal X^\prime(\Omega)$.
Hence, 
applying the Leibniz rule in 
Lemma \ref{lem:Leib} to the identities \eqref{EQ:key}, we have:
\begin{equation}\label{EQ:key2}
\begin{split}
&\phi_j(\sqrt{\mathH})\big(f_k S_{k-3}(g)\big)\\
=&\,
\mathH^{-1} \phi_j(\sqrt{\mathH})
\Big\{
(\mathH f_k)S_{k-3}(g)
-
2\nabla f_k \cdot \nabla S_{k-3}(g)
+
f_k \big(\mathH S_{k-3}(g)\big)
\Big\}
\end{split} 
\end{equation} 
in $\mathcal X'(\Omega)$.
Thanks to estimates \eqref{EQ:Lp2} and  \eqref{EQ:Lp3} from Lemma \ref{lem:Lp}, 
the first term in the right member in \eqref{EQ:key2} is estimated as 
\begin{equation*}
\begin{split}
\Big\|\mathH^{-1} \phi_j(\sqrt{\mathH})
\Big\{
(\mathH f_k)S_{k-3}(g)
\Big\}
\Big\|_{L^p(\Omega)}
&\le C 2^{-2j}
\big\|(\mathH f_k)S_{k-3}(g)
\big\|_{L^p(\Omega)}\\
&\le
C 2^{-2j} \|
\mathH f_k\|_{L^{p_1}(\Omega)}
\|S_{k-3}(g)\|_{L^{p_2}(\Omega)}\\
&\le C 
2^{-2(j-k)}
\|f_k\|_{L^{p_1}(\Omega)} 
\|g\|_{L^{p_2}(\Omega)}.
\end{split}
\end{equation*}
In a similar way, we estimate the third term as  
\[
\Big\|
\mathH^{-1} \phi_j(\sqrt{\mathH})
\Big\{
f_k \mathH S_{k-3}(g)
\Big\}
\Big\|_{L^p(\Omega)}
\le C 
2^{-2(j-k)}
\|f_k\|_{L^{p_1}(\Omega)} 
\|g\|_{L^{p_2}(\Omega)}.
\]
As to the second, thanks to \eqref{EQ:nabla2} and \eqref{EQ:nabla3} 
from Lemma \ref{lem:nabla}, we estimate 
\begin{equation*}
\begin{split}
\Big\|
\mathH^{-1} \phi_j(\sqrt{\mathH})
\Big\{
\nabla f_k\cdot \nabla 
S_{k-3}(g)
\Big\}
\Big\|_{L^p(\Omega)}
&\le
C 2^{-2j}
\big\|\nabla f_k\cdot \nabla S_{k-3}(g)
\big\|_{L^p(\Omega)}\\
&\le
C 2^{-2j}
\|\nabla f_k\|_{L^{p_1}(\Omega)}
\|
\nabla S_{k-3}(g)
\|_{L^{p_2}(\Omega)}\\
&\le
C 
2^{-2(j-k)}
\|f_k\|_{L^{p_1}(\Omega)} 
\|g\|_{L^{p_2}(\Omega)}.
\end{split}
\end{equation*}
Hence, combining the identity \eqref{EQ:key2} with the above three estimates, 
we get
\begin{equation*}
\begin{split}
\Big\| 
\phi_{j}(\sqrt{\mathH})\Big( 
f_kS_{k-3}(g)
\Big) \Big\|_{L^p(\Omega)}
\le 
C 
2^{-2(j-k)}
\|f_k\|_{L^{p_1}(\Omega)} 
\|g\|_{L^{p_2}(\Omega)}
\end{split}
\end{equation*}
for any $j, k\in \mathbb Z$. 
Therefore, we conclude from this estimate that
\begin{equation*}
\begin{split}
&\Biggl\{
\sum_{j \in \mathbb{Z}}
\bigg(2^{sj}\sum_{k-j < -2}
\Big\| \phi_{j}(\sqrt{\mathH})\Big( f_kS_{k-3}(g)\Big) 
\Big\|_{L^p(\Omega)}\bigg)^q 
\Biggr\}^{\frac{1}{q}}\\
\le 
&\, 
C \Biggl\{
\sum_{j \in \mathbb{Z}}\bigg( 2^{sj}\sum_{k-j < -2} 2^{-2(j-k)} \|f_k\|_{L^{p_1}(\Omega)} \bigg)^q
\Biggr\}^{\frac{1}{q}} \|g\|_{L^{p_2}(\Omega)}\\
=
&\, 
C \Biggl\{
\sum_{j \in \mathbb{Z}}\bigg( \sum_{k' < -2} 2^{(2-s)k'} \cdot 2^{s(j+k')} \|f_{j+k'}\|_{L^{p_1}(\Omega)}  \bigg)^q
\Biggr\}^{\frac{1}{q}} \|g\|_{L^{p_2}(\Omega)}\\
\le
&\, 
C\|f\|_{\dot{B}^s_{p_1,q}(\mathH)} \| g \|_{L^{p_2}(\Omega)},
\end{split}
\end{equation*}
since $s<2$, 
which proves \eqref{EQ:II}. 
Thus we conclude that
\begin{equation*}
II 
\le C \|f\|_{\dot{B}^s_{p_1,q}(\mathH)}  \| g \|_{L^{p_2}(\Omega)}.
\end{equation*}

As to the term $IV$, interchanging the role of $f$ and $g$ in the above argument, we get
\[
IV \le C \| f \|_{L^{p_3}(\Omega)} \|g\|_{\dot{B}^s_{p_4,q}(\mathH)} .
\]
As to the term $VI$, 
we estimate in a similar way to $II$;
\begin{equation*}
\begin{split}
VI
& 
\le \Biggl\{
\sum_{j \in \mathbb{Z}}
\bigg(
2^{sj}
\sum_{j-2 > k}
\Big\| 
\phi_{j}(\sqrt{\mathH})
\Big(
\sum_{|k-l| \le 2} f_k g_l
\Big) 
\Big\|_{L^p(\Omega)} 
\bigg)^q
\Biggr\}^{\frac{1}{q}}\\
&
\le
C \Biggl\{
\sum_{j \in \mathbb{Z}} \Big(2^{sj}\sum_{j-2 > k} 2^{-2(j-k)}\|f_k\|_{L^{p_1}(\Omega)} \Big)^q 
\Biggr\}^{\frac{1}{q}} \| g \|_{L^{p_2}(\Omega)}\\
&
=
C  \Biggl\{
\sum_{j \in \mathbb{Z}}
\Big(
\sum_{k' < -2} 2^{(2-s)k'} \cdot 2^{s(j+k')}\| f_{j+k'} \|_{L^{p_1}(\Omega)} \Big)^q 
\Biggr\}^{\frac{1}{q}} \| g \|_{L^{p_2}(\Omega)}\\
& 
\le 
C\|f\|_{\dot{B}^s_{p_1,q}(\mathH)} \| g \|_{L^{p_2}(\Omega)},
\end{split}
\end{equation*}
since $s<2$.\\

Summarizing cases A and B, we arrive at the required estimate \eqref{EQ:bilinear2}. 
The proof of the case when $1\le p_2,p_3<\infty$ is finished.\\

It remains to prove the case when $p_2=\infty$ or $p_3=\infty$.\\ 

{\bf The case: $p_2=\infty$ or $p_3=\infty$.} 
We may prove only 
the case when $p_2=p_3=\infty$, since the other cases are 
proved in a similar way. In this case, we note that
\[
p_1=p_4=p.
\]
Let $f,g\in \dot{B}^s_{p, q}(\mathH)\cap 
L^{\infty}(\Omega)$. Then it follows from Lemma \ref{lem:Lp} that
\begin{equation}\label{EQ:Prep}
\Big\|\sum_{j=k}^\infty
f_j
\Big\|_{L^\infty(\Omega)}
\le C\|f\|_{L^\infty(\Omega)}
\end{equation}
for any $k\in\mathbb Z$. Hence there exist 
a subsequence $$\Big\{\sum_{j=k_l}^\infty
f_j\Big\}_{l\in\mathbb N}$$
and a function
$F\in L^\infty(\Omega)$ such that 
\begin{equation}\label{EQ:11-convergence}
\sum_{j=k_l}^\infty
f_j \rightharpoonup 
F\quad \text{weakly* in }L^\infty(\Omega)
\end{equation}
as $l\to \infty$, 
which also yields the convergence in $\mathcal X'(\Omega)$ 
and $\mathcal Z' (\Omega)$ 
by the embedding 
$$L^\infty (\Omega) \hookrightarrow \mathcal X' (\Omega) 
\hookrightarrow \mathcal Z' (\Omega).
$$ 
On the other hand, it follows from Lemma \ref{lem:decomposition1} that
\[
\sum_{j=k_l}^\infty
f_j \to
f\quad \text{in }\mathcal Z'(\Omega)
\]
as $l\to \infty$. Hence we see that $F=f$ in $\mathcal Z'(\Omega)$, 
which implies that
\[
P_f:= f-F \in \mathcal P(\Omega).
\]
Therefore we conclude from 
\eqref{EQ:11-convergence} that 
\begin{equation}\label{EQ:1111}
\sum_{j=k_l}^\infty
f_j \rightharpoonup 
f-P_f\quad \text{weakly* in }L^\infty(\Omega)
\end{equation}
as $l\to \infty$. 
In a similar way, there exist 
a subsequence $$\Big\{\sum_{j=k_{l'}}^\infty
g_j\Big\}_{l'\in\mathbb N}$$
and $P_g\in \mathcal P(\Omega)$ such that 
\begin{equation}\label{EQ:2222}
\sum_{j=k_{l'}}^\infty
g_j \rightharpoonup 
g-P_g\quad \text{weakly* in }L^\infty(\Omega)
\end{equation}
as $l'\to \infty$. 
Hence, by \eqref{EQ:1111} and \eqref{EQ:2222}, 
there exists a subsequence $\{ l'(l) \}_{l = 1}^\infty$ 
of $\{ l' \}_{l'=1}^\infty$ such that
\[
\Big(\sum_{j=k_l}^\infty
f_j\Big)
\Big(\sum_{j=k_{l'(l)}}^\infty
g_j \Big)
\rightharpoonup
(f-P_f)(g-P_g)
\quad \text{weakly* in }L^\infty(\Omega)
\]
as $l\to \infty$. 
Hence we have
\begin{equation}\label{EQ:condi-1}
\Big(\sum_{j=k_l}^\infty
f_j\Big)
\Big(\sum_{j=k_{l'(l)}}^\infty
g_j \Big)
\to
(f-P_f)(g-P_g)
\quad \text{in }\mathcal X'(\Omega)
\end{equation}
as $l\to \infty$, since $L^\infty(\Omega)\hookrightarrow \mathcal X^\prime(\Omega)$. 
Now, 
 the estimate of 
$\dot{B}^s_{p,q}$-norm of the left member in \eqref{EQ:condi-1} is obtained by 
the argument as in the previous case 
$1 \le p_2,p_3<\infty$. Hence, there exists a 
constant $C>0$ such that
\begin{equation}\label{EQ:condi-2}
\bigg\|\Big(\sum_{j=k_l}^\infty
f_j\Big)
\Big(\sum_{j=k_{l'(l)}}^\infty
g_j \Big)\bigg\|_{\dot{B}^s_{p,q}(\mathH)}
\le 
C\left(
\|f\|_{\dot{B}^s_{p,q}(\mathH)}
\|g\|_{L^\infty(\Omega)}
+
\|f\|_{L^\infty(\Omega)}
\|g\|_{\dot{B}^s_{p,q}(\mathH)}
\right)
\end{equation}
for any $l\in\mathbb N$. 
Here, we note that $P_f$ and $P_g$ are 
constants by the assertion (iii) from  
Lemma \ref{lem:P}.
As a consequence of \eqref{EQ:condi-1} and 
\eqref{EQ:condi-2}, we conclude from Lemma \ref{lem:Fatou} that
\begin{equation*}
\begin{split}
\|fg\|_{\dot{B}^s_{p,q}(\mathH)}
\le & \, \liminf_{l\to\infty}
\bigg\|\Big(\sum_{j=k_l}^\infty
f_j\Big)
\Big(\sum_{j=k_{l'(l)}}^\infty
g_j \Big)\bigg\|_{\dot{B}^s_{p,q}(\mathH)}\\
&+\|fP_g\|_{\dot{B}^s_{p,q}(\mathH)}
+\|P_fg\|_{\dot{B}^s_{p,q}(\mathH)}
+\|P_fP_g\|_{\dot{B}^s_{p,q}(\mathH)}\\
\le & \, C\left(
\|f\|_{\dot{B}^s_{p,q}(\mathH)}
\|g\|_{L^\infty(\Omega)}
+
\|f\|_{L^\infty(\Omega)}
\|g\|_{\dot{B}^s_{p,q}(\mathH)}
\right)\\
&+\|f\|_{\dot{B}^s_{p,q}(\mathH)}|P_g|
+|P_f|\|g\|_{\dot{B}^s_{p,q}(\mathH)}
+\|P_fP_g\|_{\dot{B}^s_{p,q}(\mathH)}.
\end{split}
\end{equation*}
Here, combining 
part (c) in (i) and the assertion (iii) from  
Lemma \ref{lem:P}, we deduce that
\[
\|P_fP_g\|_{\dot{B}^s_{p,q}(\mathH)}=0. 
\]
Hence, all we have to do is to prove that
\begin{equation}\label{EQ:111}
|P_f| \le C
\|f\|_{L^\infty(\Omega)},
\end{equation}
\begin{equation}\label{EQ:112}
|P_g|\le C
\|g\|_{L^\infty(\Omega)}.
\end{equation}
Noting \eqref{EQ:1111}, we estimate, by using \eqref{EQ:Prep}, 
\[
|P_f|\le 
\|f\|_{L^\infty(\Omega)}
+ 
\liminf_{l\to\infty}
\Big\|
\sum_{j=k_l}^\infty
f_j
\Big
\|_{L^\infty(\Omega)}
\le C\|f\|_{L^\infty(\Omega)}.
\]
This proves \eqref{EQ:111}. In a 
similar way, we get \eqref{EQ:112}.
The proof of Theorem \ref{thm:bilinear} is finished.

\section{A final remark} \label{sec:sec5}
Once the bilinear estimates for the Dirichlet Laplacian are established, 
the same type estimates in Besov spaces generated by the Schr\"odinger operators 
are also obtained. In this section we discuss this topic. \\

To begin with, let us give definitions of function spaces generated by 
the Schr\"odinger operators along \cite{IMT-Besov}. 
Let $\Omega$ be an open set in $\mathbb R^n$ 
with $n \ge 1$. 
We denote by $\mathH_V$  
the self-adjoint realization of $-\Delta + V$ with the domain
\[
\mathcal D(\mathH_V)
=
\big\{ f \in H^1_0(\Omega)  \, \big| \, 
\mathH_V f \in L^2 (\Omega), \, \sqrt{V_+} f\in L^2(\Omega)\big\}
\]
such that 
\begin{equation*} 
\left(\mathcal{H}_Vf,g\right)_{L^2(\Omega)}
=\int_\Omega \nabla f(x)\cdot \overline{\nabla g(x)}\, dx
+\int_\Omega V(x)f(x)\overline{g(x)}\, dx
\end{equation*}
for any $f\in \mathcal D(\mathH_V)$ and 
$g\in H^1_0(\Omega)$ with $\sqrt{V_+}g\in L^2(\Omega)$, 
where $V=V(x)$ is a real-valued measurable function on $\Omega$ such that 
\begin{equation}\label{EQ:assV1}
V = V_{+} - V_-, \quad 
V_{\pm} \geq 0, \quad 
V_+ \in  L^1_{\rm loc} (\Omega) 
\text{ and } 
V_- \in K_n (\Omega). 
\end{equation}
Here, we say that $V_-\in K_{n}( \Omega)$ if 
\begin{align}\notag 
\left\{
\begin{aligned}
	&\lim_{r \rightarrow 0} \sup_{x \in \Omega} \int_{\Omega \cap \{|x-y|<r\}} 
	   \frac{V_-(y)}{|x-y|^{n-2}} \,dy = 0 &\text{for }n\ge 3, \\
	&\lim_{r \rightarrow 0} \sup_{x \in \Omega} \int_{\Omega \cap \{|x-y|<r\}} 
	   \log (|x-y|^{-1})V_-(y) \,dy = 0 &\text{for }n=2, \\
	&\sup_{x \in \Omega}\int_{\Omega \cap \{|x-y|<1\}} V_-(y) \,dy <\infty &\text{for }n=1. 
	\end{aligned}\right.
\end{align}
As to the negative part $V_-$ of $V$, we proved in Lemma 2.3 from \cite{IMT-bdd} that $\sqrt{V_-}f \in L^2(\Omega)$, provided that $V_-\in K_n(\Omega)$ and $f \in H^1_0(\Omega)$.
Then we define a linear topological space $\mathcal X_V(\Omega)$, 
its dual space $\mathcal X'_V(\Omega)$ 
and inhomogeneous Besov spaces $B^s_{p,q}(\mathH_V)$ 
in a similar way to definitions in 
\S \ref{sec:sec2}. 
Furthermore, 
if we assume the additional condition that 
\begin{gather}
\label{EQ:assV2}
\begin{cases} 
\displaystyle \sup _{x \in \Omega} 
   \int_{\Omega} \dfrac{V_- (y)}{|x-y|^{n-2}} \, dy 
 < \dfrac{\pi^{\frac{n}{2}}}{\Gamma \big(\frac{n}{2} -1\big)}
& \quad \text{if } n \geq 3,\\
V _- = 0
& \quad \text{if } n = 1,2, 
\end{cases}
\end{gather}
then we also define a 
linear topological space $\mathcal Z_V(\Omega)$, 
its dual space $\mathcal Z'_V(\Omega)$ 
and homogeneous Besov spaces $\dot{B}^s_{p,q}(\mathH_V)$ 
in a similar way to definitions in 
\S\ref{sec:sec2}.\\

We have proved the following result in \cite{IMT-Besov}.
\begin{prop}[Proposition 3.5 in \cite{IMT-Besov}]
\label{prop:equiv}
Let $\Omega$ be an open set of $\mathbb R^n$, and let $1 \le p,q\le \infty$ and 
$s$ be such that 
\begin{equation*}
\label{EQ:s}
\begin{cases}
\displaystyle
-\min
\left\{
2, n\left(1-\frac{1}{p}\right)
\right\} 
< s <
\min\left\{\frac{n}{p},2\right\}
\quad&\text{if }n\ge3,\\
\displaystyle
-2+\frac{2}{p}
<s<
\frac{2}{p}
&\text{if }n=1,2.
\end{cases}
\end{equation*}
Then the following assertions hold{\rm :} 
\begin{itemize}
\item[(i)] 
Suppose that the potential $V$ satisfies the assumption \eqref{EQ:assV1} and 
\begin{gather*}\label{EQ;V-I}
\begin{cases} 
V \in L^{\frac{n}{2},\infty} (\Omega) + L^\infty(\Omega) 
& \quad \text{if } n \geq 3,\\
V \in K_n(\Omega) 
& \quad \text{if } n = 1,2,
\end{cases}
\end{gather*}
where $L^{\frac{n}{2},\infty}(\Omega)$ is the Lorentz space. 
Then 
\begin{equation*}
\label{EQ:equi1}
B^s_{p,q}(\mathH_V)  \cong B^s_{p,q}(\mathH).
\end{equation*}
\item[(ii)] 
Let $n\ge2$. 
Suppose that the potential $V$ satisfies the assumption 
\eqref{EQ:assV2}  and 
\begin{gather*}\label{EQ;V-H}
\begin{cases}
V \in L^{\frac{n}{2},\infty} (\Omega)
& \quad \text{if } n \geq 3,\\
V \in L^1(\Omega)
& \quad \text{if } n = 2.
\end{cases}
\end{gather*}
Then 
\begin{equation*}
\label{EQ:equi2}
\dot{B}^s_{p,q}(\mathH_V)  \cong \dot{B}^s_{p,q}(\mathH).
\end{equation*}
\end{itemize}
\end{prop}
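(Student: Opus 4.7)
My plan is to reduce the equivalence of the Besov spaces generated by $\mathH_V$ and those generated by $\mathH$ to a quantitative comparison of their Littlewood--Paley projections. In a first stage I would verify that the ambient spaces of test functions and distributions agree: $\mathcal X_V(\Omega)=\mathcal X(\Omega)$ (and, under \eqref{EQ:assV2}, $\mathcal Z_V(\Omega)=\mathcal Z(\Omega)$) as topological vector spaces. The assumptions on $V$ guarantee that $e^{-t\mathH_V}$ admits Gaussian upper bounds of the same order as $e^{-t\mathH}$, so that the $L^p$--$L^q$ spectral multiplier bound of Proposition \ref{prop:Lp} carries over verbatim to $\mathH_V$. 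This yields the equivalence of the defining seminorms through a finite Littlewood--Paley decomposition, so that both candidate Besov spaces live naturally in the same distribution space.

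Once the ambient spaces coincide, the remaining task is to establish $\|f\|_{B^s_{p,q}(\mathH_V)}\asymp\|f\|_{B^s_{p,q}(\mathH)}$ and, under \eqref{EQ:assV2}, the corresponding homogeneous equivalence. I would insert a Calder\'on-type reproducing formula for $\mathH$ inside $\phi_j(\sqrt{\mathH_V})$ to write
\[
\phi_j(\sqrt{\mathH_V})f=\sum_{k\in\mathbb Z}\phi_j(\sqrt{\mathH_V})\,\widetilde\phi_k(\sqrt{\mathH})\,\phi_k(\sqrt{\mathH})f,
\]
where $\widetilde\phi_k:=\phi_{k-1}+\phi_k+\phi_{k+1}$, and then establish an off-diagonal bound
\[
\bigl\|\phi_j(\sqrt{\mathH_V})\,\widetilde\phi_k(\sqrt{\mathH})\bigr\|_{L^p(\Omega)\to L^p(\Omega)}\le C\,2^{-N|j-k|}
\]
for a sufficiently large $N$. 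The decay is generated by iterating the resolvent identity $\mathH_V^{-1}-\mathH^{-1}=-\mathH^{-1}V\mathH_V^{-1}$ between the two spectral projectors; each insertion gains a factor of $2^{-2|j-k|}$ at the cost of a factor of $V$, whose $L^p$-boundedness is controlled by $V\in L^{n/2,\infty}(\Omega)+L^\infty(\Omega)$ or $V\in K_n(\Omega)$ together with Proposition \ref{prop:Lp} applied to neighbouring Littlewood--Paley pieces. Plugging this decay into the double sum and applying a discrete Young inequality on $\ell^q(\mathbb Z)$ with weights $\{2^{sj}\}$ yields the required norm equivalence for $0\le s<\min\{n/p,2\}$; the negative-$s$ range is then obtained by duality.

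The main obstacle is producing the off-diagonal bound with $N>|s|$. A single application of the resolvent identity only produces a decay of $2^{-2|j-k|}$, which is precisely why the method caps out at $s<2$; pushing beyond would require either a finer multiplication theorem for $V$ in Triebel--Lizorkin scales or a wave-equation representation of $\phi_j(\sqrt{\mathH_V})$ with finite speed of propagation, and neither is accessible under the mere Kato-class/Lorentz hypotheses on $V$. The companion threshold $n/p$ enters because $V\in L^{n/2,\infty}$ acts as multiplication $B^s_{p,q}\to B^{s-2}_{p,q}$ only when Sobolev embedding allows $V\cdot f\in L^p$. The homogeneous case (ii) further requires that $\mathH_V$ have no zero eigenvalue, which is exactly guaranteed by the strict inequality in \eqref{EQ:assV2}; once that is in hand, the whole program runs identically on $\dot B^s_{p,q}$.
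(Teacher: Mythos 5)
The proposition you are addressing is not proved in this paper: its header reads ``[Proposition 3.5 in \cite{IMT-Besov}]'' and the result is simply imported from the companion paper, so there is no proof here to compare against. That said, your plan --- comparing the Littlewood--Paley pieces for $\mathH_V$ and $\mathH$ through off-diagonal decay generated by the resolvent identity --- is the right general idea, but several steps would not go through as written.

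You take for granted that $\mathcal X_V(\Omega)=\mathcal X(\Omega)$ and $\mathcal Z_V(\Omega)=\mathcal Z(\Omega)$ ``as topological vector spaces,'' but these test-function spaces are built from $\mathcal D(\mathH_V^M)$ and $\mathcal D(\mathH^M)$, which are not a priori comparable under merely Kato-class or Lorentz hypotheses on $V$; some argument (a common dense subspace together with equivalence of seminorms, or a direct isomorphism of the dual distribution spaces) has to be supplied. The proposed off-diagonal bound $\|\phi_j(\sqrt{\mathH_V})\,\widetilde\phi_k(\sqrt{\mathH})\|_{L^p\to L^p}\le C\,2^{-2|j-k|}$ also does not follow from ``iterating the resolvent identity between the two spectral projectors'': $\phi_j(\sqrt{\cdot})$ is not a rational function, so one must first pass to a Cauchy-integral or Helffer--Sj\"ostrand representation to bring the resolvents into play, and the rate extracted depends both on the sign of $j-k$ and on a quantitative $L^p$-bound for multiplication by $V$ composed with spectral cutoffs. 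That last bound is exactly the source of the thresholds $s<\min\{n/p,2\}$ and $s>-\min\{2,n(1-1/p)\}$, and a single sentence invoking ``Sobolev embedding allows $V\cdot f\in L^p$'' does not deliver it. Finally, your reduction of the range $s<0$ to $s>0$ by duality, while consistent with the swap $p\leftrightarrow p'$ visible in the asymmetric thresholds, relies on a duality theorem $(B^s_{p,q}(\mathH))^*\cong B^{-s}_{p',q'}(\mathH)$ that itself needs proof in this operator-adapted setting, and in any case excludes the endpoints $p,q\in\{1,\infty\}$, which would then require a separate argument. As an outline your approach is plausible, but these are genuine gaps, not routine details.
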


As an immediate consequence of Theorem \ref{thm:bilinear} and Proposition \ref{prop:equiv}, 
we have the following.

\begin{thm}
\label{thm:bilinearV}
Let $p,p_1,p_2,p_3,p_4$ and $q$ be such that
\[
1 \le p, p_1,p_2,p_3,p_4,q\le \infty 
\quad \text{and}\quad 
\frac{1}{p}=\frac{1}{p_1}+\frac{1}{p_2}=\frac{1}{p_3}+\frac{1}{p_4},
\]
and let $s$ be such that
\[ 
0<s <\min\left\{\frac{n}{p_1},\frac{n}{p_4},2\right\}\quad \text{if }n\ge3;
\quad 
0<s< \min\left\{\frac{2}{p_1},\frac{2}{p_4}\right\} \quad\text{if }n=1,2.
\]
Then, 
under the same assumption on $V$ 
 in Proposition \ref{prop:equiv}, 
the assertions {\rm (i)} and {\rm (ii)} in Theorem \ref{thm:bilinear} hold for $B^s_{p,q}(\mathH_V)$ and $\dot{B}^s_{p,q}(\mathH_V)$, respectively. 
\end{thm}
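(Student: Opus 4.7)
The plan is to reduce Theorem \ref{thm:bilinearV} to Theorem \ref{thm:bilinear} by means of the norm equivalences provided by Proposition \ref{prop:equiv}. First I would verify that, under the restrictions imposed on $s$ in the statement, every Besov space appearing in the desired inequality falls within the range where Proposition \ref{prop:equiv} yields $B^s_{p',q}(\mathH_V)\cong B^s_{p',q}(\mathH)$ (respectively $\dot{B}^s_{p',q}(\mathH_V)\cong \dot{B}^s_{p',q}(\mathH)$). Concretely, for $n\ge 3$ the upper bound $0<s<\min\{n/p_1,n/p_4,2\}$ ensures the equivalence at indices $p_1$ and $p_4$, while the relation $1/p=1/p_1+1/p_2=1/p_3+1/p_4$ forces $p\le p_1$ and $p\le p_4$, so $n/p\ge\max\{n/p_1,n/p_4\}\ge s$, giving the equivalence at index $p$ as well; the case $n=1,2$ is analogous. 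The positivity $s>0$ automatically takes care of the lower bound $-\min\{2,n(1-1/p)\}<s$ (or its $n=1,2$ counterpart) in Proposition \ref{prop:equiv}.

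Once these equivalences are in hand, the argument is a short three-step chain. Given $f,g$ in the relevant intersection of $\mathH_V$-Besov spaces and Lebesgue spaces, I pass to equivalent norms with $\mathH$ via Proposition \ref{prop:equiv}; then I invoke Theorem \ref{thm:bilinear}(i) in the inhomogeneous case, or Theorem \ref{thm:bilinear}(ii) in the homogeneous case, which bounds $\|fg\|_{B^s_{p,q}(\mathH)}$ (resp.\ $\|fg\|_{\dot{B}^s_{p,q}(\mathH)}$) by the required combination of $B^s$-norms and $L^p$-norms of $f$ and $g$; finally, I apply Proposition \ref{prop:equiv} once more to translate the estimate on $fg$ back into an estimate in $B^s_{p,q}(\mathH_V)$ (resp.\ $\dot{B}^s_{p,q}(\mathH_V)$). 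The $L^{p_2}(\Omega)$ and $L^{p_3}(\Omega)$ norms on the right-hand side are untouched, since they depend only on $\Omega$ and not on the choice of $\mathH$ or $\mathH_V$; similarly, the gradient estimate \eqref{EQ:grad-infty} and the assumptions on $V$ in the two parts of Proposition \ref{prop:equiv} are imported verbatim.

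Since this is essentially a transport of the main theorem through a known isomorphism, I do not anticipate any substantive analytic obstacle. The only point demanding care is the bookkeeping of the range of $s$, namely the verification that the upper bound $\min\{n/p_1,n/p_4,2\}$ (or its $n=1,2$ analogue) is exactly what Proposition \ref{prop:equiv} requires \emph{simultaneously} at all indices $p$, $p_1$, $p_4$ for which an $\mathH_V$-Besov norm appears in \eqref{EQ:bilinear1} or \eqref{EQ:bilinear2}. No new harmonic-analytic work on the potential $V$ is needed beyond what is already encoded in Proposition \ref{prop:equiv}.
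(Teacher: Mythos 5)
Your proposal is correct and takes essentially the same route as the paper, which states Theorem~\ref{thm:bilinearV} as an ``immediate consequence of Theorem~\ref{thm:bilinear} and Proposition~\ref{prop:equiv}'' without spelling out the details. The bookkeeping you carry out on the range of $s$ --- using $1/p=1/p_1+1/p_2=1/p_3+1/p_4$ to get $p\le p_1$ and $p\le p_4$, hence $s<\min\{n/p_1,n/p_4,2\}\le n/p$, and noting that $s>0$ automatically clears the lower bound $-\min\{2,n(1-1/r)\}$ (or $-2+2/r$ when $n=1,2$) for $r=p,p_1,p_4$ --- is exactly the justification the paper leaves implicit.
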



\appendix
 \section{(High regularity case)
} 
 \label{App:AppendixA}
In this  appendix 
we check that the bilinear estimates do not necessarily hold for some $s\ge2$. 
Let us consider the bilinear estimate 
\eqref{EQ:bilinear2} in the case when 
\[
p=1,\quad p_1=p_2=p_3=p_4=q=2\quad\text{and}\quad f=g,
\]
namely, 
\begin{equation}
\label{EQ:counter1}
\|f^2\|_{\dot{B}^s_{1,2}(\mathH)}
\le C
\|f\|_{\dot{B}^s_{2,2}(\mathH)} \|f\|_{L^2(\Omega)}
\end{equation}
for any 
$f\in \dot{B}^s_{2,2}(\mathH)\cap L^2(\Omega)$. 
We note that 
the estimate \eqref{EQ:counter1} is already proved 
for any $0<s<2$ on an arbitrary open set 
(see the case (ii) in \S\ref{sec:sec2}). 
We shall show that the estimate \eqref{EQ:counter1} 
does not hold for some $s \ge 2$.\\

Let $n \ge 3$ and $\Omega$ be an exterior domain in $\mathbb R^n$ such that $\R^n\setminus \Omega$ 
is compact and its boundary is of $C^{1,1}$. 
Then 
it is known that 
\begin{equation}
\label{EQ:counter3}
\|\nabla e^{-t\mathH}\|_{L^1(\Omega)\to L^\infty(\Omega)}
\ge C t^{-\frac{n}{2}}
\end{equation}
for sufficiently large $t>0$ 
(see Ishige and Kabeya \cite{IshKab-2007}, 
and also Zhang \cite{Zhang-2003}). 
However 
we can claim the following:\\

\noindent{\bf Claim A.1.} 
Let $\varepsilon>0$. 
If the estimate \eqref{EQ:counter1} holds 
for any $s\in[2,n+2+\varepsilon]$, then 
there exists a constant $C>0$ such that
\begin{equation}
\label{EQ:counter2}
\|\nabla e^{-t\mathH}\|_{L^1(\Omega)\to L^\infty(\Omega)}
\le C 
t^{-\frac{n}{2}-\frac{1}{2}+\frac{\varepsilon}{4}}
\end{equation}
for sufficiently large $t>0$. \\

The estimate \eqref{EQ:counter2} 
contradicts \eqref{EQ:counter3} if we choose 
$\varepsilon$ sufficiently small.
Thus, if Claim A.1 is proved, then 
we conclude that when $\Omega$ is the exterior domain whose boundary is compact and of 
$C^{1,1}$,
the bilinear estimate \eqref{EQ:bilinear2} does not hold for some $s\ge2$. \\

From now on, we prove Claim A.1. Let $f\in L^1(\Omega)$. 
By the Leibniz rule, we have 
\[
\mathH (e^{-t\mathH} f)^2
=
2 (\mathH e^{-t\mathH} f)(e^{-t\mathH} f)
-2|\nabla e^{-t\mathH} f|^2 
\quad \text{in }\mathscr D'(\Omega), 
\]
and hence,
\begin{equation}
\label{EQ:B-1}
\begin{split}
\|\nabla e^{-t\mathH} f\|^2_{L^\infty(\Omega)}
&\le \|\mathH (e^{-t\mathH} f)^2
\|_{L^\infty(\Omega)}
+
\|(\mathH e^{-t\mathH} f)(e^{-t\mathH} f)\|_{L^\infty(\Omega)}
\\
&=:
I + II.
\end{split}
\end{equation}
We readily see from Proposition \ref{prop:Lp} for $p=1$ and $q=\infty$ that 
\begin{equation}
\label{EQ:B-2}
\begin{split}
II
& \le 
\|\mathH e^{-t\mathH} f\|_{L^\infty(\Omega)}
\|e^{-t\mathH} f\|_{L^\infty(\Omega)}\\
& \le  
Ct^{-\frac{n}{2}-1}\|f\|_{L^1(\Omega)}\cdot t^{-\frac{n}{2}}\|f\|_{L^1(\Omega)}\\
&= 
Ct^{-n-1}\|f\|^2_{L^1(\Omega)}.
\end{split}
\end{equation}
As to the estimate for $I$, we recall that 
\begin{equation}\label{EQ:PHI}
\phi_j=\Phi_j\phi_j,
\end{equation}
where
\[
\Phi_j=\phi_{j-1}+\phi_j+\phi_{j+1}.
\]
Then, by using identities \eqref{EQ:PHI} and the part (ii) from 
Lemma \ref{lem:Lp} for $p=\infty$ and $\alpha=1$, 
we find that 
\begin{equation*}
\begin{split}
I
& \le  
\sum_{j\in\mathbb Z}\|\phi_j(\sqrt{\mathH})\mathH (e^{-t\mathH} f)^2\|_{L^\infty(\Omega)}\\
& =  
\sum_{j\in\mathbb Z}\|\mathH \Phi_j(\sqrt{\mathH})  \phi_j(\sqrt{\mathH}) (e^{-t\mathH} f)^2\|_{L^\infty(\Omega)}\\
& \le  
C\sum_{j\in\mathbb Z}2^{2j}\| \phi_j(\sqrt{\mathH}) (e^{-t\mathH} f)^2\|_{L^\infty(\Omega)}.
\end{split}
\end{equation*}
Here, by using \eqref{EQ:PHI} and 
Proposition \ref{prop:Lp} for $p=1$, $q=\infty$ and $\theta=2^{-2j}$, we estimate 
\begin{equation*}
\begin{split}
\| \phi_j(\sqrt{\mathH}) (e^{-t\mathH} f)^2\|_{L^\infty(\Omega)}
& =  
\| \Phi_j(\sqrt{\mathH})  \phi_j(\sqrt{\mathH}) (e^{-t\mathH} f)^2\|_{L^\infty(\Omega)}\\
& \le 
C2^{nj}\|\phi_j(\sqrt{\mathH})(e^{-t\mathH} f)^2\|_{L^1(\Omega)}.
\end{split}
\end{equation*}
Hence, combining these estimates obtained now, 
we get
\begin{equation*}
\begin{split}
I
& \le
C\sum_{j\in\mathbb Z}2^{(n+2)j}\|\phi_j(\sqrt{\mathH})(e^{-t\mathH} f)^2\|_{L^1(\Omega)}\\
& =: C(I_1+I_2), 
\end{split}
\end{equation*}
where 
\[
I_1=\sum_{j\le 0}2^{(n+2)j}\|\phi_j(\sqrt{\mathH})(e^{-t\mathH} f)^2\|_{L^1(\Omega)},
\]
\[
I_2=\sum_{j\ge 1}2^{(n+2)j}\|\phi_j(\sqrt{\mathH})(e^{-t\mathH} f)^2\|_{L^1(\Omega)}.
\]
Here, writing
\[
I_1=\sum_{j\le 0}2^{\varepsilon j}\cdot2^{-\varepsilon j}\cdot2^{(n+2)j}\|\phi_j(\sqrt{\mathH})(e^{-t\mathH} f)^2\|_{L^1(\Omega)},
\]
\[
I_2=\sum_{j\ge 1}2^{-\varepsilon j}\cdot
2^{\varepsilon j}\cdot2^{(n+2)j}\|\phi_j(\sqrt{\mathH})(e^{-t\mathH} f)^2\|_{L^1(\Omega)}
\]
for any $\ep>0$, 
we estimate
\begin{equation*}
\begin{split}
I_1 & \le
\Big\{\sum_{j\le 0}2^{2\varepsilon j}
\Big\}^{\frac12}
\Big\{\sum_{j\le 0}\big(2^{(n+2-\varepsilon)j}\|\phi_j(\sqrt{\mathH})(e^{-t\mathH} f)^2
\|_{L^1(\Omega)}\big)^2\Big\}^{\frac12}\\
& \le
C\|(e^{-t\mathH} f)^2
\|_{\dot{B}^{n+2-\varepsilon}_{1,2}(\mathH)},
\end{split}
\end{equation*}
\begin{equation*}
\begin{split}
I_2 &\le 
\Big\{\sum_{j\ge 1}2^{-2\varepsilon j}\Big\}^{\frac{1}{2}}
\Big\{\sum_{j\ge 1}\big(2^{(n+2+\varepsilon)j}\|\phi_j(\sqrt{\mathH})(e^{-t\mathH} f)^2\|_{L^1(\Omega)}\big)^2\Big\}^{\frac{1}{2}}\\
& \le 
C\|(e^{-t\mathH} f)^2\|_{\dot{B}^{n+2+\varepsilon}_{1,2}(\mathH)},
\end{split}
\end{equation*}
respectively, which imply that 
\begin{equation}\label{EQ:THE1}
\begin{split}
I \le 
C\left\{\|(e^{-t\mathH} f)^2\|_{\dot{B}^{n+2-\varepsilon}_{1,2}(\mathH)}
+
\|(e^{-t\mathH} f)^2\|_{\dot{B}^{n+2+\varepsilon}_{1,2}(\mathH)}
\right\}
\end{split}
\end{equation}
for any $\varepsilon > 0$. 
Now, since $f\in L^1(\Omega)$, it follows from
$L^1$-$L^2$-estimate for heat semigroup $e^{-t\mathH}$ that 
\[
e^{-t\mathH} f \in \dot{B}^s_{2,2}(\mathH)\cap L^2(\Omega) 
\quad \text{for any $s\ge0$ and 
$t>0$.}
\] 
Hence, applying the assumption that \eqref{EQ:counter1} holds for any 
$s\in[2,n+2+\varepsilon]$, we deduce that
\begin{equation}\label{EQ:exp1}
\|(e^{-t\mathH} f)^2\|_{\dot{B}^{n+2-\varepsilon}_{1,2}(\mathH)}
\le C
\|e^{-t\mathH} f\|_{\dot{B}^{n+2-\varepsilon}_{2,2}(\mathH)}
\|e^{-t\mathH} f\|_{L^2(\Omega)}.
\end{equation}
Since
\[
\|g\|_{\dot{B}^{s}_{2,2}(\mathH)} \simeq \|\mathH^{\frac{s}{2}} g\|_{L^2(\Omega)},\quad g \in \dot{B}^{s}_{2,2}(\mathH)
\]
for any $s\in \R$,
the first factor in the right member of 
\eqref{EQ:exp1} is estimated as 
\begin{equation*}
\begin{split}
\|e^{-t\mathH} f
\|_{\dot{B}^{n+2-\varepsilon}_{2,2}(\mathH)}
&\le C
\|\mathH^{\frac{n}{2}+1-\frac{\varepsilon}{2}} e^{-t\mathH} f\|_{L^2(\Omega)}\\
&\le 
C
t^{-\frac{n}{2}-1+\frac{\varepsilon}{2}}
\|e^{-\frac{t}{2}\mathH} f\|_{L^2(\Omega)}\\
&\le 
C
t^{-\frac{3n}{4}-1+\frac{\varepsilon}{2}}
\|f\|_{L^1(\Omega)},
\end{split}
\end{equation*}
where we used Proposition \ref{prop:Lp} in the second step, and $L^1$-$L^2$-estimate for heat semigroup $e^{-\frac{t}{2}\mathH}$ in the last step. 
Again, by $L^1$-$L^2$-estimate for heat semigroup $e^{-\frac{t}{2}\mathH}$, we have
\[
\|e^{-t\mathH} f\|_{L^2(\Omega)}
\le C t^{-\frac{n}{4}}
\|f\|_{L^1(\Omega)}.
\]
Hence, combining all the estimates obtained now, we get
\begin{equation}\label{EQ:THE2}
\|(e^{-t\mathH} f)^2\|_{\dot{B}^{n+2-\varepsilon}_{1,2}(\mathH)}
\le C
t^{-n-1+\frac{\varepsilon}{2}}
\|f\|^2_{L^1(\Omega)}.
\end{equation}
In a similar way, we have 
\begin{equation}\label{EQ:THE3}
\|(e^{-t\mathH} f)^2\|_{\dot{B}^{n+2+\varepsilon}_{1,2}(\mathH)}
\le C
t^{-n-1-\frac{\varepsilon}{2}}
\|f\|^2_{L^1(\Omega)}.
\end{equation}
Therefore, combining 
the estimates \eqref{EQ:THE1}, \eqref{EQ:THE2} 
and \eqref{EQ:THE3}, we conclude that
\begin{equation}
\label{EQ:B-3}
\begin{split}
I
\le C
\left(
t^{-n-1+\frac{\varepsilon}{2}}
+
t^{-n-1-\frac{\varepsilon}{2}}
\right)
\|f\|^2_{L^1(\Omega)}.
\end{split}
\end{equation}
Thus, combining \eqref{EQ:B-1}, \eqref{EQ:B-2} 
and \eqref{EQ:B-3}, we arrive at \eqref{EQ:counter2}. 
Claim A.1 is proved.


\section{} 
\label{App:AppendixB}
In this appendix we prove the following. 
\begin{lem}\label{lem:norm}
Let $1 \le p \le \infty$ and $T$ be a bounded linear operator from $L^p(\Omega)$ to $L^\infty(\Omega)$, and $T(x,y)$ the kernel of $T$. Then
\begin{equation*} 
\|T\|_{L^p(\Omega)\to L^\infty(\Omega)} 
=
\|T(\cdot,\cdot)\|_{L^\infty(\Omega; L^{p'}(\Omega))},
\end{equation*}
where $p'$ is the conjugate exponent of $p$. 
\end{lem}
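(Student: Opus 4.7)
The plan is to establish the claimed identity by proving the two inequalities separately. For the $\le$ direction, the approach is a pointwise application of H\"older's inequality: for any $f\in L^p(\Omega)$ and almost every $x\in\Omega$,
\[
|(Tf)(x)| = \Big|\int_\Omega T(x,y)f(y)\,dy\Big| \le \|T(x,\cdot)\|_{L^{p'}(\Omega)}\|f\|_{L^p(\Omega)}.
\]
Taking the essential supremum in $x$ and then the supremum over $\|f\|_{L^p}\le 1$ yields $\|T\|_{L^p(\Omega)\to L^\infty(\Omega)} \le \|T(\cdot,\cdot)\|_{L^\infty(\Omega;L^{p'}(\Omega))}$. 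This step is completely routine; the only thing to note is that the pointwise integral representation of $Tf$ in terms of the kernel $T(x,y)$ is valid for a.e.~$x$, which is built into the notion of ``kernel of $T$''.

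For the $\ge$ direction the strategy is a duality-plus-density argument. I want to show that for a.e.~$x$,
\[
\|T(x,\cdot)\|_{L^{p'}(\Omega)} \le \|T\|_{L^p(\Omega)\to L^\infty(\Omega)}.
\]
Assume first $1\le p<\infty$, so that $L^p(\Omega)$ is separable (since $\Omega\subset\mathbb R^n$ is $\sigma$-finite). Pick a countable dense subset $\{f_n\}$ of the closed unit ball of $L^p(\Omega)$. For each $n$, $\|Tf_n\|_{L^\infty(\Omega)}\le \|T\|_{L^p\to L^\infty}$, so there is a null set $N_n\subset\Omega$ with $|(Tf_n)(x)|\le \|T\|_{L^p\to L^\infty}$ for $x\in\Omega\setminus N_n$. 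Setting $N:=\bigcup_n N_n$, which is still null, and using the kernel representation, for every $x\in\Omega\setminus N$ and every $n$,
\[
\Big|\int_\Omega T(x,y)f_n(y)\,dy\Big| = |(Tf_n)(x)| \le \|T\|_{L^p(\Omega)\to L^\infty(\Omega)}.
\]
By the density of $\{f_n\}$ and the duality characterization $\|T(x,\cdot)\|_{L^{p'}}=\sup\bigl\{|\langle T(x,\cdot),f\rangle|:\|f\|_{L^p}\le 1\bigr\}$, this passes to the supremum and gives the desired bound on a full-measure set of $x$'s, hence the essential-supremum estimate.

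The main obstacle is the endpoint $p=\infty$, $p'=1$: then $L^\infty(\Omega)$ is not separable, and one cannot just take a countable dense subset of its unit ball. My plan to handle it is to bypass this by a direct argument: for any bounded measurable $g$ on $\Omega$ with $\|g\|_{L^\infty}\le 1$, choose $g(y)=\overline{\mathrm{sign}\,T(x,y)}\,\chi_K(y)$ on an exhausting sequence of bounded measurable sets $K$, and use monotone convergence to recover $\int_\Omega |T(x,y)|\,dy$. The subtlety is that this test function depends on $x$, so one must instead take a countable generating family of bounded measurable sets (e.g.\ dyadic cubes) and work with the corresponding countable collection of $\pm 1$-valued simple functions to control the null exceptional set, as in the $p<\infty$ argument. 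Modulo this measure-theoretic bookkeeping, the endpoint follows along the same line, and combining both inequalities completes the proof.
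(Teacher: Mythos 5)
Your overall plan matches the paper's: the $\le$ direction is the routine H\"older step, and the $\ge$ direction is proved via the duality $\|T(x,\cdot)\|_{L^{p'}}=\sup\{|\int_\Omega T(x,y)f(y)\,dy| : \|f\|_{L^p}\le 1\}$, specialising $f$ to a test function that (nearly) realises the supremum and using $|Tf(x)|\le\|Tf\|_{L^\infty}\le\|T\|_{L^p\to L^\infty}\|f\|_{L^p}$. Where you differ from the paper is in the handling of the $f$-dependent exceptional null sets, and in particular the endpoint $p=\infty$. For $1\le p<\infty$ the paper simply takes the supremum over the whole unit ball of $L^p$ and writes the bound ``for any $x\in\Omega$'', glossing over the point you identify; your countable-dense-subset reduction is exactly the bookkeeping needed to make this honest, and it does use separability, which holds for $p<\infty$. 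For $p=\infty$ the paper does \emph{not} do a countable reduction: fixing $x_0$, it sets $g(y)=e^{-i\arg T(x_0,y)}$, notes $\int_\Omega|T(x_0,y)|\,dy=\int_\Omega T(x_0,y)\,g(y)\,dy$, trivially bounds a single value by $\sup_{x\in\Omega}\big|\int_\Omega T(x,y)\,g(y)\,dy\big|=\sup_{x}|Tg(x)|$, and then invokes $\|Tg\|_{L^\infty}\le\|T\|_{L^\infty\to L^\infty}$. Passing to the supremum over all $x$ \emph{before} applying the operator bound is the paper's way of decoupling the $x_0$-dependence of the test function from the evaluation point, at the cost of the usual $\sup$-versus-$\esssup$ informality; your plan through countable families of sign-valued simple functions on dyadic cubes is more laborious but makes the same point rigorous without that shortcut. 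So: correct, same underlying idea, with you doing (or at least proposing to do) the measure-theoretic cleanup in both regimes, and the paper using a slicker one-line trick at $p=\infty$.
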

\begin{proof}
We have: 
\begin{equation}
\label{EQ:1st}
\|T\|_{L^p(\Omega)\to L^\infty(\Omega)} 
\le \|T(\cdot,\cdot)\|_{L^\infty(\Omega; L^{p'}(\Omega))} 
\end{equation}
for any $1 \le p \le \infty$. 
In fact, let $f \in L^p(\Omega)$. Then it follows from H\"older's inequality that
\begin{equation*}
\begin{split}
|Tf(x)| &=
\Big|\int_\Omega T(x,y)f(y)\,dy\Big|\\
&\le 
\|T(x,\cdot)\|_{L^{p'}(\Omega)} \|f\|_{L^p(\Omega)}
\end{split}
\end{equation*}
for a.e.\,$x\in \Omega$. Hence we obtain
\[
\|Tf\|_{L^\infty(\Omega)}
\le \|T(\cdot,\cdot)\|_{L^\infty(\Omega; L^{p'}(\Omega))} \|f\|_{L^p(\Omega)},
\]
which implies \eqref{EQ:1st}. 
Therefore it suffices to prove the converse:
\begin{equation}
\label{EQ:2nd}
\|T(\cdot,\cdot)\|_{L^\infty(\Omega; L^{p'}(\Omega))} 
\le \|T\|_{L^p(\Omega)\to L^\infty(\Omega)} 
\end{equation}
for any $1 \le p \le \infty$.
When $1 \le p < \infty$, 
we estimate 
\begin{equation*}
\begin{split}
\|T(x,\cdot)\|_{L^{p'}(\Omega)}
&=
\sup_{f\in L^p(\Omega),\,\|f\|_{L^p(\Omega)}=1}
\Big|\int_\Omega T(x,y)f(y)\,dy\Big|\\
&= 
\sup_{f\in L^p(\Omega),\,\|f\|_{L^p(\Omega)}=1}
\big|T f(x)\big|\\
& \le
\sup_{f\in L^p(\Omega),\,\|f\|_{L^p(\Omega)}=1}
\|T\|_{L^p(\Omega)\to L^\infty(\Omega)} \|f\|_{L^p(\Omega)}\\
&\le
\|T\|_{L^p(\Omega)\to L^\infty(\Omega)}
\end{split}
\end{equation*}
for any $x \in \Omega$. 
This proves \eqref{EQ:2nd} for $1\le p<\infty$.
When $p=\infty$, 
fixing $x_0 \in \Omega$, we estimate
\begin{equation*}
\begin{split}
\|T(x_0,\cdot)
\|_{L^1(\Omega)} &= 
\int_{\Omega} |T(x_0,y)|\,dy\\
&=
\int_{\Omega} T(x_0,y)
e^{-i\arg{\{T(x_0,y)\}}}\,dy\\
& \le 
\sup_{x\in\Omega}
\Big|
\int_{\Omega} T(x,y)
e^{-i\arg{\{T(x_0,y)\}}}\,dy\Big|\\
&=
\sup_{x\in\Omega}
\big|T e^{-i\arg{\{T(x_0,\cdot)\}}}(x)\big|\\
&\le
\|T\|_{L^\infty(\Omega)\to L^\infty(\Omega)}
\big\|e^{-i\arg{\{T(x_0,\cdot)\}}} \big\|_{L^\infty(\Omega)}\\
&= \|T\|_{L^\infty(\Omega)\to L^\infty(\Omega)},
\end{split}
\end{equation*}
which proves \eqref{EQ:2nd} for $p=\infty$.
The proof of Lemma \ref{lem:norm} is finished.
\end{proof}



\begin{thebibliography}{99}


\bibitem{Bo-1981} J.-M. Bony, 
	 \newblock{{\it Calcul symbolique et propagation des singularit\'es pour les
   \'equations aux d\'eriv\'ees partielles non lin\'eaires}},	  
	 \newblock{Ann. Sci. \'Ecole Norm. Sup. (4)} 
	 {\bf  14} (1981), no. 2, 209--246.
 
\bibitem{CD-1999} T. Coulhon and X. T. Duong, 
	 \newblock{{\it Riesz transforms for $1\leq p\leq 2$}},	  
	 \newblock{Trans. Amer. Math. Soc.} 
	 {\bf  351} (1999), no. 3, 1151--1169.
	 	 
\bibitem{Dah-1979} B. E. J. Dahlberg,
	 \newblock{{\it $L^{q}$-estimates for Green potentials in Lipschitz domains}},
	 \newblock{Math. Scand.} 
	 {\bf 44} (1979), no. 1, 149--170.
	 
 	 	 
\bibitem{FMP-2004} S. Fornaro, G. Metafune and E. Priola,
	 \newblock{{\it Gradient estimates for Dirichlet parabolic problems in unbounded
   domains}}, 
	 \newblock{J. Differential Equations} {\bf 205} (2004), no. 2, 329--353. 
	 
\bibitem{GeoTan-pre} V. Georgiev and K. Taniguchi,
	 \newblock{{\it Gradient estimates and their optimality for heat equation in an exterior domain}}, 
	 \newblock{arXiv:1710.00592} 
	 (2017).
	 
\bibitem{Grafakos_2014} L. Grafakos,
	 \newblock{Modern Fourier Analysis}, 3rd ed., 
	 \newblock{Graduate Texts in Mathematics}, Vol. 249, 
	 \newblock{Springer, New York}, 2014. 
	 

\bibitem{IshKab-2007} K. Ishige and Y. Kabeya,
	 \newblock{{\it Decay rates of the derivatives of the solutions of the heat
   equations in the exterior domain of a ball}},
	 \newblock{J. Math. Soc. Japan} 
	 {\bf 59} (2007), no. 3, 861--898.

\bibitem{T-2018} T. Iwabuchi,
	 \newblock{{\it Derivatives on function spaces generated by 
      the Dirichlet Laplacian and the Neumann Laplacian 
in one dimension}},
	 \newblock{Comm. Math. Anal.} 
	 {\bf 21} (2018), no. 1, 1--8. 

\bibitem{IMT-Besov} T. Iwabuchi, T. Matsuyama and K. Taniguchi,
	 \newblock{{\it Besov spaces on open sets}},
	 \newblock{arXiv:1603.01334} 
	 (2016).
	 
\bibitem{IMT-bdd} T. Iwabuchi, T. Matsuyama and K. Taniguchi,
	 \newblock{{\it Boundedness of spectral multipliers for Schr\"odinger operators on open sets}},
	 \newblock{to appear in Rev. Mat. Iberoam}.


\bibitem{JK-1995} D. Jerison and C. E. Kenig,
	 \newblock{{\it The inhomogeneous Dirichlet problem in Lipschitz domains}},
	 \newblock{J. Functional Analysis} 
	 {\bf 130} (1995), no. 1, 161--219.
	 
\bibitem{Ouh_2005} E. M. Ouhabaz,
	 \newblock{Analysis of heat equations on domains},
	 \newblock{London Mathematical Society Monographs Series, Vol. 31},
	 \newblock{Princeton University Press, Princeton, NJ}, 2005.
 
\bibitem{Saw-2016} Y. Sawano,
	 \newblock{{\it An observation of the subspace of $\mathcal S'$}},
	 Generalized functions and Fourier analysis, 185--192, Oper. Theory Adv. Appl., 260, Adv. Partial Differ. Equ. (Basel), Birkh\"auser/Springer, Cham, 2017.
	 
\bibitem{Sch_1971} H. H. Schaefer,
	 \newblock{Topological Vector Spaces},
	 \newblock{
	 Graduate Texts in Mathematics, Vol. 3}, 
	 \newblock{Springer-Verlag, New York-Berlin}, 1971.

\bibitem{Shen-1995} Z. Shen,
	 \newblock{{\it $L^p$ estimates for Schr\"odinger operators with certain
   potentials}},
	 \newblock{Ann. Inst. Fourier (Grenoble)} 
	 {\bf 45} (1995), no. 2, 513--546.

\bibitem{Shen-2005} Z. Shen,
	 \newblock{{\it Bounds of Riesz transforms on $L^p$ spaces for second order
   elliptic operators}},
	 \newblock{Ann. Inst. Fourier (Grenoble)} 
	 {\bf 55} (2005), no. 1, 173--197.

	 
\bibitem{T-pre} K. Taniguchi,
	 \newblock{{\it Besov spaces generated by the Neumann Laplacian}},
	 \newblock{to appear in European J. Math}.

\bibitem{Tre_1967} F. Tr\`eves,
	 \newblock{Topological Vector Spaces, Distributions and Kernels},
	 \newblock{
	 Graduate Texts in Mathematics, Vol. 3}, 
	 \newblock{Academic Press, New York-London}, 1967.
 	 

\bibitem{Wood-2007} I. Wood,
	 \newblock{{\it Maximal $L^p$-regularity for the Laplacian on Lipschitz
   domains}},
	 \newblock{Math. Z.} 
	 {\bf 255} (2007), no. 4, 855--875.

\bibitem{Zhang-2003} Q. S. Zhang,
	 \newblock{{\it The global behavior of heat kernels in exterior domains}},
	 \newblock{J. Functional Analysis} 
	 {\bf 200} (2003), no. 1, 160--176.

\end{thebibliography}
\end{document}